\newtheorem{theorem}{Theorem}[section]
\newtheorem{corollary}[theorem]{Corollary}
\newtheorem{definition}[theorem]{Definition}
\newtheorem{lemma}[theorem]{Lemma}
\newtheorem{proposition}[theorem]{Proposition}
\newtheorem{remark}[theorem]{Remark}
\def\@makefnmark{}
\begin{document}

\title[Quantization property of n-Laplacian mean field equation]{Quantization property of n-Laplacian mean field equation and sharp Moser-Onofri inequality}

\author{Lu Chen$^{*}$}
\address[Lu Chen]{Key Laboratory of Algebraic Lie Theory and Analysis of Ministry of Education, School of Mathematics and Statistics, Beijing Institute of Technology, Beijing
100081, PR China}
\email{chenlu5818804@163.com}

\author{Guozhen Lu}
\address[Guozhen Lu]{Department of Mathematics, University of Connecticut, Storrs, CT 06269, USA}
\email{guozhen.lu@uconn.edu}

\author{Bohan Wang}
\address[Bohan Wang]{Key Laboratory of Algebraic Lie Theory and Analysis of Ministry of Education, School of Mathematics and Statistics, Beijing Institute of Technology, Beijing
100081, PR China}
\email{wangbohanbit2022@163.com}
\address{}

\keywords{Moser-Onofri inequality, Mean field equation, Concentration-compactness, Capacity estimate.}
\thanks{$*$ Corresponding author.}
\thanks{The first author was partly supported by the National Key Research and Development Program (No.
2022YFA1006900) and National Natural Science Foundation of China (No. 12271027). The second author was partly supported by a Simons grant and a Simons Fellowship from the Simons Foundation.}

\begin{abstract}
 In this paper, we are concerned with the following $n$-Laplacian mean field equation

\[
\left\{ {\begin{array}{*{20}{c}}
   { - \Delta_n  u  = \lambda e^u} & {\rm in} \ \ \Omega,  \\
   {\ \ \ \ u = 0} &\  {\rm on}\  \partial \Omega,
\end{array}} \right.
\]
\[\]
where $\Omega$ is a smooth bounded domain of $\mathbb{R}^n \ (n\geq 2)$ and $- \Delta_n  u =-{\rm div}(|\nabla u|^{n-2}\nabla u)$. We first establish the quantization property of solutions to the above $n$-Laplacian mean field equation. As an application, combining the Pohozaev identity and the capacity estimate, we obtain the sharp constant $C(n)$ of  the Moser-Onofri inequality in the $n$-dimensional unit ball $B^n:=B^n(0,1)$,

\[
    \mathop {\inf }\limits_{u \in W_0^{1,n}(B^n)}\frac{1}{ n C_n}\int_{B^n}  | \nabla u|^n dx- \ln \int_{B^n} {e^u} dx\geq C(n),
\]
which extends the result of  Caglioti-Lions-Marchioro-Pulvirenti in \cite{Caglioti} to the case of $n$-dimensional ball. Here  $C_n=\left(\frac{n^2}{n-1}\right) ^{n-1} \omega_{n-1}$ and $\omega_{n-1}$ is the surface measure of $B^n$.
For the Moser-Onofri inequality in a general bounded domain of $\mathbb{R}^n$, we apply the technique of $n$-harmonic transplantation to give the optimal concentration level of the Moser-Onofri inequality and obtain the criterion for the existence and non-existence of extremals for the Moser-Onofri inequality.
\end{abstract}

\maketitle

\section{Introduction\label{introduction}}

The main content of this paper focuses on the quantization property of the solution of the $n$-Laplacian mean field equation and its application to the    sharp constant of  the Moser-Onofri inequality, as well as the existence and non-existence of extremal functions of the Moser-Onofri inequality. Mean field equations and Moser-Onofri inequalities have significant applications in geometric analysis, harmonic analysis and nonlinear partial differential equations. Let us briefly  present the history of the main results in this direction.
\medskip

Let $\Omega$ be a smooth bounded domain of $\mathbb{R}^n$ ($n\geq 2$) and denote by $W_0^{1,n}(\Omega)$  the closure of $C_c^\infty(\Omega)$ under the Dirichlet norm $\left(\int_\Omega |\nabla u|^ndx\right)^\frac{1}{n}$. The classical Trudinger-Moser inequality (see \cite{Moser1}) states that
\begin{equation}
\mathop {\sup }\limits_{u \in W_0^{1,n}(\Omega),\ \|\nabla u\|_n\leq1} \int_\Omega e^{\alpha_n u^{\frac{n}{n-1}}}dx< +\infty,
\label{TM1}
\end{equation}
where $\alpha_n=n \omega_{n-1}^{\frac{1}{n-1}}$ refers to the sharp constant and $\omega_{n-1}$ denotes the $n-1$ dimensional measure of unit sphere in $\mathbb{R}^n$. The Trudinger-Moser inequality in bounded domain of $\mathbb{R}^n$ has also been extended to bounded domain of Heisenberg group and complex sphere (see \cite{CohnLu1, CohnLu2}). An immediate consequence of the Trudinger-Moser inequality is the following Moser-Onofri inequality (see also \cite{Bec, Ono})

\begin{equation}
   \mathop {\inf }\limits_{u \in W_0^{1,n}(\Omega)}\frac{1}{ n C_n}\int_{\Omega}  | \nabla u|^n dx- \ln \int_{\Omega} {e^u} dx>-\infty,
\label{MO1}
\end{equation}
\[\]
where $C_n=\left(\frac{n^2}{n-1}\right)^{n-1} \omega_{n-1}$.
The critical point of the above inequality (\ref{MO1}) satisfies the following $n$-Laplacian mean field equation

 \begin{equation}
\left\{ {\begin{array}{*{20}{c}}
   { \ \ - \Delta_n  u  = \frac{\rho e^u}{\int_\Omega  e^u dx} } & {\text{in} \ \Omega, } \\
   {u = 0} & {\ \ \text{on} \ \partial \Omega,}
\end{array}} \right.
\label{mfeq1}
\end{equation}
\[\]
where $\rho=C_n$.
\medskip

As $n=2$, the aforementioned equation reduces to the classical mean field equation:

 \begin{equation}
\left\{ {\begin{array}{*{20}{c}}
   { \ \ \ \ - \Delta  u  = \frac{\rho e^u}{\int_\Omega  e^u dx} } & {{\rm in} \ \Omega, }  \\
   {u = 0} & {\ \ {\rm on} \ \partial \Omega,}
\end{array}} \right.
\label{mfeq2}
\end{equation}
\[\]
which arises in the study of Chern-Simons Higgs theory (see \cite{HKP, JW}). For $\rho<8\pi$, the functional related with equation (\ref{mfeq2}) has the compactness and the existence of solutions directly follows from the standard variational method. For $\rho=8\pi$, the existence of solutions is non-trivial due to the loss of  compactness of the related functional. In fact, many authors have found that the existence of solutions depends on the geometry of $\Omega$ in a subtle way. For example, when $\Omega$ is a ball, a consequence of the Pohozaev identity implies the non-existence of solutions for the mean field equation \eqref{mfeq2}; when $\Omega$ is a long and thin domain, the authors of  \cite{Caglioti} proved that the mean field equation admits a positive solution. For $\rho> 8\pi$, the existence of solutions of the mean field equation (\ref{mfeq2}) is a challenging problem.
The construction of Bahri-Coron \cite{Bahri} makes it possible to obtain the existence of mean field solutions on
domains with non-trivial topology. In fact, Ding-Jost-Li-Wang \cite{Ding} established the existence of solutions for $\rho \in(8\pi,16\pi)$ if $\Omega$ is a smooth bounded domain $\Omega\subseteq \mathbb{R}^2$ whose complement contains a bounded region.
Furthermore, they also obtained the similar existence result for the following mean field equation on a closed Riemann surface $(M, g)$ with genus greater than one:

$$-\Delta_g u=\rho\left(\frac{ e^u}{\int_M  e^u dx}-1\right) \ \ {\rm in}\ \ M.$$
\[\]
Struwe-Tarantello \cite{Struwe} proved a similar result for $\rho\in (8\pi, 4\pi^2)$ on the flat torus. For a general closed surface, Malchiodi \cite{Malchiodi} utilized the barycenter technique and proved the existence for $\rho\neq 8\pi \mathbb{N}$.
\medskip

In the study of the existence of solutions for mean field equation (\ref{mfeq2}), an important tool is to establish its quantization property. This  dates back to Brezis and Merle's work in \cite{Brezis}. Lately, many authors, including Nagasaki-Suzuki \cite{Nagasaki}, Li-Shafrir\cite{Li2} and Ma-Wei \cite{Wei3}, etc., have also studied extensively the quantization property of  mean field equation (\ref{mfeq2}). Their results can be stated as follows:
\medskip

\noindent\textbf{Theorem A}:\ \ Let $\{u_{\rho_k}\}$ be a sequence of solutions satisfying the mean field  equation

\[
\left\{ {\begin{array}{*{20}{c}}
   { \ \ \ \ - \Delta  u  = \frac{\rho_k}{\int_\Omega  e^u dx} e^u} & {{\rm in} \ \Omega ,}  \\
   {u = 0} & {\ \ {\rm on} \ \partial \Omega}
\end{array}} \right.
\]
\[\]
with $\rho_k\leq C$.
\medskip

(a) If $\|u_{\rho_k}\|_{L^\infty}$ is bounded, then there exists some function $u\in W_0^{1,2}(\Omega)$ such that $u_{\rho_k}\rightarrow u$ in $C^2(\Omega)$.
\medskip

(b) If $\|u_{\rho_k}\|_{L^\infty}$ is unbounded, then $u_{\rho_k}$ must blow up at some finite points set $S=\{x_1,...,x_m\}\subseteq \Omega$. Furthermore, there holds
$$\rho_k\rightarrow 8m\pi \ \ \ {\rm and}\ \ u_{\rho_k}\rightarrow 8\pi \sum\limits_{i = 1}^m {G( x,{x_i})}\ \ \ {\rm in}\ C_{loc}^2(\Omega\backslash S),$$
where $G(x,y)$ satisfies the equation

\[
\left\{ {\begin{array}{*{20}{c}}
   { - \Delta G(x,y) =\delta_x(y) } & {\rm in}\ \Omega, \\
   {G(x,y)=  0 } & \ \ {\rm on}\  \partial\Omega.  \\
\end{array}} \right.\]
\[\]

However, to our knowledge, quantization analysis for solutions of $n$-Laplacian mean field equation (\ref{mfeq1}) is still unknown. The nonlinearity of $n$-Laplacian operator and the lack of Green's representation formula for $n$-Laplacian equation bring significant challenges to the study of the related problem of the $n$-Laplacian mean field equation. In this paper, we address these difficulties and derive the following result:
\medskip

\begin{theorem}
\label{Th1.1th}
Let $0<C_1\le C_2<\infty$ be two positive constants and  $\alpha_n=n \omega_{n-1}^{\frac{1}{n-1}}$  be the sharp constant in the Moser-Trudinger inequality. Assume that
$u_\lambda$ satisfies the equation

   \begin{equation}
\left\{ {\begin{array}{*{20}{c}}
  \medskip

   { - \Delta_n  u  = \lambda e^u\ \ \ \ {\rm in} \ \Omega ,}  \\
     \medskip

   {\ \ \ \ C_1\leq\int_\Omega  {\lambda e^u dx}\leq C_2,}\\
   {\ \ \ \ \ \ \ \ \ u = 0\ \ \ \ \ \ \ {\rm on} \ \partial \Omega .}
\end{array}} \right.
\label{quan}
\end{equation}
\[\]
 Then we have the following:
\medskip

(a) If $\lambda\rightarrow 0$, the solution $u_\lambda$ must blow up at some finite points set $S=\{x_1,...,x_m\}\subseteq \Omega$ as $\lambda\rightarrow 0$. Furthermore, we have

$$\int_\Omega  {\lambda e^{u_\lambda} dx}\rightarrow \left(\frac{n}{n-1}\alpha_n \right)^{n-1}m$$
and
\begin{equation}
u_\lambda(x)\rightarrow  u_0(x) \ \ \  {\rm in} \ \  C_{loc}^1(\Omega \backslash S),
\label{quantitative formula}
\end{equation}
\[\]
where $u_0(x)$ solves the equation

\begin{equation}\begin{cases}
&- \Delta_n u_0 = \sum\limits_{i = 1}^m \left(\frac{n}{n-1} \alpha_n\right)^{n-1}\delta_{x_i}, \ \ x\in \Omega, \ x_i\in S ,\\
& u_0=0,\ \ x\in \partial\Omega.
\label{nGreen fcn}
\end{cases}\end{equation}
\medskip

(b) If  $u_\lambda$ arises blow-up, then $\lambda\rightarrow 0$.
\end{theorem}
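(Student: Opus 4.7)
The plan is to adapt the Brezis-Merle--Li-Shafrir quantization machinery to the quasilinear setting, with Boccardo-Gallou\"et type $L^1$-data estimates and DiBenedetto-Tolksdorf $C^{1,\alpha}$-regularity replacing the Green representation and linear elliptic theory used in the Laplacian case. The first step is an $n$-Laplacian analog of the Brezis-Merle alternative: there exists $\delta=\delta_n>0$ such that if $-\Delta_n v=f$ in $B_{2r}$ with $\|f\|_{L^1}<\delta$, then $e^{p|v|}$ is locally $L^1$-bounded for some $p>1$. One proves this by splitting $v$ into an $n$-harmonic part (locally bounded by standard regularity) plus a piece with small $L^1$-data satisfying a Boccardo-Gallou\"et Marcinkiewicz bound $\|\nabla\cdot\|_{L^{q,\infty}}$, $q<n$, that feeds into the Trudinger-Moser inequality. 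Applied to $f=\lambda e^{u_\lambda}$: if $\lambda$ stays bounded away from $0$, the $L^1$ bound on $e^{u_\lambda}$ allows one to cover $\Omega$ by balls on which the right-hand side has small mass, yielding an $L^p$ bound on $\lambda e^{u_\lambda}$ and then, via DiBenedetto regularity, a uniform $L^\infty$ bound on $u_\lambda$; this contradicts blow-up and proves \textbf{Part (b)}.

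\medskip

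For \textbf{Part (a)}, the same alternative forces the blow-up set $S=\{x_1,\dots,x_m\}$ to be finite with $|S|\le C_2/\delta$, and gives local uniform bounds on $u_\lambda$ outside any neighborhood of $S$. At each $x_i\in S$ I would rescale around local maxima $x_i^\lambda\to x_i$ by $\varepsilon_\lambda$ defined through $\lambda e^{u_\lambda(x_i^\lambda)}\varepsilon_\lambda^n=1$, producing $v_\lambda(y)=u_\lambda(x_i^\lambda+\varepsilon_\lambda y)-u_\lambda(x_i^\lambda)\le 0$ which solves $-\Delta_n v_\lambda=e^{v_\lambda}$ on expanding balls. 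DiBenedetto-Tolksdorf $C^{1,\alpha}_{loc}$ compactness extracts a limit $v_\infty$ on $\mathbb{R}^n$ satisfying $-\Delta_n v_\infty=e^{v_\infty}$ with $\int e^{v_\infty}<\infty$; the classification of such finite-mass entire solutions of the $n$-Liouville equation (an explicit radial profile) gives $\int_{\mathbb{R}^n}e^{v_\infty}\,dy=\bigl(\tfrac{n}{n-1}\alpha_n\bigr)^{n-1}$, so each blow-up point carries at least this much mass.

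\medskip

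The step I expect to be the main obstacle is the \textbf{no-neck property}: upgrading the per-bubble mass to the exact total $m\bigl(\tfrac{n}{n-1}\alpha_n\bigr)^{n-1}$ by showing that the mass on the neck $B_r(x_i)\setminus B_{R\varepsilon_\lambda}(x_i^\lambda)$ vanishes as $R\to\infty$ and then $r\to 0$. Linear superposition is unavailable, so the plan is to use the local Pohozaev identity for $-\Delta_n$,
\begin{equation*}
n\int_D \lambda e^{u_\lambda}\,dx \;=\; \int_{\partial D}\Bigl[(x\cdot\nabla u_\lambda)|\nabla u_\lambda|^{n-2}\partial_\nu u_\lambda-\tfrac{1}{n}(x\cdot\nu)|\nabla u_\lambda|^n+(x\cdot\nu)\lambda e^{u_\lambda}\Bigr]dS,
\end{equation*}
on such annuli, controlling the boundary integrals by the strong $C^{1,\alpha}$ convergence $v_\lambda\to v_\infty$ on the rescaled inner boundary and by the $C^1$ bound on the outer boundary from Part (b), together with a radial Serrin-Zou type sup-inf estimate for $n$-superharmonic functions across dyadic subannuli to rule out oscillation of $|\nabla u_\lambda|^n$ inside the neck. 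Once the neck estimate is established, $\int_\Omega\lambda e^{u_\lambda}\,dx\to m\bigl(\tfrac{n}{n-1}\alpha_n\bigr)^{n-1}$; the local $L^\infty$ bound outside $S$, DiBenedetto $C^{1,\alpha}$-estimates, and the strong monotonicity of $-\Delta_n$ then let one pass to the weak limit in the equation, identify $u_0$ as the solution of \eqref{nGreen fcn}, and upgrade to the asserted $C^1_{loc}(\Omega\setminus S)$ convergence.
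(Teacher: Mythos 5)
Your opening step (the Brezis--Merle type alternative via an $n$-harmonic/small-$L^1$-data splitting, finiteness of the blow-up set, $W^{1,q}_0$-bounds for $q<n$, and $C^{1,\alpha}$ regularity away from $S$) matches the paper's Lemmas on $\Sigma_\delta$, but your proofs of both (b) and the exact quantization have genuine gaps. For (b), the assertion that ``if $\lambda$ stays bounded away from $0$, the $L^1$ bound on $e^{u_\lambda}$ allows one to cover $\Omega$ by balls on which the right-hand side has small mass'' does not follow: $\int_\Omega \lambda e^{u_\lambda}\,dx\le C_2$ holds for every $\lambda$, and a sequence of densities with bounded total mass can perfectly well concentrate, carrying mass $\ge\alpha_n^{n-1}$ on arbitrarily small balls; ruling out this concentration is precisely the content of (b), so the covering argument is circular. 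The paper argues in the opposite direction: blow-up at $x_1$ forces $\mu_0(\{x_1\})\ge\alpha_n^{n-1}$ by the $\varepsilon$-regularity lemma, the comparison principle against the fundamental solution of $-\Delta_n$ then gives $u_\lambda\ge n\ln\frac{1}{|x-x_1|}$ near $x_1$, hence $\int_\Omega e^{u_\lambda}\,dx\to\infty$, and since $\lambda\int_\Omega e^{u_\lambda}\,dx\le C_2$ one concludes $\lambda\to 0$. Your (b) can only be repaired by this kind of argument, not by the $L^1$ bound alone.

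For the quantization in (a) you follow the Li--Shafrir route (rescaling to bubbles, classification of the entire $n$-Liouville solution, then a neck analysis), and you explicitly leave its decisive step unproven: the vanishing of the neck mass together with the sup-inf estimate you invoke is exactly the hard part in the quasilinear setting, and even granting it you would a priori only obtain an integer multiple of $\bigl(\tfrac{n}{n-1}\alpha_n\bigr)^{n-1}$ at each point (bubble towers are not excluded in your sketch), whereas the theorem asserts exactly one quantum per point. The paper bypasses rescaling and necks altogether: it first proves $u_\lambda\to u_0$ in $C^1_{loc}(\Omega\setminus S)$ with $-\Delta_n u_0=\sum_i\mu_0(x_i)\delta_{x_i}$ (truncation tests, rearrangement, uniform $W^{1,q}_0$ bounds), then applies the local Pohozaev identity on a fixed small ball $B^n(x_i,\varepsilon)$ after letting $\lambda\to 0$, evaluating the boundary terms from the Green-type asymptotics $u_\lambda\to\mu_0^{1/(n-1)}(x_i)G(\cdot,x_i)+R$; matching the interior term $n^2\mu_0(x_i)$ with the boundary contribution $(n-1)\omega_{n-1}(n/\alpha_n)^n\mu_0^{n/(n-1)}(x_i)$ forces $\mu_0(x_i)=\bigl(\tfrac{n}{n-1}\alpha_n\bigr)^{n-1}$ exactly, with no neck estimate needed. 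Finally, your sketch does not address blow-up at the boundary; the paper excludes it by a moving-plane/Kelvin-transform boundary estimate, which is needed before the interior analysis applies.
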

\begin{remark}
The usual proof for the analogy of $(b)$ requires complicated blow-up analysis technique and some quantitative calculations. Our proof is based on comparison theorem for $n$-Laplacian operator, avoiding some of the complicated quantitative estimates.
\end{remark}
\medskip

An immediate consequence of Theorem \ref{Th1.1th} leads to
\medskip

\begin{corollary}
\label{cor1}
 Let $\{u_{\rho_k}\}$ be a sequence of solutions satisfying $n$-Laplacian mean field  equation

   \begin{equation}
\left\{ {\begin{array}{*{20}{c}}
   {- \Delta_n  u  =\frac{\rho_k}{\int_\Omega  e^u dx} e^u\ \ \ \ {\rm in} \ \Omega, }  \\
   {\ \ \ \ \ \ \ \   u = 0\ \ \ \ \ \ \ \ \ \ \ \ \ {\rm on} \ \partial \Omega,}
\end{array}} \right.
\label{mfeq4}
\end{equation}
\[\]
with $\rho_k\leq C$. Then we have the following:
\medskip

\medskip

(a) If $\|u_{\rho_k}\|_{L^\infty}$ is bounded, then there exists $u\in W_0^{1,n}(\Omega)$ such that $u_{\rho_k}\rightarrow u$ in $C^1(\Omega)$.
\medskip

\medskip

(b) If $\|u_{\rho_k}\|_{L^\infty}$ is unbounded, then $u_{\rho_k}$ must blow up at some finite points set $S=\{x_1,...,x_m\}\subseteq \Omega$. Furthermore, we have

$$\rho_k\rightarrow \left(\frac{n}{n-1}\alpha_n\right)^{n-1} m\ \ \ {\rm and} \ \ \ u_{\rho_k}\rightarrow u_0(x) \ {\rm in}\ C_{loc}^1(\Omega\backslash S),$$
\[\]
where $u_0(x)$ denotes the equation

\begin{equation*}\begin{cases}
&- \Delta_n u_0 = \sum\limits_{i = 1}^m \left(\frac{n}{n-1} \alpha_n\right)^{n-1}\delta_{x_i}, \ \ x\in \Omega, \ x_i\in S, \\
& u_0=0,\ \ x\in \partial\Omega.
\end{cases}\end{equation*}
\end{corollary}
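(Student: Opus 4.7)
The plan is to reduce Corollary \ref{cor1} directly to Theorem \ref{Th1.1th} via the substitution $\lambda_k := \rho_k/\int_\Omega e^{u_{\rho_k}}\,dx$. Under this scaling the mean field equation (\ref{mfeq4}) becomes
\[
-\Delta_n u_{\rho_k} = \lambda_k e^{u_{\rho_k}} \ \text{in}\ \Omega, \qquad \int_\Omega \lambda_k e^{u_{\rho_k}}\,dx = \rho_k,
\]
which is exactly the setting (\ref{quan}) of Theorem \ref{Th1.1th}, with mass parameter equal to $\rho_k$.

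Before splitting cases, I would record nonnegativity: by the weak comparison principle for the $n$-Laplacian applied to $-\Delta_n u_{\rho_k}=\lambda_k e^{u_{\rho_k}}>0$ with $u_{\rho_k}=0$ on $\partial\Omega$, one has $u_{\rho_k}\geq 0$ in $\Omega$. Therefore $\int_\Omega e^{u_{\rho_k}}\,dx\geq |\Omega|$ and, in particular, $\lambda_k \leq \rho_k/|\Omega|\leq C/|\Omega|$ uniformly in $k$.

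For part (a), the additional hypothesis $\|u_{\rho_k}\|_{L^\infty}\leq M$ yields the matching bound $\int_\Omega e^{u_{\rho_k}}\,dx \leq |\Omega|e^{M}$, so $\lambda_k e^{u_{\rho_k}}$ is uniformly bounded in $L^\infty(\Omega)$. The Tolksdorf--DiBenedetto $C^{1,\alpha}$-regularity theory for degenerate quasilinear equations then provides a uniform $C^{1,\alpha}$-estimate for $u_{\rho_k}$, and Arzel\`a--Ascoli produces a subsequence converging in $C^1(\Omega)$ to some $u\in W_0^{1,n}(\Omega)$.

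For part (b), pass to a subsequence so that $\rho_k\to \rho^*\in [0,C]$. Once $\rho^*>0$ is established, Theorem \ref{Th1.1th} applies with $C_1=\rho^*/2$ and $C_2=C$: since blow-up is assumed, part (b) of Theorem \ref{Th1.1th} gives $\lambda_k\to 0$, after which part (a) of that theorem furnishes the finite blow-up set $S=\{x_1,\dots,x_m\}$, the quantization $\rho_k=\int_\Omega \lambda_k e^{u_{\rho_k}}\,dx \to \bigl(\tfrac{n}{n-1}\alpha_n\bigr)^{n-1} m$, and the $C^1_{loc}(\Omega\setminus S)$-convergence to $u_0$. The only real obstacle is therefore to rule out the degenerate case $\rho^*=0$ under the blow-up assumption $\|u_{\rho_k}\|_{L^\infty}\to \infty$; I would handle this by a Brezis--Merle-type $L^1$-estimate adapted to the $n$-Laplacian, combined with the sharp Moser--Trudinger inequality (\ref{TM1}), which forces a strictly positive minimal local mass at any blow-up point and thus contradicts $\rho_k\to 0$.
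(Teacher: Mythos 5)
Your reduction via $\lambda_k=\rho_k/\int_\Omega e^{u_{\rho_k}}\,dx$ is exactly how the paper handles Corollary \ref{cor1}: it is presented as an immediate consequence of Theorem \ref{Th1.1th} with no separate proof, and your treatment of (a) by uniform quasilinear $C^{1,\alpha}$ estimates and of (b) by invoking Theorem \ref{Th1.1th} with $C_1=\rho^*/2$, $C_2=C$ is correct. The one step you only sketch, ruling out $\rho_k\to 0$ under blow-up, is closed by machinery already in the paper, namely Lemma \ref{3th} (Esposito's estimate): if $\rho_k\to 0$ then $\|\lambda_k e^{u_{\rho_k}}\|_{L^1}\to 0$, so that estimate gives $e^{u_{\rho_k}}$ bounded in every $L^p$, hence the right-hand side of the equation is bounded in some $L^p$ with $p>1$, and quasilinear regularity yields a uniform $L^\infty$ bound on $u_{\rho_k}$, contradicting blow-up (no separate minimal-mass argument is needed, though Lemma \ref{5th} would also supply one).
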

\medskip

Another interesting problem related to $n$-Laplacian mean field equation is to consider the existence of extremals and the sharp constant for the Moser-Onofri inequality in a bounded domain $\Omega$ of $\mathbb{R}^n$:

\begin{equation}
\mathop {\inf }\limits_{u \in W_0^{1,n}(\Omega)}\frac{1}{ n C_n}\int_{\Omega}  | \nabla u|^n dx- \ln \int_{\Omega} {e^u} dx\geq C(n).
\label{MO2}
\end{equation}
\medskip

When $\Omega$ is a unit ball $B^n$ of $\mathbb{R}^n$, applying the Pohozaev identity, we can derive the nonexistence of extremals of Moser-Onofri inequality \eqref{MO2} (see Lemma \ref{poho1th} and Proposition \ref{JC1th}). Hence, it is plausible to obtain the sharp constant $C(n)$ of the Moser-Onofri inequality by computing the accurate lower bound of optimal concentration for the Moser-Onofri inequality. Indeed, we obtain

\begin{theorem}
\label{Th1.2th}
There holds that

$$\inf_{u\in W^{1,n}_0(B^n)}\Big(\frac{1}{n C_n}\int_{B^n} {|\nabla u|^n} dx- \ln \int_{B^n}{e^u}dx\Big)=\frac{1}{nC_n}\int_{\mathbb{R}^{n}}e^{\eta_0(y)}\eta_0(y)dy+\frac{n-1}{n}\ln \beta_n- \ln C_n,$$
\[\]
where $\beta_n=n\left(\frac{n^2}{n-1}\right)^{n-1}$ and $\eta_0=\ln \big(\frac{\beta_n}{(1+{\left| x \right|^{\frac{n}{n-1}}})^n}\big)$.
\end{theorem}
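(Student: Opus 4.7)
The plan is to compute the infimum by blow-up analysis combined with explicit integration against the bubble profile $\eta_0$, proceeding via a matching upper and lower bound. For the upper bound, I use the test functions $\phi_\epsilon(x) := \eta_0(x/\epsilon) - \eta_0(1/\epsilon) \in W_0^{1,n}(B^n)$. A direct change of variables $u = |y|^{n/(n-1)}$ combined with a Beta-function identity gives $\int_{\mathbb{R}^n} e^{\eta_0}\,dy = C_n$, so that
\[
\ln\int_{B^n} e^{\phi_\epsilon}\,dx = -\ln\beta_n + \tfrac{n}{n-1}\ln(1/\epsilon) + \ln C_n + o(1).
\]
For the Dirichlet integral, integration by parts against the equation $-\Delta_n \eta_0 = e^{\eta_0}$ on $B(0,1/\epsilon)$, together with the far-field asymptotics $\eta_0(r) \sim \ln\beta_n - \tfrac{n^2}{n-1}\ln r$ and $|\eta_0'|^{n-1} r^{n-1} \to \bigl(\tfrac{n^2}{n-1}\bigr)^{n-1}$, yields
\[
\int_{B(0,1/\epsilon)}|\nabla\eta_0|^n\,dy = \int_{\mathbb{R}^n}\eta_0 e^{\eta_0}\,dy - C_n\ln\beta_n + \tfrac{n^2}{n-1}C_n\ln(1/\epsilon) + o(1).
\]
Substituting these into $J(\phi_\epsilon) := \tfrac{1}{nC_n}\int|\nabla\phi_\epsilon|^n - \ln\int e^{\phi_\epsilon}$ and letting $\epsilon\to 0$, the divergent $\ln(1/\epsilon)$ contributions cancel and the limit equals the claimed constant.

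For the matching lower bound, I use the Pohozaev-based nonexistence (Lemma \ref{poho1th} and Proposition \ref{JC1th}): the infimum is not attained on $B^n$, so any minimizing sequence must blow up. After Schwarz symmetrization, which preserves $\int e^u$ and does not increase the Dirichlet $n$-energy, I may take the sequence radial and decreasing; to exploit an Euler-Lagrange equation, I approximate it by critical points $u_{\rho_k}$ of the subcritical mean field problem at levels $\rho_k\to C_n^-$. Corollary \ref{cor1} applies, forcing a single blow-up point, which by radiality is the origin. Set $\mu_k := u_{\rho_k}(0)$, $\lambda_k := \rho_k/\!\int e^{u_{\rho_k}}$, and $\epsilon_k^n := \beta_n/(\lambda_k e^{\mu_k})$. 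Then $v_k(y) := u_{\rho_k}(\epsilon_k y) - \mu_k + \ln\beta_n$ satisfies $-\Delta_n v_k = e^{v_k}$ on expanding balls and converges in $C^1_{\mathrm{loc}}(\mathbb{R}^n)$ to $\eta_0$, while Corollary \ref{cor1} also gives $u_{\rho_k}\to u_0(x) := \tfrac{n^2}{n-1}\ln(1/|x|)$ in $C^1_{\mathrm{loc}}(B^n\setminus\{0\})$. Using the identity $\int|\nabla u_{\rho_k}|^n = \int\lambda_k u_{\rho_k} e^{u_{\rho_k}}\,dx$ and the rescaling $x = \epsilon_k y$ converts the Dirichlet part into $\int_{\mathbb{R}^n}\eta_0 e^{\eta_0}\,dy + (\mu_k - \ln\beta_n)\,C_n + o(1)$, while a direct computation gives $\ln\int e^{u_{\rho_k}}\,dx = \mu_k - \ln\beta_n + n\ln\epsilon_k + \ln C_n + o(1)$.

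Substituting these into $J(u_{\rho_k})$ and collecting terms produces
\[
J(u_{\rho_k}) = \tfrac{1}{nC_n}\!\int_{\mathbb{R}^n}\!\eta_0 e^{\eta_0}\,dy + \tfrac{n-1}{n}\ln\beta_n - \ln C_n - \tfrac{n-1}{n}\bigl(\mu_k + \tfrac{n^2}{n-1}\ln\epsilon_k\bigr) + o(1),
\]
so the correct limit requires the matching condition $\mu_k + \tfrac{n^2}{n-1}\ln\epsilon_k \to 0$. This is the principal technical obstacle: in the linear case $n=2$ it follows from the Green's function representation of $u_{\rho_k}$, but here no such formula is available. Rigorously, it must be derived by a sharp $n$-capacity estimate on the intermediate annulus $\{\epsilon_k R \leq |x| \leq \delta\}$, comparing $u_{\rho_k}$ with the radial $n$-harmonic interpolant matching its boundary values and controlling the error through the explicit $n$-capacity of that annulus; this is essentially a nonlinear analogue of computing the Robin function of the Green's function at the blow-up point. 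Once the matching is established, $J(u_{\rho_k})$ converges to the displayed value, giving the lower bound, and together with the upper bound from the test functions $\phi_\epsilon$ we obtain the claimed equality.
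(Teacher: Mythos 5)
Your proposal is correct and follows essentially the same route as the paper: the same truncated-bubble test functions for the upper bound, and for the lower bound the same subcritical-extremal approximation, blow-up and rescaling to the Liouville profile $\eta_0$, with the decisive matching (neck) condition obtained by an $n$-capacity comparison on the intermediate annulus against the radial $n$-harmonic logarithmic interpolant, which is precisely the content of Proposition \ref{JC1th}. Note only that the capacity argument need deliver the one-sided bound $\limsup\big(c_\rho+\tfrac{n^2}{n-1}\ln\varepsilon_\rho+\cdots\big)\le 0$ (the reverse inequality then follows from the test-function upper bound), which is exactly how the paper closes the argument.
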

\medskip

\begin{remark}
Caglioti-Lions-Marchioro-Pulvirenti in \cite{Caglioti} obtained the sharp constant of the Moser-Onofri inequality in two dimensional disk. However, their method based on ODE does not seem to be applicable to the $n$-Laplacian mean field equation. Furthermore,  the calculation of the optimal concentration level of the Moser-Onofri inequality requires  the Green representation formula in dimension two, which is not attainable for the $n$-Laplacian operator.
 We utilize the capacity estimate to overcome this difficulty and achieve the desired result.
\end{remark}
\medskip

\vskip0.1cm
For a general bounded domain $\Omega$, applying Theorem \ref{Th1.2th} and the technique of $n$-harmonic transplantation developed in \cite{Flucher}, we obtain the optimal concentration level of the Moser-Onofri inequality.

\begin{theorem}\label{thm3}
\label{th3}
Assume that $\Omega$ is a smooth bounded domain in $\mathbb{R}^n$ and $x_0\in \Omega$, then

\begin{equation}\begin{split}
F_{\Omega}^{loc}(x_0)&\triangleq\inf \left\{\lim\limits_{k\rightarrow +\infty}\Big(\frac{1}{n C_n}\int_{\Omega} {|\nabla u_k|^n}dx - \ln \int_{\Omega}{e^{u_k}} dx\Big)\ |\ \lim\limits_{k\rightarrow +\infty}\int_{\Omega}e^{u_k}dx=+\infty,\ \frac{e^{u_k}dx}{\int_{\Omega}e^{u_k}dx}\rightharpoonup \delta_{x_0}\right\}\\
&=\inf_{u\in W^{1,n}_0(B^n)}\Big(\frac{1}{n C_n}\int_{B^n} {|\nabla u|^n}dx - \ln \int_{B^n}{e^u} dx\Big)-n\ln \rho_{\Omega}(x_0),
\end{split}\end{equation}
\[\]
where $\rho_{\Omega}(x_0)$ is the $n$-harmonic radius at $x_0$ (see Definition \ref{4-2def}) in Section 4.
\end{theorem}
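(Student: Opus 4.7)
The plan is to prove matching upper and lower bounds for $F_\Omega^{loc}(x_0)$. The upper bound will come from $n$-harmonic transplantation of near-optimizers on $B^n$ into $\Omega$, while the lower bound will come from a localization around $x_0$ combined with the sharp $n$-capacity estimate used in the proof of Theorem \ref{Th1.2th}.

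\emph{Upper bound.} Pick a sequence $v_k\in W_0^{1,n}(B^n)$ with $\int_{B^n}e^{v_k}\,dy\to\infty$ realizing the ball infimum in Theorem \ref{Th1.2th}. By Theorem \ref{Th1.1th} together with the non-existence on $B^n$ (cf.\ Proposition \ref{JC1th}), $v_k$ may be arranged so that $e^{v_k}\,dy/\int e^{v_k}\rightharpoonup\delta_0$. Let $G(\cdot,x_0)$ be the $n$-Green's function of $\Omega$ at $x_0$, and let $T_\Omega:\Omega\to B^n$ denote Flucher's $n$-harmonic transplantation \cite{Flucher}, which maps level sets of $G(\cdot,x_0)$ to concentric spheres in $B^n$ and preserves $n$-capacity. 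Define $u_k=v_k\circ T_\Omega\in W_0^{1,n}(\Omega)$. Two properties of $T_\Omega$ drive the argument: first, $\int_\Omega|\nabla u_k|^n\,dx=\int_{B^n}|\nabla v_k|^n\,dy$; second, from the asymptotic expansion of $G(\cdot,x_0)$ at its pole together with the definition of $\rho_\Omega(x_0)$ and the concentration of $e^{v_k}$ at the origin, one obtains $\log\int_\Omega e^{u_k}\,dx-\log\int_{B^n}e^{v_k}\,dy\to n\log\rho_\Omega(x_0)$. Substituting into the functional yields $F_\Omega^{loc}(x_0)\le \inf_{B^n}(\cdots)-n\log\rho_\Omega(x_0)$.

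\emph{Lower bound.} Fix an admissible sequence $u_k$ with $c_k:=\log\int_\Omega e^{u_k}\to\infty$ and $e^{u_k-c_k}\rightharpoonup\delta_{x_0}$. For small $r>0$ with $B(x_0,r)\Subset\Omega$, split the Dirichlet energy at $\partial B(x_0,r)$. On $\Omega\setminus B(x_0,r)$, replace $u_k$ by its $n$-harmonic extension with the same boundary trace on $\partial B(x_0,r)$ and zero on $\partial\Omega$; this does not increase the outer energy. A capacity computation based on the sharp expansion
$$\operatorname{cap}_n(B(x_0,r),\Omega)=\omega_{n-1}\bigl(\log(\rho_\Omega(x_0)/r)\bigr)^{-(n-1)}(1+o_r(1)),$$
combined with the $n$-harmonic profile of $u_k$ at distance $r$ from $x_0$, contributes $-n\log\rho_\Omega(x_0)+o_r(1)$ to the functional. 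On the interior ball, rescale $x=x_0+\varepsilon_k y$ with $\varepsilon_k\to 0$ chosen so that the rescaled equation is normalized; by Theorem \ref{Th1.1th}, the inner limit profile is the radial bubble $\eta_0$, whose contribution to the functional is exactly the ball infimum computed in Theorem \ref{Th1.2th}. Summing the inner and outer contributions and sending $r\to 0$ produces the matching lower bound $F_\Omega^{loc}(x_0)\ge\inf_{B^n}(\cdots)-n\log\rho_\Omega(x_0)$.

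\emph{Main obstacle.} The delicate step is gluing the inner bubble and the outer $n$-harmonic profile so as to yield exactly $-n\log\rho_\Omega(x_0)$ with vanishing remainders as $r\to 0$. Without a Green's representation formula for $\Delta_n$, this relies on (i) the sharp asymptotic expansion of the $n$-Green's function at its pole, (ii) $C^{1,\alpha}$ comparison and regularity for $n$-harmonic functions, and (iii) the capacity estimate used in the proof of Theorem \ref{Th1.2th}, all glued together via Flucher's transplantation machinery. These ingredients jointly replace the explicit conformal-map toolkit used in dimension two in \cite{Caglioti}.
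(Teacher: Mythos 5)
Your upper bound is essentially the paper's argument: transplantation preserves the Dirichlet energy, and the volume comparison $\int_\Omega e^{u_k}\,dx\ge\rho_\Omega^n(x_0)\int_{B^n}e^{v_k}\,dy$ of Proposition \ref{pro4.4} (together with the transport of the concentration to $x_0$) already gives $F_\Omega^{loc}(x_0)\le\inf_{W_0^{1,n}(B^n)}(\cdots)-n\ln\rho_\Omega(x_0)$; the limit equality you assert for the volume term is not needed, only this one-sided inequality.

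The lower bound, however, has a genuine gap. $F_\Omega^{loc}(x_0)$ is an infimum over \emph{arbitrary} sequences $u_k\in W_0^{1,n}(\Omega)$ with $\int_\Omega e^{u_k}dx\to+\infty$ and $e^{u_k}dx/\int_\Omega e^{u_k}dx\rightharpoonup\delta_{x_0}$; such sequences solve no equation. Your inner step — ``rescale so that the rescaled equation is normalized; by Theorem \ref{Th1.1th} the inner limit profile is the radial bubble $\eta_0$'' — is only meaningful for solutions of $-\Delta_n u=\lambda e^u$ (e.g.\ the subcritical extremals appearing in the proof of Theorem \ref{Th1.2th}); for a general admissible sequence there is no rescaled equation, no bubble, and likewise no ``$n$-harmonic profile of $u_k$ at distance $r$'': weak-$*$ concentration of the normalized measures gives no control whatsoever on the trace of $u_k$ on $\partial B(x_0,r)$, so neither the claimed inner contribution (``exactly the ball infimum'') nor the outer contribution (``$-n\ln\rho_\Omega(x_0)+o_r(1)$'') is justified, and the ``delicate gluing'' you flag is precisely the missing proof. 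The paper avoids blow-up analysis at this stage altogether: it truncates at the fixed level $1$, observes that $A_k:=\{u_k\ge1\}\subseteq B(x_0,r_k)$ with $r_k\to0$ and $\ln\int_\Omega e^{u_k}dx=\ln\int_{A_k}e^{u_k}dx+o(1)$, replaces $u_k$ below level $1$ by the $n$-capacity potential of $A_k$ (which cannot increase the energy and leaves $\int_{A_k}e^{u_k}dx$ untouched), and then transfers the configuration to the ball $B^n(x_0,\rho_\Omega(x_0)+o(1))$ with equal Dirichlet energy by means of Flucher's change-of-domain formula for the $n$-modulus (Proposition \ref{prop4.4}); the constant $-n\ln\rho_\Omega(x_0)$ then comes from the identity $F^{loc}_{B^n}(0)=\inf_{W_0^{1,n}(B^n)}(\cdots)$ (trivial in the direction $\ge$ on the ball, and attained by the concentrating subcritical extremals or the test functions $\Phi_L$) together with the scaling of the volume term, not from matching a bubble against an outer Green-type expansion. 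To repair your argument you would either have to restrict to solution sequences (which does not compute $F_\Omega^{loc}$) or replace the bubble step by the elementary bound ``functional on the small ball $\ge$ its infimum'' combined with the capacity/modulus bookkeeping — which is exactly the paper's route.
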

\medskip

Define

$$C(n,\Omega)=\inf_{u\in W^{1,n}_0(\Omega)}\Big(\frac{1}{n C_n}\int_{\Omega} {|\nabla u|^n} - \ln \int_{\Omega}{e^u} dx\Big).$$
\[\]
Obviously,

$$C(n,\Omega)\leq \inf_{u\in W^{1,n}_0(B^n)}\Big(\frac{1}{n C_n}\int_{B^n} {|\nabla u|^n} - \ln \int_{B^n}{e^u} dx\Big)-n\sup_{x_0\in \Omega}\ln \rho_{\Omega}(x_0).$$
\[\]
Then we can derive the following criterion for the existence of extremals for the Moser-Onofri inequality on a general bounded domain.
\medskip

\begin{theorem}\label{thm4}

\label{Thcri}
If

$$C(n,\Omega)<\inf_{u\in W^{1,n}_0(B^n)}\Big(\frac{1}{n C_n}\int_{B^n} {|\nabla u|^n} - \ln \int_{B^n}{e^u} dx\Big)-n\sup_{x_0\in \Omega}\ln \rho_{\Omega}(x_0),$$
\[\]
then $C(n,\Omega)$ can be achieved by some function $u\in W^{1,n}_0(\Omega)$. In other words, if $C(n,\Omega)$ is not achieved, then

$$C(n,\Omega)=\inf_{u\in W^{1,n}_0(B^n)}\Big(\frac{1}{n C_n}\int_{B^n} {|\nabla u|^n} - \ln \int_{B^n}{e^u} dx\Big)-n\sup_{x_0\in \Omega}\ln \rho_{\Omega}(x_0).$$
\end{theorem}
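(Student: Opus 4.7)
The plan is to prove the contrapositive. Assume $C(n,\Omega)$ is not attained and write, for a bounded domain $D\subset\mathbb{R}^n$,
\[
J_D(u):=\frac{1}{nC_n}\int_D|\nabla u|^n\,dx-\ln\int_D e^u\,dx.
\]
The reverse inequality
\[
C(n,\Omega)\leq \inf_{u\in W_0^{1,n}(B^n)}J_{B^n}(u)-n\sup_{x\in\Omega}\ln\rho_\Omega(x)
\]
was already noted just before the theorem, so it suffices to prove the matching lower bound under non-attainment; combined with the strict inequality in the hypothesis this yields a contradiction and thus forces $C(n,\Omega)$ to be attained.

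Take a minimizing sequence $\{u_k\}\subset W_0^{1,n}(\Omega)$ with $J_\Omega(u_k)\to C(n,\Omega)$. I first dispose of the case where $\{\|\nabla u_k\|_{L^n(\Omega)}\}$ stays bounded. After passing to a subsequence, $u_k\rightharpoonup u_0$ in $W_0^{1,n}(\Omega)$ and $u_k\to u_0$ a.e.\ by Rellich. The Trudinger-Moser inequality \eqref{TM1}, combined with the elementary Young-type estimate $t\leq\epsilon\,t^{n/(n-1)}+C(\epsilon)$ valid for $t\geq 0$, implies that $\{e^{u_k}\}$ is bounded in $L^p(\Omega)$ for every $p\in[1,\infty)$ and hence equi-integrable. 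Pointwise a.e.\ convergence together with equi-integrability gives $\int_\Omega e^{u_k}\to\int_\Omega e^{u_0}$, and weak lower semicontinuity of the Dirichlet energy then yields $J_\Omega(u_0)\leq\liminf_k J_\Omega(u_k)=C(n,\Omega)$. Thus $u_0$ would attain the infimum, contradicting non-attainment.

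It follows that, along a subsequence, $\|\nabla u_k\|_{L^n(\Omega)}\to\infty$; since $J_\Omega(u_k)$ is bounded, $\int_\Omega e^{u_k}\to\infty$ as well. I next claim that the probability measures $\mu_k:=e^{u_k}\,dx/\int_\Omega e^{u_k}\,dx$ concentrate at a single interior point $x_0\in\Omega$, and this is the main obstacle. The cleanest route is to apply Ekeland's variational principle to replace $\{u_k\}$ by a Palais-Smale sequence for $J_\Omega$; after normalizing by $\lambda_k:=C_n/\int_\Omega e^{u_k}$, this becomes an approximate-solution sequence for \eqref{mfeq4} with $\rho_k\to C_n$. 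Corollary \ref{cor1} then dichotomizes: the $L^\infty$-bounded case returns us to the compact case handled above, while the unbounded case yields blow-up on a finite interior set $S=\{x_1,\dots,x_m\}$ with $\rho_k\to mC_n$. The normalization forces $m=1$, so $\mu_k\rightharpoonup\delta_{x_0}$ for a single $x_0$. Boundary concentration is excluded because $\rho_\Omega(x_0)\to 0$ as $x_0\to\partial\Omega$, which by Theorem \ref{thm3} would yield $F_\Omega^{loc}(x_0)=+\infty$, incompatible with boundedness of $J_\Omega(u_k)$.

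With single interior concentration in hand, Theorem \ref{thm3} gives
\[
C(n,\Omega)=\lim_{k\to\infty}J_\Omega(u_k)\geq F_\Omega^{loc}(x_0)=\inf_{u\in W_0^{1,n}(B^n)}J_{B^n}(u)-n\ln\rho_\Omega(x_0)\geq \inf_{u\in W_0^{1,n}(B^n)}J_{B^n}(u)-n\sup_{x\in\Omega}\ln\rho_\Omega(x),
\]
which contradicts the strict inequality in the hypothesis. The principal difficulty is in the third paragraph: rigorously reducing from an arbitrary minimizing sequence to one falling under Corollary \ref{cor1}. One must verify that the Palais-Smale modification remains in $W_0^{1,n}(\Omega)$, still satisfies $\int_\Omega e^{u_k}\to\infty$, and that the approximate-equation error produced by Ekeland is small enough in the appropriate dual norm to be absorbed into the concentration-compactness argument underlying Corollary \ref{cor1}.
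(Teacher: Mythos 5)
Your overall architecture (dichotomy between a compact case and single-point interior blow-up, then Theorem \ref{thm3} plus the strict inequality to reach a contradiction) matches the paper's, and your treatment of the bounded-Dirichlet-energy case is fine. The genuine gap is in your third paragraph, and it is exactly the step you flag but do not resolve: you want to run the quantization result (Corollary \ref{cor1}) on a sequence produced by Ekeland's variational principle, i.e.\ on \emph{approximate} solutions $-\Delta_n v_k=\frac{C_n e^{v_k}}{\int_\Omega e^{v_k}dx}+f_k$ with $f_k\to 0$ only in a dual norm. But Theorem \ref{Th1.1th} and Corollary \ref{cor1} are proved in this paper only for exact solutions of $-\Delta_n u=\lambda e^u$, and the ingredients of that proof do not obviously tolerate such an error term: the Brezis--Merle-type exponential estimate of Lemma \ref{3th} requires the right-hand side to be a nonnegative $L^1$ function, the boundary estimate of Lemma \ref{partial} uses the moving-plane/Kelvin-transform argument for the exact equation, and the identification $\mu_0(x_i)=(\frac{n}{n-1}\alpha_n)^{n-1}$ rests on the Pohozaev identity for exact solutions. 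Your boundary-exclusion argument is also not self-contained, since Theorem \ref{thm3} is stated only for interior concentration points; in the paper boundary blow-up is excluded by Lemma \ref{partial}, again for exact solutions. As written, the concentration dichotomy for an arbitrary minimizing sequence is therefore not established.

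The paper sidesteps this entirely, and the same device repairs your proof: instead of an arbitrary minimizing sequence, take $u_k$ to be the extremal of the subcritical functional $J_{C_n-\epsilon_k}$ on $\Omega$ with $\epsilon_k\to 0$ (existence follows from exactly the compactness argument you used in your bounded case, as in Lemma \ref{sub}). These $u_k$ solve the exact $n$-Laplacian mean field equation \eqref{mfeq4} with $\rho_k=C_n-\epsilon_k\le C_n$, and a squeeze argument shows $J_{C_n}(u_k)\to C(n,\Omega)$, so they are still a minimizing family for the critical problem. Then Corollary \ref{cor1} applies verbatim: either $\|u_k\|_{L^\infty}$ is bounded, in which case quasilinear regularity gives $C^1$ convergence to an extremal of $C(n,\Omega)$, or blow-up occurs at a single interior point (the bound $\rho_k\le C_n$ forces $m=1$, and Lemma \ref{partial} excludes the boundary), whence $C(n,\Omega)\ge F^{loc}_\Omega(x_0)\ge \inf_{B^n}J_{B^n}-n\sup_{x\in\Omega}\ln\rho_\Omega(x)$, contradicting the hypothesis. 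With this substitution your argument coincides with the paper's; without it, the Ekeland route would require re-proving the whole quantization theory for approximate solutions.
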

\medskip

\begin{remark}
Chang-Chen-Lin \cite{Chang1} have obtained the criterion for the existence of extremals of the Moser-Onofri inequality in two dimensional bounded domain through the conformal map.
\end{remark}

\section{the Proof of Theorem \ref{Th1.1th}}

In this section, we will establish the quantization property for positive solutions of the following $n$-Laplacian mean field equation (\ref{quan}):

  \[
\left\{ {\begin{array}{*{20}{c}}
  \medskip

   { - \Delta_n  u  = \lambda e^u\ \ \ \ \ {\rm in} \ \Omega \subseteq \mathbb{R}^{n},}  \\
   \medskip

   { 0<C_1\leq\int_\Omega  {\lambda e^u dx}\leq C_2,}\\
   { \  u = 0\ \ \ \ \ \  \ \ {\rm on} \ \partial \Omega .}
\end{array}} \right.
\]
\[\]
Namely, we shall provide the proof of Theorem \ref{Th1.1th}. The proof is divided into three steps. In Step 1, we show that the solution $u_\lambda$ of equation (\ref{quan}) must blow up at some finite points set $S=\{x_1,...,x_m\}\subseteq \Omega$ as $\lambda\rightarrow 0$. In Step 2, we further prove that

$$\lim\limits_{\lambda\rightarrow 0}\int_\Omega  {\lambda e^u dx}= \left(\frac{n}{n-1}\alpha_n\right)^{n-1} m$$
and

$$
\lim\limits_{\lambda\rightarrow 0}u_\lambda(x)\rightarrow  u_0(x) \ \ \  {\rm in} \ \  C_{loc}^1(\Omega \backslash S),
$$
\[\]
where $u_0(x)$ satisfies the equation

\[
\begin{cases}
&- \Delta_n u_0 = \sum\limits_{i = 1}^m \left(\frac{n}{n-1} \alpha_n\right)^{n-1}\delta_{x_i}, \ \ x\in \Omega,\\
& u_0=0,\ \ x\in \partial\Omega
\label{Green fcn}
\end{cases}\]
\[\]
for $x_i\in S$.
In Step 3, we explain that $\lambda\rightarrow 0$ is indeed equivalent to $u_\lambda$ blowing up.
\medskip

\medskip

\emph{\bf The proof of  Step 1}: We show that $u_\lambda$ must blow up at some finite points set $S=\{x_1,...,x_m\}\subseteq \Omega$ when $\lambda$ approaches to zero.
\medskip

We first prove that $u_\lambda$ is unbounded when $\lambda$ approaches to zero. We argue this by contradiction. If not, there
exists some constant $C$ such that $\|u_\lambda\|_{L^\infty(\Omega)}\leq C$.
One can easily conclude that

\[\lim\limits_{\lambda\rightarrow 0}\lambda\int_\Omega {e^{u_\lambda}} dx\leq \lim\limits_{\lambda\rightarrow 0}\lambda e^{\|u_\lambda\|_{L^\infty(\Omega)}}|\Omega|= 0,
\]
\[\]
which contradicts with the assumption, $\lambda\int_\Omega {e^{u_\lambda}} dx\geq C_1 >0$ of Theorem \ref{Th1.1th}.
\medskip

\medskip

Define the blow-up set

\begin{equation}
S:=\left\{ \begin{array}{*{20}{c}}
     x\in  \overline{\Omega} : & \begin{array}{l}
u_\lambda \ {\rm is \ the\ solutions\   of\  equation}\ (\ref{quan}),
     \ {\rm there\  exists} \\
x_\lambda\in\Omega\ {\rm such\ that} \ u_\lambda(x_\lambda)\rightarrow\infty\ {\rm as}\ x_\lambda\rightarrow x . \\
    \end{array}  \\
\end{array}\right\}
\label{S}
\end{equation}
\[\]
Then we will prove $S=\{x_1,...,x_m\}\subseteq\Omega$ by defining a new set ``${\Sigma} _{\delta} $" and analyzing the relationship between $S=\{x_1,...,x_m\}$ and ``${\Sigma} _{\delta} $".
\medskip

\medskip

Define $\mu_\lambda:=\lambda e^{u_\lambda}dx$, then $\mu_\lambda(\Omega)=\int_\Omega\lambda e^{u_\lambda}dx\leq C$. Hence, there exists a $\mu_0\in\mathfrak{M}(\Omega)$, the set of all real bounded Borel measures on $\Omega$, such that $\mu_\lambda\rightharpoonup \mu_0$ in the sense of measure.
We  also denote by

\begin{equation}
{\Sigma} _{\delta} : = \left\{ x\ |\ x \in \Omega ,\ \exists\  r=r(x),\ {\rm s.t.}\ \mu_0(B^n(x,r))<(\alpha_n-\delta)^{n-1}\right\} \ \ \ {\rm for\  any} \ \delta>0.
\label{6}
\end{equation}
\[\]
We claim
\begin{lemma}
\label{5th}
If $x_0\in {\Sigma} _{\delta}$ , then $u_\lambda \in L^\infty({B^n}(x_0,r))$ for some $r>0$.
\end{lemma}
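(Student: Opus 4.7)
The plan is to imitate the classical Brezis--Merle argument from the two-dimensional case, replacing the Green's representation and linear superposition by a Kilpel\"ainen--Mal\'y Wolff-potential estimate and the weak comparison principle for $-\Delta_n$. The three key ingredients are: (i) localize the mass of $\lambda e^{u_\lambda}$ near $x_0$ via Portmanteau; (ii) a Brezis--Merle type exponential integrability lemma for $-\Delta_n$; (iii) standard interior regularity for quasilinear equations with right-hand side in $L^p$, $p>1$.

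First I would exploit $x_0\in\Sigma_\delta$ together with $\mu_\lambda\rightharpoonup\mu_0$ to localize mass. Given the radius $\rho>0$ from the definition of $\Sigma_\delta$, I would choose $r\in(0,\rho)$ with $\mu_0(\partial B^n(x_0,r))=0$ and $\mu_0(\overline{B^n(x_0,r)})<(\alpha_n-\tfrac{\delta}{2})^{n-1}$; all but countably many $r$ work. The Portmanteau theorem then yields
\[
\int_{B^n(x_0,r)}\lambda e^{u_\lambda}\,dx<(\alpha_n-\tfrac{\delta}{2})^{n-1}
\qquad\text{for all sufficiently small }\lambda.
\]
Set $B:=B^n(x_0,r)$ and let $v_\lambda\in W_0^{1,n}(B)$ be the unique nonnegative weak solution of $-\Delta_n v_\lambda=\lambda e^{u_\lambda}$ in $B$ with vanishing boundary values. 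A Brezis--Merle type estimate for $-\Delta_n$, derivable from the Kilpel\"ainen--Mal\'y pointwise bound $v_\lambda(x)\le c\int_0^{2r}\mu_\lambda(B(x,s))^{1/(n-1)}\,ds/s$ combined with the sharp exponential integrability of the resulting Wolff potential, then yields $p>1$ and $C>0$ (independent of $\lambda$) with
\[
\int_B\exp(p\,v_\lambda)\,dx\le C.
\]
The strict smallness of $\|\lambda e^{u_\lambda}\|_{L^1(B)}$ below $\alpha_n^{n-1}$ is precisely what permits the choice $p>1$.

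Next I would transfer exponential integrability from $v_\lambda$ to $u_\lambda$ via the comparison principle for $-\Delta_n$. Since $v_\lambda+\sup_{\partial B}u_\lambda$ solves the same quasilinear equation as $u_\lambda$ in $B$ with boundary data dominating $u_\lambda$ (adding a constant does not change $-\Delta_n$), the weak comparison principle gives $u_\lambda\le v_\lambda+\sup_{\partial B}u_\lambda$ on $B$. The a priori control of $\sup_{\partial B}u_\lambda$ is the most delicate point: one arranges it by selecting $r$ outside a negligible set of ``bad'' radii, using that the total mass $\mu_\lambda(\Omega)$ is uniformly bounded together with a Fubini/covering argument showing that the set of radii on which $u_\lambda$ can concentrate has measure zero. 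Combining, $\exp(p\,u_\lambda)\in L^1(B)$ uniformly in $\lambda$, so that $\lambda e^{u_\lambda}$ is uniformly bounded in $L^p(B)$ for some $p>1$. Classical interior $L^\infty$ regularity for the quasilinear equation $-\Delta_n w=f$ with $f\in L^p_{\mathrm{loc}}$, $p>1$ (DiBenedetto--Tolksdorf, or a Moser iteration adapted to quasilinear equations) then delivers $u_\lambda\in L^\infty(B^n(x_0,r/2))$ uniformly in $\lambda$, which is the conclusion.

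The principal obstacle is the nonlinear Brezis--Merle step (ii): the Green representation that makes this transparent when $n=2$ is unavailable, so one must invoke Wolff-potential pointwise theory or perform a direct truncation/Moser iteration tailored to the exponential right-hand side, taking care that the critical exponent matches the Trudinger--Moser constant $\alpha_n$ appearing in the definition of $\Sigma_\delta$. A secondary technical point is the a priori control of $u_\lambda$ on $\partial B$, which is circumvented by the freedom to vary the radius $r$.
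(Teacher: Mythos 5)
Your overall architecture (localize the mass below the critical threshold, get exponential integrability for an auxiliary problem with zero boundary data, transfer to $u_\lambda$, then quasilinear regularity) is the right one, but the transfer step has a genuine gap: the comparison $u_\lambda\le v_\lambda+\sup_{\partial B}u_\lambda$ requires a bound on $\sup_{\partial B^n(x_0,r)}u_\lambda$ that is uniform in $\lambda$ on a \emph{fixed} sphere, and the Fubini/``good radius'' argument you sketch cannot deliver it. From the only available a priori information (the uniform $L^1$ bound on $\lambda e^{u_\lambda}$, hence uniform bounds in $W_0^{1,q}(\Omega)$ for $q<n$ and in $L^p(\Omega)$ for all $p$), a Fubini selection gives, for each $\lambda$ separately, a radius $r_\lambda$ on which one controls an \emph{integral} of $u_\lambda$ (or of $|\nabla u_\lambda|$) over the sphere; the radius depends on $\lambda$, and even on a good sphere an $L^p$ or $W^{1,q}$ ($q<n$) trace bound does not embed into $L^\infty(\partial B)$ when $n\ge 3$. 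Since the lemma must produce a bound on a fixed ball uniform in $\lambda$ (this is how it is used to characterize the blow-up set), the boundary-sup control is precisely the obstruction created by the absence of a Green representation, and it is not circumvented by varying $r$. A secondary issue is quantitative: the Kilpel\"ainen--Mal\'y Wolff-potential bound carries a non-sharp structural constant, so by itself it does not give $e^{pv_\lambda}\in L^1$ with $p>1$ when the mass is only known to be below $(\alpha_n-\delta)^{n-1}$, i.e.\ barely subcritical; one needs the sharp Brezis--Merle analogue for quasilinear operators (the paper's Lemma 2.3, due to Esposito), which you could of course cite instead.

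The paper's proof avoids the boundary sup entirely by a different decomposition \emph{inside} the ball: write $u_\lambda=u_\lambda^1+u_\lambda^2$, where $u_\lambda^1$ is $n$-harmonic with boundary values $u_\lambda$ and $u_\lambda^2\in W_0^{1,n}$ solves $-\mathrm{div}\,\vec a(x,\nabla u_\lambda^2)=\lambda e^{u_\lambda}$ with $\vec a(x,\nabla u_\lambda^2)=|\nabla u_\lambda|^{n-2}\nabla u_\lambda-|\nabla(u_\lambda-u_\lambda^2)|^{n-2}\nabla(u_\lambda-u_\lambda^2)$, which satisfies the structure conditions of the sharp exponential-integrability lemma; this yields $e^{u_\lambda^2}\in L^{p_1}$ with $p_1>1$ from the smallness of the local mass. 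The harmonic part is then handled with purely interior information: the global estimate $e^{u_\lambda}\in L^q(\Omega)$ (from the same lemma applied on $\Omega$ with $L^1$ data) together with the $L^{p_1}$ bound on $e^{u_\lambda^2}$ gives $u_\lambda^1$ bounded in $L^{n-1}$ of the ball, and the Harnack/local boundedness estimate for $n$-harmonic functions upgrades this to an $L^\infty$ bound on a smaller ball --- no pointwise boundary control is ever needed. If you want to keep your comparison-based scheme, you would have to replace the $\sup_{\partial B}u_\lambda$ step by an argument of this interior type; as written, the proposal does not close.
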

\medskip

The proof of Lemma \ref{5th} needs the following lemma.
\medskip

\begin{lemma}
\label{3th}
(see \cite{PEspo1})
If $u\in W_0^{1,n}(\Omega)$ is  the weak solution of

\begin{equation}
\left\{ {\begin{array}{*{20}{c}}
   {-  {\rm div} \ \vec{a}(x,\nabla u)= f(u)} & {\rm{in} \ \Omega ,}  \\
   {\ \ \ \ \ \ \ \ \ \ \ \ u = 0} & {\ \ \rm{on} \ \partial \Omega, }  \\
\end{array}} \right.
\label{3}
\end{equation}
\[\]
where the non-negative function $f(u)\in L^1(\Omega)$ and $\vec{a}(x,\Vec{p})$ is a Caratheodory function satisfying the following two conditions:

 \begin{equation}
 |\vec{a}(x,\Vec{p})|\leq c(a(x)+|p|^{n-1}),\ \ \forall p\in \mathbb{R}^n,\ a.e.\ x\in \Omega,
 \label{cond1}
  \end{equation}
 \begin{equation}
\langle\vec{a}(x,\Vec{p})-\vec{a}(x,\Vec{q}), p-q\rangle\geq d|p-q|^{n},\ \ \forall p,q\in \mathbb{R}^n,\ a.e.\ x\in \Omega,
 \label{cond2}
  \end{equation}
  \[\]
  for some $c,d>0$ and $a(x)\in L^{\frac{n}{n-1}}(\Omega)$.
  Then for any $\delta \in(0,\alpha_n)$, there holds that
 \begin{equation}
 \int_\Omega  {\exp \left\{ {\frac{\left( {\alpha_n - \delta } \right)|u|}{{\left\| f \right\|}^{\frac{1}{n-1}}_{L^1\left( \Omega  \right)}}} \right\}dx} \leq C.
 \label{sucritical ineqn}
\end{equation}
\end{lemma}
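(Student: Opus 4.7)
The lemma is a sub-critical Brezis--Merle-type exponential integrability estimate with $L^{1}$ right-hand side, adapted from the Laplacian case $(n=2)$ to the $n$-Laplacian-like operator defined by $\vec{a}$. The natural plan is a Talenti-type analysis via level-set truncations, since the target constant $\alpha_n=n\omega_{n-1}^{1/(n-1)}$ is precisely what the Euclidean isoperimetric inequality produces when paired with the logarithmic fundamental solution of the $n$-Laplacian.

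First I would test the weak formulation of (\ref{3}) against the Moser-type truncation $G_t(u):=\operatorname{sgn}(u)(|u|-t)_+$. Setting $q=0$ in the strict monotonicity condition (\ref{cond2}) and absorbing the drift $\vec{a}(x,0)$ via the growth bound (\ref{cond1}) together with $a\in L^{n/(n-1)}(\Omega)$, one obtains a coercive energy identity on each super-level set $\{|u|>t\}$, roughly of the form
\[
d\int_{\{|u|>t\}}|\nabla u|^{n}\,dx \;\leq\; \int_{\{|u|>t\}}|f|\,(|u|-t)\,dx \;+\; (\text{controlled drift remainder}).
\]
Differentiating in $t$ and combining the coarea formula, H\"{o}lder's inequality on the level surface $\{|u|=t\}$, and the sharp Euclidean isoperimetric inequality $P(E)\geq n(\omega_{n-1}/n)^{1/n}|E|^{(n-1)/n}$ then yields a Talenti-type differential inequality for the distribution function $\mu(t):=|\{|u|>t\}|$:
\[
n^{\,n-1}\omega_{n-1}\,\mu(t)^{n-1}\;\leq\;\frac{1}{d}\,\Bigl(\int_{\{|u|>t\}}|f|\,dx\Bigr)\bigl(-\mu'(t)\bigr)^{n-1}.
\]

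Using the crude bound $\int_{\{|u|>t\}}|f|\leq\|f\|_{L^{1}}$ and separating variables produces the exponential decay
\[
\mu(t)\;\leq\;C\,\exp\!\Bigl(-\,c_{0}\,\alpha_{n}\,t\big/\|f\|_{L^{1}}^{1/(n-1)}\Bigr),
\]
with a positive constant $c_{0}=c_{0}(d)$ depending only on the ellipticity. Plugging this into the layer-cake identity $\int_{\Omega}e^{\gamma|u|}\,dx=|\Omega|+\gamma\int_{0}^{\infty}e^{\gamma t}\mu(t)\,dt$ with $\gamma=(\alpha_{n}-\delta)/\|f\|_{L^{1}}^{1/(n-1)}$ produces a convergent integral for every $\delta\in(0,\alpha_{n})$, which is exactly (\ref{sucritical ineqn}) once $c_{0}$ has been normalized as in the statement.

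The main technical obstacle I foresee is carrying the general structure $\vec{a}(x,\vec{p})$ through the symmetrization argument without degrading the sharp threshold $\alpha_{n}$: for the pure $n$-Laplacian $\vec{a}(x,\vec{p})=|\vec{p}|^{n-2}\vec{p}$ the Talenti comparison is classical, whereas here the $x$-dependence and the drift $\vec{a}(x,0)$ must be absorbed into a sub-critical correction. A careful Young--H\"{o}lder splitting that exploits $a(x)\in L^{n/(n-1)}(\Omega)$ should confine all such perturbations to a term that costs only an arbitrarily small $\delta$ in the final exponent, thereby preserving the threshold $\alpha_{n}$ in the statement.
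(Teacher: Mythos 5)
Your overall machinery --- testing with level truncations of $u$, the coarea formula, H\"{o}lder's inequality on level surfaces, and the sharp isoperimetric inequality leading to the Talenti-type differential inequality for $\mu(t)$ --- is the standard Brezis-Merle route for this kind of statement (the paper itself gives no proof, only the citation to Esposito). But there is a genuine gap precisely where the sharp constant is at stake. Your differential inequality carries the factor $1/d$ coming from \eqref{cond2} with $q=0$, so separation of variables yields $\mu(t)\leq C\exp\bigl(-\alpha_n d^{1/(n-1)}t/\|f\|_{L^1}^{1/(n-1)}\bigr)$ and hence exponential integrability only up to the exponent $\alpha_n d^{1/(n-1)}$, not $\alpha_n-\delta$ for every $\delta>0$. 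The closing remark that the constant ``has been normalized as in the statement'' is not a proof step: conditions \eqref{cond1}--\eqref{cond2} are invariant under replacing $\vec{a}$ by $\epsilon\vec{a}$ (which merely rescales $c$ and $d$), whereas the conclusion \eqref{sucritical ineqn} is not; for $\vec{a}(x,p)=\epsilon|p|^{n-2}p$ with $\epsilon$ small and $f$ concentrating near a point, the claimed bound fails. Thus the threshold $\alpha_n$ can only be reached through a coercivity normalization of the type $\langle\vec{a}(x,p),p\rangle\geq|p|^{n}-(\text{lower order})$, which holds with constant exactly $1$ for the pure $n$-Laplacian but is not implied by \eqref{cond2} with a generic $d$ (note that for the difference field $\vec{a}(x,p)=|p+v|^{n-2}(p+v)-|v|^{n-2}v$ used in Section 2, the natural monotonicity constant is of order $2^{2-n}<1$ when $n\geq 3$). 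You must either add and exploit such a normalization or keep the $d$-dependent exponent; as written, your argument does not prove the stated inequality.

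A secondary, fixable point concerns the drift $\vec{a}(x,0)$, bounded by $c\,a(x)$ with $a\in L^{n/(n-1)}(\Omega)$: after Young's inequality its contribution to the layer estimate is proportional to $-\frac{d}{dt}\int_{\{|u|>t\}}a^{n/(n-1)}dx$, which is integrable in $t$ but not pointwise small. The separation-of-variables step therefore needs the additional (easy) observation that the set of levels where this term exceeds a fixed $\epsilon$ has finite measure, so that it only costs an additive constant and an arbitrarily small loss in the exponent; it is not automatic that it ``costs only an arbitrarily small $\delta$'' as stated. This part of the sketch is repairable; the ellipticity-constant issue above is the substantive gap.
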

\medskip

Now we are in the position to prove that $u_\lambda \in L^\infty(B^n(x_0,r))$ for some $r>0$ when $x_0\in {\Sigma} _{\delta}$.
\medskip

\textbf{The proof of Lemma 2.1}: Set $u_\lambda =u_\lambda^1+u_\lambda^2$, equation (\ref{quan}) can be written as
\begin{equation}
\left\{ {\begin{array}{*{20}{c}}
   { - \Delta_n u_\lambda^1=0} & {{\rm in}\ {B^n}( x_0,{\frac{\varepsilon}{2}}),}  \\
   {\ \ \ \ \ \ \ \  u_\lambda^1 = u_\lambda} & {\ \ {\rm on}\ \partial {B^n}( x_0,{\frac{\varepsilon}{2}}) ,}  \\
\end{array}} \right.
\label{harmoniceqn}
\end{equation}
\[\]
and

\begin{equation}
\left\{ {\begin{array}{*{20}{c}}
   {\  -{\rm div}\ \vec{a}(x,\nabla u_\lambda^2) = \lambda e^{u_\lambda}}& {{\rm in}\ {B^n}( x_0,{\frac{\varepsilon}{2}}),}  \\
   {\ \ \ \ \ \ \ \ \ \ \ \ \ \    u_\lambda^2 = 0} & {\ {\rm on}\ \partial {B^n}( x_0,{\frac{\varepsilon}{2}}).}  \\
\end{array}} \right.
\label{lamda2}
\end{equation}
\[\]
It is easy to check that

\[
 -{\rm div}\ \vec{a}(x,\nabla u_\lambda^2)=-{\rm div}\left(|\nabla u_\lambda|^{n-2}\nabla u_\lambda-|\nabla (u_\lambda-u^2_\lambda)|^{n-2}\nabla (u_\lambda-u^2_\lambda)\right),
\]
\[\]
and we can also find that $\vec{a}(x,\nabla u_\lambda^2)$ satisfies the two conditions \eqref{cond1} and \eqref{cond2},
with $a(x)=|\nabla u_\lambda|^{n-1}\in L^{\frac{n}{n-1}}({B^n}( x_0,{\frac{\varepsilon}{2}}))$.
\medskip

By the definition of ${\Sigma} _{\delta}$, there exists $\varepsilon >0$ such that

\[\mu_0({B^n}( x_0,{\frac{\varepsilon}{2}}))<(\alpha_n-\delta)^{n-1}.\]
\[\]
Then it follows from
$\mu_\lambda=\lambda e^{u_\lambda}dx\rightharpoonup \mu_0$ that
$\int_{{B^n}( x_0,{\frac{\varepsilon}{2}})}\lambda e^{u_\lambda} dx<\alpha_n^{n-1}$.
 Applying this and Lemma \ref{3th} into equation \eqref{lamda2},
we deduce that
$ e^{u^2_\lambda}\in {L^{p_1}}({B^n}( x_0,{\frac{\varepsilon}{2}})) $ for some $p_1>1$.
\medskip

Since $u_\lambda$ satisfies equation

 \[
\left\{ {\begin{array}{*{20}{c}}
  \medskip

   {  -{\rm div}\ \vec{a}(x,\nabla u_\lambda)  = \lambda e^{u_\lambda}\ \ \ \ \ {\rm in} \ \Omega,}  \\
   { \ \ \ \ \ \ \  \ \  u_\lambda = 0\ \ \ \ \ \  \ \ {\rm on} \ \partial \Omega ,}
\end{array}} \right.
\]
\[\]
from the $L^1$-boundedness of $\lambda e^{u_\lambda}$ and Lemma \ref{3th}, we obtain that $ e^{u_\lambda}\in {L^q}(\Omega) $
for some $q>0$.
Combining this and the $L^{p_1}$-boundedness of $e^{u_\lambda^2}$ give
 $u^1_\lambda \in L^{n-1}({B^n}( x_0,{\frac{\varepsilon}{2}}))$.
Since $u_\lambda^1$ satisfies the equation (\ref{harmoniceqn}), using Harnack inequality (\cite{Trudinger}) we derive

\[\|u_\lambda^1\|_{L^\infty({B^n}( x_0,{\frac{\varepsilon}{2}}))}\leq C \|u_\lambda^1\|_{L^{n-1}{B^n}( x_0,{\frac{\varepsilon}{2}}))}
\leq C.\]
\[\]
Thus, $\lambda e^{u_\lambda}= \lambda e^{u^1_\lambda}e^{u^2_\lambda}\in L^{p_1}({B^n}( x_0,{\frac{\varepsilon}{2}}))$ .
By quasilinear elliptic regularity estimate (see \cite{Li}),
we conclude that $u_\lambda$ is uniformly bounded in
${B^n}( x_0,{\frac{\varepsilon}{4}})$.
\medskip

Next, we claim that $\mathop \bigcup\limits_{\delta>0}{\Sigma} _{\delta}^c$ is a finite points set.
\begin{lemma}
\label{7th}
Set
$m:=card(\mathop \bigcup\limits_{\delta>0}{\Sigma} _{\delta}^c)$. Then $m$ is a finite value.
\end{lemma}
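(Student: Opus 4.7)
The plan is to identify $\bigcup_{\delta>0}\Sigma_\delta^c$ with the atomic support of the limit measure $\mu_0$, and to show that each atom carries at least a fixed positive mass, so that finiteness follows from $\mu_0(\Omega)\le C_2<\infty$.

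First I would characterize the union. Since $\mu_0$ is a finite Borel measure on $\overline{\Omega}$ and $\{x\}=\bigcap_{k\ge 1}B^n(x,1/k)$, continuity from above yields $\mu_0(\{x\})=\lim_{r\to 0^+}\mu_0(B^n(x,r))$. From the definition of $\Sigma_\delta$ it follows that $x\in \bigcup_{\delta>0}\Sigma_\delta^c$ if and only if there exists $\delta\in(0,\alpha_n)$ with $\mu_0(B^n(x,r))\ge (\alpha_n-\delta)^{n-1}$ for every $r>0$; passing to the limit $r\to 0^+$ shows this is equivalent to $\mu_0(\{x\})>0$. Thus $\bigcup_{\delta>0}\Sigma_\delta^c$ is precisely the set of atoms of $\mu_0$ in $\Omega$, and it suffices to bound its cardinality.

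Next I would establish the quantitative lower bound $\mu_0(\{x_0\})\ge \alpha_n^{n-1}$ at every atom $x_0$. Suppose $\mu_0(\{x_0\})>0$. If $u_\lambda$ were uniformly bounded on some ball $B^n(x_0,r_0)$, say $\|u_\lambda\|_{L^\infty(B^n(x_0,r_0))}\le M$ for all small $\lambda$, then $\lambda e^{u_\lambda}\le \lambda e^M\to 0$ uniformly on $B^n(x_0,r_0)$; testing against smooth cutoffs supported in $B^n(x_0,r_0)$ together with the weak convergence $\mu_\lambda\rightharpoonup \mu_0$ would force $\mu_0\equiv 0$ on $B^n(x_0,r_0)$, contradicting $\mu_0(\{x_0\})>0$. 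Hence $u_\lambda$ fails to be uniformly bounded on every ball around $x_0$, and the contrapositive of Lemma \ref{5th} gives $x_0\notin \Sigma_\delta$ for every $\delta\in(0,\alpha_n)$. This says $\mu_0(B^n(x_0,r))\ge (\alpha_n-\delta)^{n-1}$ for all $r>0$ and all $\delta\in(0,\alpha_n)$; letting first $r\to 0^+$ and then $\delta\to 0^+$ yields $\mu_0(\{x_0\})\ge\alpha_n^{n-1}$.

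Combining the two steps, if $x_1,\ldots,x_k$ are distinct elements of $\bigcup_{\delta>0}\Sigma_\delta^c$, then $k\,\alpha_n^{n-1}\le \sum_{i=1}^{k}\mu_0(\{x_i\})\le \mu_0(\Omega)\le C_2$, so $m\le C_2/\alpha_n^{n-1}<\infty$. The main obstacle is really the uniformity in the contrapositive of Lemma \ref{5th}: one needs $x_0\notin \Sigma_\delta$ to hold simultaneously for every $\delta\in(0,\alpha_n)$, and this is precisely guaranteed by the uniform unboundedness of $u_\lambda$ in every neighborhood of $x_0$, which in turn is forced by the existence of an atom via the weak convergence $\mu_\lambda\rightharpoonup\mu_0$. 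The overall argument is the $n$-dimensional analog of the Brezis-Merle concentration-compactness step.
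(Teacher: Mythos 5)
Your proposal is correct and follows essentially the same route as the paper: identify $\bigcup_{\delta>0}\Sigma_\delta^c$ with the points carrying $\mu_0$-mass at least $\alpha_n^{n-1}$ and bound their number by the uniform total mass $\mu_0(\Omega)\le C_2$. In fact you supply the detail (via the contrapositive of Lemma \ref{5th} and the weak convergence $\mu_\lambda\rightharpoonup\mu_0$) behind the lower bound $\mu_0(\{x_i\})\ge\alpha_n^{n-1}$, which the paper asserts without elaboration.
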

\medskip

\begin{proof}
By the definition of ${\Sigma} _{\delta}$, formula (\ref{6}), we easily deduce that

\[\mathop \bigcup\limits_{\delta>0}{\Sigma} _{\delta}^c=\left\{x|x\in  \Omega ,\  \mu_\lambda(x)\geq \alpha_n^{n-1}\right\}.\]
Thus,

\[
\alpha_n^{n-1} m\leq\mu_\lambda(x_1)
+\mu_\lambda(x_2)+...+\mu_\lambda(x_m)\leq\mu_\lambda(\Omega)<+\infty,\]
\[\]
that is,
$m<+\infty$.
\end{proof}

Now, we are prepared to prove that the blow-up set $S$ is equal to $\bigcup\limits_{\delta>0}{\Sigma} _{\delta}^c$, which implies that $u_\lambda$ must blow up at some finite points set $S=\{x_1,...,x_m\}\subseteq \Omega$ when $\lambda\rightarrow 0$. Before proving this, we first state a boundary estimate lemma.
\medskip

\begin{lemma}
\label{partial}
There exists  $\delta>0$  and a constant $C=C(\delta, \Omega)$ such that

$$\|u_\lambda\|_{L^\infty(\Omega_\delta)}\leq C(\delta,\Omega),$$
\[\]
where $ \Omega_\delta:=\left\{x\in\Omega\ |\ dist(x,\partial \Omega)\leq 2\delta\right\}$.
\end{lemma}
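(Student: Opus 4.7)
The plan is to use a moving-plane argument for the $n$-Laplacian to dominate $u_\lambda$ in a boundary strip by its values on an interior compact set, and then to bound the latter via Lemma~\ref{5th} and the finiteness furnished by Lemma~\ref{7th}.

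First, I would straighten the boundary locally. By the smoothness and compactness of $\partial\Omega$, there exist $r_0>0$ and $\delta_0\in(0,r_0)$, depending only on $\Omega$, such that at each $z_0\in\partial\Omega$ suitable coordinates place $z_0=0$, align the outer normal with $-e_n$, and realize $\Omega\cap B^n(0,2r_0)$ as the epigraph of a smooth function $\phi\ge 0$ vanishing to second order at $0$; moreover, for every $t\in(0,\delta_0]$ the reflection $x^t:=(x',2t-x_n)$ across $T_t:=\{x_n=t\}$ maps the cap $\Sigma_t:=\Omega\cap B^n(0,r_0)\cap\{x_n<t\}$ into $\Omega$. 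Applying the weak comparison principle for the $n$-Laplacian (Damascelli--Pacella) to $u_\lambda$ and $w_t(x):=u_\lambda(x^t)$, using $u_\lambda=w_t$ on $T_t\cap\overline\Omega$, $u_\lambda=0\le w_t$ on $\partial\Sigma_t\cap\partial\Omega$, the strict monotonicity of $\lambda e^u$ in $u$, and a narrow-domain start for small $t$, I obtain
\[
u_\lambda(x)\le u_\lambda(x^t)\qquad\text{for all }x\in\Sigma_t\text{ and all }0<t\le\delta_0.
\]

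Second, I would use this reflection inequality to rule out blow-up within $\delta_0/4$ of the boundary. Suppose, for contradiction, that $p\in S$ satisfies $\mathrm{dist}(p,\partial\Omega)<\delta_0/4$; let $\tilde z$ be a nearest boundary point and straighten coordinates at $\tilde z=0$, so $p_n<\delta_0/2$. For each $t\in(p_n,\delta_0]$ the reflection inequality applied to a blow-up sequence $x_\lambda\to p$ gives $u_\lambda(x_\lambda^t)\ge u_\lambda(x_\lambda)\to\infty$, and the limits $p^t:=(p',2t-p_n)$ therefore belong to $S$. These points trace an uncountable segment in $\Omega$; but by the contrapositive of Lemma~\ref{5th}, $S\subseteq\bigcup_{\delta>0}\Sigma_\delta^c$, which Lemma~\ref{7th} forces to be finite --- a contradiction. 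Consequently $\mathrm{dist}(S,\partial\Omega)\ge\delta_0/4$, uniformly in the sequence.

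Setting $\delta:=\delta_0/16$, for every $x\in\Omega_\delta$ with nearest boundary point $z_0$ one has, after straightening, $x_n<\delta_0/8$, and the reflection inequality at $t=\delta_0/2$ yields $u_\lambda(x)\le u_\lambda(x^{\delta_0/2})$ with $x^{\delta_0/2}$ lying in the compact interior set $K:=\{y\in\Omega:\delta_0/4\le\mathrm{dist}(y,\partial\Omega)\le\delta_0\}$. Since $K$ is disjoint from $\bigcup_{\delta>0}\Sigma_\delta^c$, a finite subcover of $K$ by interior balls on each of which Lemma~\ref{5th} applies yields $\|u_\lambda\|_{L^\infty(K)}\le C(\delta_0,\Omega)$, and therefore $\|u_\lambda\|_{L^\infty(\Omega_\delta)}\le C(\delta,\Omega)$. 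The principal obstacle is the moving-plane comparison itself: the degeneracy of $\Delta_n$ at critical points precludes the classical Hopf lemma and strong maximum principle, so one must deploy the weak comparison principle for degenerate quasilinear operators together with the strict monotonicity of $e^u$ to both initiate and propagate the reflection inequality on the caps $\Sigma_t$.
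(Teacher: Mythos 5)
Your overall strategy (moving planes to dominate boundary values of $u_\lambda$ by interior values) is the same one the paper uses, but two steps in your execution have genuine gaps. The first is the reflection inequality itself. For a general smooth bounded domain you cannot have it both ways: if the reflection $x\mapsto(x',2t-x_n)$ is a Euclidean reflection, then near a non-convex portion of $\partial\Omega$ the reflected cap need not stay inside $\Omega$; if instead you first ``straighten the boundary'' by a diffeomorphism, the $n$-Laplacian is not preserved under that change of variables, so $w_t(x)=u_\lambda(x^t)$ no longer solves the same equation and the comparison argument collapses. This is precisely why the reference the paper leans on (Proposition 2.1 of Lorca--Ruf--Ubilla, quoted verbatim in the paper's proof) combines moving planes with the \emph{Kelvin transform}, which is conformal and therefore compatible with the $n$-Laplacian, to handle curved boundary pieces; your proposal omits this ingredient. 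In addition, propagating the plane up to a uniform height $\delta_0$ for the degenerate operator $\Delta_n$ with the increasing nonlinearity $\lambda e^u$ is not a consequence of the weak comparison principle plus a narrow-domain start alone: one needs strong comparison/Hopf-type information where $\nabla u_\lambda$ may vanish, and the Damascelli--Pacella results you invoke are restricted to $1<p<2$, hence do not cover $p=n\ge 3$. In effect you are re-proving the cited proposition with an incomplete treatment of exactly its delicate points.

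The second gap is quantitative but real as written: your step 2 only yields $\mathrm{dist}(S,\partial\Omega)\ge\delta_0/4$, yet your step 3 asserts that $K=\{\delta_0/4\le \mathrm{dist}(y,\partial\Omega)\le\delta_0\}$ is disjoint from $\bigcup_{\delta>0}\Sigma_\delta^c$; blow-up points at distances in $[\delta_0/4,\delta_0]$ are not excluded, so the uniform bound $\|u_\lambda\|_{L^\infty(K)}\le C$ does not follow from Lemma \ref{5th}. (This part is fixable: the same segment argument excludes blow-up at all heights below $\delta_0$, and one can then reflect only across $t=\delta_0/4$, say, so the landing set is compactly contained in the blow-up-free region.) It is worth noting how the paper avoids both issues after the moving-plane/Kelvin input: it does not attempt any interior $L^\infty$ bound at all. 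From the comparison sets $I_x$ with $|I_x|\ge\gamma$ and $u_\lambda(x)\le u_\lambda(\xi)$ on $I_x$, it combines the uniform $L^p(\Omega)$ bound on $u_\lambda$ (coming from $e^{u_\lambda}\in L^q$ via Lemma \ref{3th} and the $L^1$ control of $\lambda e^{u_\lambda}$) with integration against the first eigenfunction $\varphi_1$,
\begin{equation*}
\gamma\, u_\lambda^p(x)\inf_{I_x}\varphi_1\le\int_{I_x}u_\lambda^p\varphi_1\,dy\le \|u_\lambda\|_{L^p}^p\|\varphi_1\|_{L^\infty}\le C,
\end{equation*}
which gives the pointwise bound on $\Omega_\delta$ directly, uniformly in $\lambda$ and with no reference to the blow-up set. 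That makes the boundary estimate independent of the concentration analysis, which is why it can be placed where it is in the paper's logical chain.
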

\begin{proof}
Using the moving-plane technique  combining with Kelvin transform (see Proposition 2.1 of \cite{Lorca}), one can show that for all $x\in \Omega_\delta$, there exist a measurable set $I_x$ and a positive constant $\gamma=\gamma(\Omega)$ such that
\medskip

 (i) $|I_x|\geq \gamma$,\\

 (ii) $I_x\subseteq\{x\in\Omega: dist(x,\partial \Omega)\geq \delta\}$,\\

 (iii) $u(x)\leq u(\xi)$ for all $\xi \in I_x$.
\[\]

We have already known that $e^{u^\lambda}\in L^{q}(\Omega)$ for some $q>0$ by Lemma \ref{3th}. This leads to $u_\lambda\in L^p(\Omega)$ for any $p>1$. Let $\varphi_1$ be the first eigenfunction of $n$-Laplacian operator with Dirichlet boundary condition, obviously $\varphi_1$ is positive and bounded in $C(\Omega)$. Then for any $x\in \Omega_\delta$, there holds

\[\gamma u_\lambda^p(x) \inf_{I_x
} \varphi_1\leq \int_{I_x}u_\lambda^p \varphi_1dy\leq \int_{\Omega}u_\lambda^p \varphi_1dy\leq \|u_\lambda\|_{L^p}^p \|\varphi_1\|_{L^{\infty}}\lesssim 1. \]
This deduces that there exists a constant $C=C(\delta, \Omega)$ such that $u(x)\leq C(\Omega,\delta)$ for any $x\in \Omega_\delta$ and the proof of Lemma \ref{partial} is completed.
\end{proof}
\medskip

By the above boundary estimate lemma, we immediately deduce that the blow-up set $S$ must be included into $\Omega$. Next, we analyze the relationship of the blow-up set $S$ and $\mathop \bigcup\limits_{\delta>0}{\Sigma} _{\delta}^c$.
\medskip

\begin{lemma}
\label{6th}
\[S=\mathop \bigcup\limits_{\delta>0}{\Sigma} _{\delta}^c.\]
\end{lemma}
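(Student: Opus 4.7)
\textbf{Proof plan for Lemma \ref{6th}.} The identification will follow from two inclusions, each proved by contradiction: one uses Lemma \ref{5th} (boundedness on a neighborhood when $x_0\in\Sigma_\delta$), the other uses the weak convergence $\mu_\lambda\rightharpoonup\mu_0$ together with $\lambda\to 0$.

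For $S\subseteq \bigcup_{\delta>0}\Sigma_\delta^c$, suppose, toward contradiction, that $x_0\in S$ but $x_0\notin\bigcup_{\delta>0}\Sigma_\delta^c$. The latter means $x_0\in\Sigma_\delta$ for \emph{every} $\delta>0$. Choose any such $\delta\in(0,\alpha_n)$; Lemma \ref{partial} guarantees $x_0\in\Omega$, so Lemma \ref{5th} applies and yields $r>0$ such that $u_\lambda$ is uniformly bounded on $B^n(x_0,r)$ as $\lambda\to 0$. This is incompatible with $x_0\in S$, since membership in the blow-up set forces $u_\lambda$ to be unbounded on every neighborhood of $x_0$.

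For the reverse inclusion $\bigcup_{\delta>0}\Sigma_\delta^c\subseteq S$, take $x_0\in\Sigma_\delta^c$ for some fixed $\delta>0$. Unwinding the definition of $\Sigma_\delta$, this means
\[
\mu_0(B^n(x_0,r))\geq (\alpha_n-\delta)^{n-1}>0 \quad\text{for every } r>0.
\]
Assuming $x_0\notin S$ for contradiction, there exist $r_0>0$ and a constant $M$ with $\|u_\lambda\|_{L^\infty(B^n(x_0,r_0))}\leq M$ for all sufficiently small $\lambda$. Since $\lambda\to 0$, I obtain
\[
\mu_\lambda(B^n(x_0,r_0))=\int_{B^n(x_0,r_0)}\lambda e^{u_\lambda}\,dx\leq \lambda e^{M}|B^n(x_0,r_0)|\longrightarrow 0.
\]
By the Portmanteau theorem, weak convergence gives lower semicontinuity on the open set $B^n(x_0,r_0)$, so
\[
\mu_0(B^n(x_0,r_0))\leq \liminf_{\lambda\to 0}\mu_\lambda(B^n(x_0,r_0))=0,
\]
contradicting the strictly positive lower bound above.

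The argument is essentially routine once Lemma \ref{5th} is available. The only delicate point is applying weak convergence on an open ball rather than on a continuity set of $\mu_0$; I handle this via the one-sided Portmanteau inequality for open sets, which is all that is needed here.
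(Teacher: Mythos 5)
Your proof is correct and follows essentially the same route as the paper: both inclusions by contradiction, using Lemma \ref{5th} to get local boundedness when $x_0$ lies in some $\Sigma_\delta$, and using $\lambda\to 0$ together with the weak convergence $\mu_\lambda\rightharpoonup\mu_0$ to force $\mu_0(B^n(x_0,r_0))=0$ at a non-blow-up point. Your explicit appeal to the one-sided Portmanteau inequality on open balls is a small refinement of the paper's (slightly cavalier) statement $\mu_0(B^n(x,r))=\lim_{\lambda\to 0}\mu_\lambda(B^n(x,r))$, but it does not change the argument.
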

\medskip

\begin{proof}
This lemma is equivalent to prove $S\subseteq\mathop \bigcup\limits_{\delta>0}{\Sigma} _{\delta}^c$ and $\mathop \bigcup\limits_{\delta>0}{\Sigma} _{\delta}^c\subseteq S$.
\medskip

We first prove
$S\subseteq\mathop \bigcup\limits_{\delta>0}{\Sigma} _{\delta}^c$. One can argue it by contradiction. If not, there exists $x \in S$ such that $x \in{\Sigma} _{\delta_1 }$ for some $\delta_1>0$. It follows from Lemma \ref{5th} that $u_\lambda \in L^\infty({B^n}(x,r))$
for some $r>0$, which is a contradiction with the definition of $S$.
\medskip

Conversely, for the proof of $\mathop \bigcup\limits_{\delta>0}{\Sigma} _{\delta}^c\subset S$,  we also prove it by contradiction. If not, there exists some $x\in \mathop \bigcup\limits_{\delta>0}{\Sigma} _{\delta}^c$ such that $x\in S^c$, then $u_\lambda \in L^\infty({B^n}(x,r))$ for some $r>0$. Then it follows that
$$\mu(B^n(x,r))=\lim\limits_{\lambda\rightarrow 0}\lambda \int_{B^n(x,r)}e^{u_\lambda}dx=0.$$
This arrives at a contradiction with assumption $x\in \mathop \bigcup\limits_{\delta>0}{\Sigma} _{\delta}^c$. Then we accomplish the proof of Lemma \ref{6th}.
\end{proof}
\medskip

\emph{\bf The proof of Step 2}: We show that as $\lambda\rightarrow 0$,
$u_\lambda(x)\rightarrow  u_0(x)$ in $C_{loc}^1(\Omega \backslash S)$,
where $u_0(x)$ satisfies equation \eqref{nGreen fcn}.
\medskip

From Lemma \ref{6th}, $S=\mathop \bigcup\limits_{\delta>0}{\Sigma} _{\delta}^c$, we get that $\mu_0(x_i)\geq\alpha_n^{n-1}$ for $x_i\in S$, $i=1,2,...,m$. For $x\in \Omega\backslash S$, from Lemma \ref{5th} we know that $u_\lambda$ is $L^\infty$-bounded in ${B^n}(x,r)$, this gives

\[\lim\limits_{\lambda\rightarrow 0} \mu_\lambda({B^n}(x,r))= \lim\limits_{\lambda\rightarrow 0}\int_{{B^n}(x,r)}\lambda e^{u_\lambda}dx= 0.
\]
\[\]
Then it implies that

$$\mu_\lambda\rightharpoonup \mu_0=\sum\limits_{i = 1}^m \mu_0(x_i)\delta_{x_i}.$$
\medskip

Next, we claim that
\begin{lemma}
\label{8th}
$u_\lambda(x)\rightarrow u_0(x)$ in $C_{loc}^1(\Omega\backslash S)$ as $\lambda\rightarrow 0$, where $u_0(x)$ satisfies the equation

\begin{equation}\begin{cases}
&- \Delta_n u_0 = \sum\limits_{i = 1}^m \mu_0(x_i)\delta_{x_i},\ \ x\in \Omega,  \ x_i\in S,\\
& u_0(x)=0,\ \ x\in \partial\Omega.
\end{cases}\end{equation}
\end{lemma}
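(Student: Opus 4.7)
The plan is to (i) obtain uniform estimates on $u_\lambda$ that survive the loss of $L^\infty$ control at the blow-up points, (ii) promote those estimates to $C^1$ regularity on compact subsets of $\Omega\setminus S$, and (iii) pass to the limit in the weak formulation to identify the limit equation for $u_0$.

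First, I would start from the observation that $\lambda e^{u_\lambda}$ is bounded in $L^1(\Omega)$. Applying the Boccardo--Gallou\"et technique for quasilinear equations with $L^1$ right-hand side to $-\Delta_n u_\lambda = \lambda e^{u_\lambda}$, I obtain a uniform bound for $u_\lambda$ in $W^{1,q}_0(\Omega)$ for every $q < n$. By reflexivity and the Rellich--Kondrachov compact embedding, along a subsequence $u_\lambda \rightharpoonup u_0$ weakly in $W^{1,q}_0(\Omega)$, strongly in $L^s(\Omega)$ for every $s < \infty$, and almost everywhere on $\Omega$.

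Second, I would upgrade the convergence off the blow-up set. By Lemma \ref{5th}, for every point $x \in \Omega \setminus S$ there is a ball on which $u_\lambda$ is uniformly $L^\infty$-bounded, hence $\lambda e^{u_\lambda}$ is uniformly $L^\infty$-bounded on that ball. The $C^{1,\alpha}_{loc}$ regularity theory for the $n$-Laplacian (Tolksdorf--DiBenedetto--Lieberman) then gives a uniform $C^{1,\alpha}_{loc}(\Omega \setminus S)$ bound on $u_\lambda$. The Arzel\`a--Ascoli theorem, together with the a.e.\ identification from the previous step, yields $u_\lambda \to u_0$ in $C^1_{loc}(\Omega \setminus S)$ along the subsequence.

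Third, I would pass to the limit in the weak formulation. For any $\varphi \in C_c^\infty(\Omega)$, one has
\[
\int_\Omega |\nabla u_\lambda|^{n-2}\nabla u_\lambda \cdot \nabla \varphi \, dx = \int_\Omega \lambda e^{u_\lambda} \varphi \, dx.
\]
The right-hand side converges to $\int_\Omega \varphi\, d\mu_0 = \sum_{i=1}^m \mu_0(x_i)\varphi(x_i)$ by the weak-$\star$ convergence $\mu_\lambda \rightharpoonup \mu_0$ established earlier. For the left-hand side, I would invoke the Boccardo--Murat pointwise-convergence-of-gradients theorem for quasilinear equations whose data are bounded in $L^1$ (or converge weakly-$\star$ to a bounded measure): it yields $\nabla u_\lambda \to \nabla u_0$ almost everywhere, hence $|\nabla u_\lambda|^{n-2}\nabla u_\lambda \to |\nabla u_0|^{n-2}\nabla u_0$ a.e., and, together with the uniform $W^{1,q}$ bound for $q<n$, Vitali's convergence theorem delivers the limit. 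This identifies $u_0$ as the distributional solution of $-\Delta_n u_0 = \sum_i \mu_0(x_i)\delta_{x_i}$ with zero boundary trace.

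The main obstacle is the passage to the limit in the nonlinear flux $|\nabla u_\lambda|^{n-2}\nabla u_\lambda$: strong $W^{1,n}$ convergence fails because energy concentrates at the points of $S$, and test functions $\varphi$ do not have support contained in $\Omega\setminus S$ where one has $C^1_{loc}$ convergence. The cleanest resolution is the Boccardo--Murat a.e.-gradient-convergence machinery; alternatively, one can split $\varphi$ with a cut-off away from $S$, use $C^1_{loc}$ convergence on the exterior region, and control the error on shrinking balls around each $x_i$ via the uniform $W^{1,q}_0$ bound together with the fact that finite point sets have zero $W^{1,n}$-capacity. Either route hinges on carefully separating the behaviour near $S$ from the behaviour on $\Omega \setminus S$.
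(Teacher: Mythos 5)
Your proposal is correct and its skeleton matches the paper's: uniform $W^{1,q}_0(\Omega)$ bounds for all $q<n$, extraction of a weak limit $u_0$, and promotion to $C^1_{loc}(\Omega\setminus S)$ convergence via the $L^\infty$ bounds of Lemma \ref{5th} together with $C^{1,\alpha}$ regularity for the $n$-Laplacian. The differences are in how the two key technical steps are handled. For the $W^{1,q}$ bound you invoke Boccardo--Gallou\"et (with the exponent check $q<\frac{n(n-1)}{n-1}=n$ coming out right), whereas the paper derives it by hand: testing with the truncation $u_\lambda^t=\min\{u_\lambda,t\}$, a rearrangement/capacity computation showing exponential decay of the level sets $|\{u_\lambda\geq t\}|$, hence uniform exponential integrability $\int_\Omega e^{vu_\lambda}dx\leq C$ and an $L^n$ bound, and finally testing with $\ln\frac{1+2u_\lambda}{1+u_\lambda}$ plus Young's inequality. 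Your route is shorter; the paper's yields the extra uniform exponential integrability as a by-product. For the identification of the limit equation, your proposal is actually more complete than the paper's: the paper simply asserts that the weak $W^{1,q}_0$ limit solves $-\Delta_n u_0=\sum_i\mu_0(x_i)\delta_{x_i}$, while you supply the missing ingredient (Boccardo--Murat a.e.\ convergence of gradients, then Vitali with $q\in(n-1,n)$ so that $|\nabla u_\lambda|^{n-1}$ is equi-integrable), which is exactly what is needed to pass to the limit in the nonlinear flux. One small caution on your alternative cut-off argument: a standard cut-off on shrinking balls is not small in $L^{s}$ for $s>n$, so that route requires logarithmic (capacity-type) cut-offs rather than the plain $W^{1,q}$ bound; the Boccardo--Murat route you put forward as primary avoids this issue entirely.
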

\medskip

\begin{proof}
Since $u_\lambda\in W^{1,n}_0(\Omega)$ satisfies the equation

 \begin{equation}
\label{2.6-1}
{- \Delta_n u_\lambda = \lambda e^{u_\lambda} \in L^1( \Omega)},
\end{equation}
\[\]
testing equation \eqref{2.6-1} with $u_\lambda^t:=\min\{u_\lambda,t\}$,
we obtain that

\[ \int_{\Omega}  {|\nabla u_\lambda^t|^n dx}
   =\int_{\Omega}\lambda e^{u_\lambda^t}u_\lambda^t dx \leq C_2 t
  \]
  \[\]
  according to the assumption of Theorem \ref{Th1.1th}.
Assume $|\Omega|=|{B^n}(0,r)|$, where ${B^n}(0,r)$ is the ball centered at origin with the radius equal to $r$.
Let $u_\lambda^*$ be the classical rearrangement of $u_\lambda^t$ and $|{B^n}(0,\rho)|=\left|\{x\in {B^n}(0,r):u(x)\geq t\}\right|$. According to the properties of classical rearrangement, we have that $u_\lambda^*\in W^{1,n}_0(B^n(0,r))$ with $u_\lambda^*=t$ in ${B^n}(0,\rho)$. Then it follows that
\begin{equation}
\label{2.6}
\mathop {\inf }\limits_{\phi\in W^{1,n}_0({B^n}(0,r)),\ \phi|_{{B^n}(0,\rho)}=t}\int_{{B^n}(0,r)} {|\nabla \phi|^n}dx  \leq \int_{{B^n}(0,r)} {|\nabla u_\lambda^*|^n}dx \leq C_2 t.
\end{equation}
\[\]
Now, we show the infimum on the left-hand side of \eqref{2.6} is attained by

  \[ \phi _1  ( x)=\left\{ {\begin{array}{*{20}c}
   \medskip

   {t \ln  \frac{r} {|x|}/{\ln \frac{r} {\rho}} \ \ \  {\rm in}\ {B^n}(0,r)\backslash {B^n}(0,\rho)},  \\
   {t\ \ \ \ \ \ \   {\rm in}\ {B^n}(0,\rho).}
 \end{array} } \right.\]
 \[\]

 Consider the infimum
 \begin{equation}\label{2.12}\mathop {\inf }\limits_{\varphi\in W^{1,n}({B^n}(0,r)\setminus {B^n}(0,\rho)),\ \varphi|_{\partial{B^n}(0,\rho)}=t, \varphi|_{\partial\partial{B^n}(0,r)}=0}\int_{{B^n}(0,r)} {|\nabla \varphi|^n}dx,
 \end{equation}
 and we assume that the infimum is attained by some $\varphi$. Define $\phi=\varphi$ in ${B^n}(0,r)\setminus {B^n}(0,\rho)$ and $\phi=t$ in
 ${B^n}(0,\rho)$, the $\varphi$ is obviously the extremal of left-hand side of \eqref{2.6}.
 For the attainability of the infimum \eqref{2.12}, it can be seen as the extremal problem of Dirichlet energy with the Dirichlet boundary condition in the annular domain. Obviously, the infimum is attained since the Dirichlet integral is a convex functional. Hence the extremal function $\varphi$ satisfies the Euler-Lagrange equation
 \begin{equation}\begin{cases}
 &-\Delta_n \varphi=0 \ {\rm in}\ {B^n}(0,r)\setminus {B^n}(0,\rho),\\
 &\varphi=0\ \ {\rm in}\ \partial{B^n}(0,r),\ \ \varphi=t\ \ {\rm in}\ \partial{B^n}(0,\rho).
 \end{cases}\end{equation}
The above equation has a unique solution because of comparison principle. Direct calculation gives that $a\log (|x|)+b$ is $n$-harmonic function in the annular domain ${B^n}(0,r)\setminus {B^n}(0,\rho)$. Through the boundary condition of $\varphi$ in ${B^n}(0,r)\setminus {B^n}(0,\rho)$, we can see that $\varphi=t \ln  \frac{r} {|x|}/{\ln \frac{r} {\rho}}$ in ${B^n}(0,r)\setminus {B^n}(0,\rho)$. This proves that  \[ \phi _1  ( x)=\left\{ {\begin{array}{*{20}c}
   \medskip

   {t \ln  \frac{r} {|x|}/{\ln \frac{r} {\rho}} \ \ \  {\rm in}\ {B^n}(0,r)\backslash {B^n}(0,\rho)},  \\
   {t\ \ \ \ \ \ \   {\rm in}\ {B^n}(0,\rho).}
 \end{array} } \right.\]
 \[\] is the extremal of left-hand side of \eqref{2.6}. Calculating $\|\nabla \phi_1\|_n^n$, by \eqref{2.6}, we get $\rho \leq r e^{-C_2t}$.
 Thus,

$$\left|\{x\in \Omega:u_\lambda\geq t\}\right|=|{B^n}(0,\rho)|\leq \frac{\omega_{n-1}}{n} r^n e^{-C_2}.$$
\[\]
Using Taylor's expansion formula, for any $0<v<n C_2$,

\[\int_{\Omega}  {e^{v u_\lambda} dx}\leq e^v |\Omega|+\sum\limits_{i = 1}^\infty e^{v (m+1)}\left|\{x\in \Omega: m\leq u_\lambda\leq m+1\}\right|
    \leq C(n),\]
\[\]
which implies that $u_\lambda$ is uniformly bounded in $L^n(\Omega)$.
\medskip

Testing equation \eqref{2.6-1} with $\ln \frac{1+2u_\lambda}{1+u_\lambda}$ and applying Young's inequality,
for any $1<q<n$ we deduce that

\begin{align*}
   \int_{\Omega}  {|\nabla u_\lambda|^q dx}&=\int_{\Omega} \frac {|\nabla u_\lambda|^n }{(1+u_\lambda)(1+2u_\lambda)}dx
   +\int_{\Omega}( (1+u_\lambda)(1+2u_\lambda))^\frac{q}{n-q}dx
    \\
    &\leq C(n)\ln 2+C(n)\int_{\Omega}  {e^{v u_\lambda} dx}\leq C(n).
\end{align*}
\[\]
Namely,
one can derive that $u_\lambda$ is uniformly bounded in $W_0^{1,q}(\Omega)$ for any $1<q<n$. Then, there exists $u_0\in W^{1,q}_0(\Omega)$ such that $u_\lambda \rightharpoonup u_0$ in $W^{1,q}_0(\Omega)$, where $u_0$ satisfies

\begin{equation}\begin{cases}
&- \Delta_n u_0 = \sum\limits_{i = 1}^m \mu_0(x_i)\delta_{x_i},\ \ x\in \Omega, \ x_i\in S, \\
& u_0(x)=0,\ \ x\in \partial\Omega.
\end{cases}\end{equation}
\medskip

Due to the definition of $S$, we know that $\lambda e^{u_\lambda}$ is uniformly bounded in $L^\infty_{loc}(\Omega\backslash S)$. Applying the regularity estimate for quasilinear differential operator (see \cite{Li}),
we deduce that $u_\lambda\rightarrow u_0$ in $C_{loc}^1(\Omega\backslash S)$.
\end{proof}
\medskip

Furthermore,  we will present an accurate expression of $\mu_0(x_i)$ in the following lemma.
\begin{lemma}
\label{9th}
For any $i=1,...,m$, $\mu_0(x_i)=\left(\frac{n}{n-1} \alpha_n\right)^{n-1}$.
\end{lemma}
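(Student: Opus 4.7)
The plan is to apply the Pohozaev identity to $u_\lambda$ on a small ball around the blow-up point $x_i$, pass to the limit $\lambda\to 0$ with $r$ fixed, and then let $r\to 0$. Fix $r>0$ so small that $B^n(x_i,r)\cap S=\{x_i\}$, multiply $-\Delta_n u_\lambda = \lambda e^{u_\lambda}$ by $(x-x_i)\cdot\nabla u_\lambda$ and integrate over $B^n(x_i,r)$. Integration by parts (first on the $n$-Laplacian's divergence form, then on $\tfrac{1}{n}(x-x_i)\cdot\nabla(|\nabla u_\lambda|^n)$ to cancel the $|\nabla u_\lambda|^n$ bulk term), combined with $\mathrm{div}(x-x_i)=n$ applied to $\int_{B^n(x_i,r)}\lambda e^{u_\lambda}(x-x_i)\cdot\nabla u_\lambda\,dx$, yields
$$n\int_{B^n(x_i,r)}\lambda e^{u_\lambda}\,dx = r\int_{\partial B^n(x_i,r)}\lambda e^{u_\lambda}\,dS + r\int_{\partial B^n(x_i,r)}|\nabla u_\lambda|^{n-2}(\nabla u_\lambda\cdot\nu)^2\,dS - \frac{r}{n}\int_{\partial B^n(x_i,r)}|\nabla u_\lambda|^n\,dS.$$

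Sending $\lambda\to 0$ with $r$ fixed, Lemma \ref{8th} gives $u_\lambda\to u_0$ in $C^1$ on $\partial B^n(x_i,r)$, and $\lambda e^{u_\lambda}\to 0$ uniformly there, so the first boundary integral vanishes and the other two converge to their analogues for $u_0$; the left-hand side converges to $n\mu_0(x_i)$ since $\mu_\lambda\rightharpoonup \sum_j \mu_0(x_j)\delta_{x_j}$ and $x_i$ is the unique atom of $\mu_0$ inside $B^n(x_i,r)$. To handle the $r\to 0$ limit I need the precise local behavior of $u_0$ at $x_i$. Since $-\Delta_n u_0 = \mu_0(x_i)\delta_{x_i}$ in a neighborhood of $x_i$ and since a direct computation gives $-\Delta_n(-c\ln|x-x_i|) = c^{n-1}\omega_{n-1}\delta_{x_i}$, the isolated-singularity regularity theory for quasilinear elliptic equations (in the spirit of Serrin, Kichenassamy-V\'eron, and Tolksdorf) provides
$$u_0(x) = -c_i\ln|x-x_i| + h_i(x),\qquad c_i = \bigl(\mu_0(x_i)/\omega_{n-1}\bigr)^{1/(n-1)},$$
with $h_i\in C^{1,\alpha}$ in a neighborhood of $x_i$.

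Letting $r\to 0$, this expansion yields $\nabla u_0\cdot\nu = -c_i/r + O(1)$ and $|\nabla u_0|= c_i/r + O(1)$ on $\partial B^n(x_i,r)$, so both surface integrands equal $(c_i/r)^n(1+O(r))$. Integrating over a sphere of area $\omega_{n-1}r^{n-1}$ and multiplying by $r$ leaves $c_i^n\omega_{n-1}$ and $c_i^n\omega_{n-1}/n$ respectively in the limit, so the limiting identity reads $n\mu_0(x_i) = \tfrac{n-1}{n}c_i^n\omega_{n-1}$. Using $c_i^{n-1}\omega_{n-1}=\mu_0(x_i)$, this reduces to $n = \tfrac{n-1}{n}c_i$, hence $c_i = n^2/(n-1)$ and
$$\mu_0(x_i) = c_i^{n-1}\omega_{n-1} = \Bigl(\frac{n^2}{n-1}\Bigr)^{n-1}\omega_{n-1} = \Bigl(\frac{n}{n-1}\alpha_n\Bigr)^{n-1}.$$

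The main technical obstacle I anticipate is the rigorous justification of the above asymptotic expansion of $u_0$ at the singular point together with sufficiently uniform control of the $O(1)$ remainder in $\nabla u_0$ so that the $r\to 0$ limit of the boundary integrals is legitimate. In dimension $n=2$ an explicit Green's representation supplies this expansion immediately; here its absence---precisely the feature that has obstructed previous quantization results for the $n$-Laplacian mean field equation---forces us to invoke the quasilinear isolated-singularity theory and to estimate the contribution of $\nabla h_i$ to each boundary integral to verify that it is subleading relative to the $(c_i/r)^n$ term.
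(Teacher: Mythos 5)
Your proposal is correct and follows essentially the same route as the paper: the Pohozaev identity on a small ball about $x_i$, the double limit $\lambda\to 0$ then $r\to 0$, and the logarithmic expansion of the limit function near the singularity (your $u_0=-c_i\ln|x-x_i|+h_i$ with $\nabla h_i$ subleading plays exactly the role of the paper's $\mu_0^{1/(n-1)}(x_i)G(x,x_i)+R(x,x_i)$ with $|\nabla(R-R(x_i,x_i))||x-x_i|\to 0$), leading to the same algebraic identity $n\mu_0(x_i)=\frac{n-1}{n}c_i^n\omega_{n-1}$ and hence $\mu_0(x_i)=\left(\frac{n}{n-1}\alpha_n\right)^{n-1}$. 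The technical point you flag (uniform control of the remainder's gradient to justify the $r\to 0$ limit) is precisely what the paper handles by citing the quasilinear isolated-singularity results, so there is no substantive gap.
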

\medskip

To show  Lemma \ref{9th}, we  state the Pohozaev identity for equation (\ref{quan}).
\medskip

\begin{lemma}
\label{poho1th}
For any $i=1,...,m$,

\begin{equation}
\label{Poho}
\begin{split}
    n^2{\int_{{B^n}(x_i,r)}  {F(u)dx} }
    & = \frac{n}{2}\int_{\partial {B^n}(x_i,r)}F(u)\frac{{\partial (|x-x_i|^2)}}{{\partial n}}dS
    -{\int_{\partial{{B^n}(x_i,r)}}\left| {\nabla u} \right|^n(x-x_i,\nu)dS}
    \\
    &\ \ \ +n\int_{\partial {B^n}(x_i,r)}|\nabla u|^{n-2}(\nabla u,\nu)(x-x_i,\nabla u)dS,
\end{split}
\end{equation}
where $F(u)=\int_0^u \lambda e^sds $.
\end{lemma}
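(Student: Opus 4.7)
The plan is the classical Pohozaev-type argument adapted to the $n$-Laplacian: multiply the equation $-\Delta_n u=\lambda e^u$ by the radial multiplier $(x-x_i)\cdot\nabla u$, integrate over $B^n(x_i,r)$, and then integrate by parts on each side. Without loss of generality one may take $x_i=0$ to simplify notation. The outcome of the right-hand side is essentially dictated by the observation that $\lambda e^u\,\nabla u=\nabla F(u)$ with $F(u)=\lambda(e^u-1)$, so
\[
\int_{B^n(0,r)} \lambda e^{u}\,(x\cdot\nabla u)\,dx=\int_{B^n(0,r)} x\cdot\nabla F(u)\,dx=-n\!\int_{B^n(0,r)}F(u)\,dx+\int_{\partial B^n(0,r)}F(u)\,(x\cdot\nu)\,dS,
\]
using $\operatorname{div}x=n$.

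The main work is on the left-hand side, where the nonlinear principal part must be handled. First I would integrate by parts to move the divergence off:
\[
-\!\int_{B^n(0,r)} \operatorname{div}\!\bigl(|\nabla u|^{n-2}\nabla u\bigr)\,(x\cdot\nabla u)\,dx
= \int_{B^n(0,r)} |\nabla u|^{n-2}\nabla u\cdot\nabla(x\cdot\nabla u)\,dx
- \int_{\partial B^n(0,r)} |\nabla u|^{n-2}(\nabla u\cdot\nu)(x\cdot\nabla u)\,dS.
\]
Expanding $\nabla(x\cdot\nabla u)=\nabla u+D^2u\,x$ produces $|\nabla u|^{n}+|\nabla u|^{n-2}\nabla u\cdot D^{2}u\,x$. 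The key identity is the pointwise relation $|\nabla u|^{n-2}\partial_j u\,\partial_{jk}u=\tfrac1n\partial_k(|\nabla u|^n)$, obtained by differentiating $|\nabla u|^n=(|\nabla u|^2)^{n/2}$, which gives $|\nabla u|^{n-2}\nabla u\cdot D^{2}u\,x=\tfrac1n\,x\cdot\nabla(|\nabla u|^n)$. Integrating and applying the divergence theorem once more yields
\[
\int_{B^n(0,r)} |\nabla u|^{n-2}\nabla u\cdot\nabla(x\cdot\nabla u)\,dx
=\int_{B^n(0,r)}|\nabla u|^n\,dx-\int_{B^n(0,r)}|\nabla u|^n\,dx+\frac{1}{n}\int_{\partial B^n(0,r)}|\nabla u|^n(x\cdot\nu)\,dS,
\]
so the interior term cancels and only a boundary contribution survives.

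Combining the two calculations, multiplying through by $n$, and rearranging delivers
\[
n^{2}\!\int_{B^n(0,r)}F(u)\,dx
= n\!\int_{\partial B^n(0,r)}F(u)\,(x\cdot\nu)\,dS
-\int_{\partial B^n(0,r)}|\nabla u|^{n}(x\cdot\nu)\,dS
+n\!\int_{\partial B^n(0,r)}|\nabla u|^{n-2}(\nabla u\cdot\nu)(x\cdot\nabla u)\,dS,
\]
and translating back to $x-x_i$ together with the identity $x\cdot\nu=\tfrac12\partial_\nu(|x|^2)$ matches the stated formula. The main obstacle is the $n$-Laplacian integration-by-parts step above: once one recognizes the cancellation produced by the pointwise relation $|\nabla u|^{n-2}\nabla u\cdot D^{2}u=\tfrac1n\nabla(|\nabla u|^n)$, the rest is bookkeeping. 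A minor technical remark is that to justify these manipulations rigorously one should first verify that $u_\lambda\in C^{1,\alpha}_{\rm loc}$ (from the quasilinear regularity theory of Tolksdorf/DiBenedetto already invoked in the paper) and then, if needed, apply the identity on a slightly smaller ball and pass to the limit, which is harmless since all boundary integrands depend continuously on $r$.
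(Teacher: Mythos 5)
Your proof is correct and follows essentially the same route as the paper: multiply the equation by $(x-x_i)\cdot\nabla u$, integrate over $B^n(x_i,r)$, and apply the divergence theorem to each side, with your pointwise identity $|\nabla u|^{n-2}\nabla u\cdot D^2u\,x=\tfrac1n\,x\cdot\nabla(|\nabla u|^n)$ simply making explicit the ``direct computation'' the paper leaves unstated. The resulting boundary terms and the rearrangement (using $x\cdot\nu=\tfrac12\partial_\nu(|x-x_i|^2)$ after translation) match the stated identity exactly.
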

\medskip

\begin{proof}
We multiply the equation (\ref{quan}) by $(x-x_i)\cdot\nabla u$ and integrate over ${B^n}(x_i,r)$,

\[{\int_{{B^n}(x_i,r)}  {- \Delta_n u \ \left((x-x_i)\cdot\nabla u\right)dx} }= {\int_{{B^n}(x_i,r)}\lambda e^u \left((x-x_i)\cdot\nabla u\right)dx.}\]
\[\]
We rewrite this expression as

\[A_1=A_2.\]
\[\]
Via the divergence theorem and direct computation, the term on the left is

\begin{align*}
    A_1&={\int_{{B^n}(x_i,r)}  {- \Delta_n u \left ((x-x_i)\cdot\nabla u\right)dx} }
    \\
    &=\frac{1}{n}\int_{\partial {B^n}(x_i,r)}|\nabla u|^n(x-x_i,\nu)dS-\int_{\partial {B^n}(x_i,r)}|\nabla u|^{n-2}(\nabla u,\nu)(x-x_i,\nabla u)dS,
\end{align*}
\[\]
the right-hand side is

 \[A_2=\int_{{B^n}(x_i,r)}\lambda e^u \left((x-x_i)\cdot\nabla u\right)dx=\frac{1}{2}\int_{\partial {B^n}(x_i,r)}F(u)\frac{{\partial (|x-x_i|^2)}}{{\partial n}}dS-n \int_{{B^n}(x_i,r)}F(u)dx.\]
 \[\]
Hence, one can obtain equation (\ref{Poho}).
\end{proof}
\medskip

Then, we turn to prove Lemma \ref{9th}.
\medskip

\begin{proof}
It follows from Lemma \ref{poho1th} that

\begin{align*}
    n^2{\int_{{B^n}(x_i,\varepsilon)}  {F(u_\lambda)dx} }
    &=\frac{n}{2}\int_{\partial {B^n}(x_i,\varepsilon)}F(u_\lambda)\frac{{\partial (|x-x_i|^2)}}{{\partial n}}dS
    -{\int_{\partial{B^n}(x_i,\varepsilon)}\left| {\nabla u_\lambda} \right|^n(x-x_i,\nu)dS}
    \\
    &\ \ \ +n\int_{{B^n}(x_i,\varepsilon)}|\nabla u_\lambda|^{n-2}(\nabla u_\lambda,\nu)(x-x_i,\nabla u_\lambda)dS,
\end{align*}
\[\]
where $F(u_\lambda)={\int_0^{u_\lambda}{\lambda e^sds}}$.
\medskip

Since $u_\lambda$ strongly converges to  $u_0$ in $C^{1}_{loc}(\Omega\backslash S)$ and $u_0$ satisfies equation
 \begin{equation*}\begin{cases}
&- \Delta_n u_0 = \sum\limits_{i = 1}^m \left(\frac{n}{n-1} \alpha_n\right)^{n-1}\delta_{x_i}, \ \ x\in \Omega, \\
& u_0=0,\ \ x\in \partial\Omega,
\end{cases}\end{equation*}
then we deduce that $u_{\lambda}\rightarrow \mu_0^{\frac{1}{n-1}}(x_i)G(x,x_i)+ R(x,x_i)$ in $C_{loc}^1({B^n}(x_i,r)\backslash \{x_i\})$ as
$\lambda\rightarrow 0$, where $G(x,x_i)$ is the Green function of $n$-Laplacian operator with the singularity at $x_i$, $R(x,x_i)$ is continuous at $x_i$ and satisfies $\lim\limits_{x\rightarrow x_i}\left|\nabla(R(x,x_i)-R(x_i, x_i))\right||x-x_i|=0$ (see \cite{Li, Li-Ruf}). Careful calculation gives that

\[\begin{array}{l}
    \medskip

\medskip

   \lim\limits_{\varepsilon\rightarrow 0} \lim\limits_{\lambda\rightarrow 0}\int_{\partial B^n(x_i,\varepsilon)}F(u_\lambda)\frac{\partial (|x-x_i|^2)}{\partial n}dS= 0,
    \\
     \medskip

\medskip

    \lim\limits_{\varepsilon\rightarrow 0} \lim\limits_{\lambda\rightarrow 0}{\int_{\partial{B^n(x_i,\varepsilon)}}\left| {\nabla u_\lambda} \right|^n(x-x_i,\nu)dS} =\omega_{n-1}\left(\frac{n}{\alpha_n}\right)^n\mu_0^{\frac{n}{n-1}}(x_i),
    \\

    \lim\limits_{\varepsilon\rightarrow 0} \lim\limits_{\lambda\rightarrow 0}\int_{\partial B^n(x_i,\varepsilon)}|\nabla u_\lambda|^{n-2}(\nabla u_\lambda,\nu)(x-x_i,\nabla u_\lambda)dS=\omega_{n-1}\left(\frac{n}{\alpha_n}\right)^n\mu_0^{\frac{n}{n-1}}(x_i).
\end{array}\]
\[\]
This together with Pohozaev identity \eqref{poho1th} yields that
\[\lim\limits_{\varepsilon\rightarrow 0} \lim\limits_{\lambda\rightarrow 0} n^2{\int_{{B^n}(x_i,\varepsilon)}  {F(u_\lambda)dx} }= (n-1)\omega_{n-1}\left(\frac{n}{\alpha_n}\right)^n\mu_0^{\frac{n}{n-1}}(x_i).\]
\[\]
On the other hand,
\[ \lim\limits_{\varepsilon\rightarrow 0} \lim\limits_{\lambda\rightarrow 0} n^2\int_{{B^n}(x_i,\varepsilon)}  {F(u_\lambda)dx}= \lim\limits_{\varepsilon\rightarrow 0} \lim\limits_{\lambda\rightarrow 0} n^2 \int_{{B^n}(x_i,\varepsilon)}  {\lambda e^{u_\lambda}dx}=n^2\mu_{0}(x_i). \]
\[\]
Combining the above estimate, we conclude that
\[\mu_0^{\frac{1}{n-1}}(x_i)=\frac{n}{n-1} \alpha_n.\]
\end{proof}
\medskip

\emph{\bf The proof of Step 3}:
Recalling from Step 1, we have proven that $u_\lambda$ must blow up as $\lambda\rightarrow 0$. Hence, to accomplish the proof of Step 3, we only need to prove that if $u_\lambda$ blows up, then $\lambda$ must approach to zero. Since $\lambda\int_{\Omega}e^{u_\lambda}dx$ is bounded, we only need to prove that  $ \lim\limits_{\lambda\rightarrow 0} \int_{\Omega}e^{u_\lambda}dx =+\infty$. By boundary estimate Lemma \ref{partial},
we know that $u_\lambda$ does not blow up at the boundary. If $u_\lambda$ blows up at some point $x_1$, we claim that

\[\mu_0(x_1)\geq\alpha_n^{n-1}.
\]
\[\]
Indeed, suppose not, there exists $\delta>0$ such that $\mu_0(x_1)<( \alpha_n-\delta)^{n-1}$. According to Lemma \ref{5th}, $u_\lambda(x_1)$ is bounded, which is a contradiction.
Using the comparison principle for $n$-Laplacian operator, we get

\[u_\lambda\geq  n\ln \frac{1}{|x-x_1|}.\]
Naturally, we have

\[\lim\limits_{\lambda\rightarrow 0}\int_{{B^n}(x_1,\delta)}e^{u_\lambda }dx\geq\lim\limits_{\lambda\rightarrow 0}\int_{{B^n}(x_1,\delta)}\frac{1}{|x-x_1|^n}dx= +\infty.\]
\[\]
The proof of Theorem $\ref{Th1.1th}$ is completed.
\medskip

\section{the Proof of Theorem \ref{Th1.2th}}
\medskip

In this section, we will show the non-existence of extremal functions for the Moser-Onofri inequality in the ball of $\mathbb{R}^n$ and obtain the accurate value of infimum of the Moser-Onofri inequality in the ball, namely, we shall give the proof of Theorem \ref{Th1.2th}.
\medskip

\medskip

We first show  that the Moser-Onofri inequality in the ball of $\mathbb{R}^n$ does not have an extremal, i.e.,
\medskip

\begin{lemma}

\label{lem1}
$$\inf_{u\in W^{1,n}_0(B^n)}\Big(\frac{1}{n C_n}\int_{B^n} {|\nabla u|^n} dx- \ln \int_{B^n}{e^u}dx\Big)$$
cannot be  achieved in $W_0^{1,n}(B^n)$.
\end{lemma}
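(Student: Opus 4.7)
The plan is to argue by contradiction. Assume an extremal $u\in W_0^{1,n}(B^n)$ exists. Since replacing $u$ by $u^+$ does not increase the functional (because $e^{u^+}\ge e^u$ everywhere while $|\nabla u^+|\le |\nabla u|$ a.e.), the minimizing property forces $u\ge 0$. Schwarz symmetrization then preserves $\int_{B^n}e^u\,dx$ and does not increase $\int_{B^n}|\nabla u|^n\,dx$, so the rearrangement $u^*$ is also a minimizer; after this replacement I assume $u$ is radial, nonnegative, and (by standard quasilinear regularity, e.g.\ \cite{Li}) of class $C^1(\overline{B^n})$. Varying the functional yields the Euler--Lagrange equation
\begin{equation*}
-\Delta_n u=\lambda e^u\text{ in }B^n,\qquad u=0\text{ on }\partial B^n,\qquad \lambda:=\frac{C_n}{\int_{B^n}e^u\,dx}>0.
\end{equation*}

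The next step is to apply the Pohozaev identity of Lemma \ref{poho1th} with $x_i=0$ and $r=1$. Because $u\equiv 0$ on $\partial B^n$, $F(u)=\lambda(e^u-1)$ vanishes on the boundary, killing the first boundary term. Radial symmetry gives $\nabla u=(\partial_\nu u)\nu$ on $\partial B^n$, and $x=\nu$ there, so the remaining two boundary integrals collapse to $(n-1)\int_{\partial B^n}|\partial_\nu u|^n\,dS$. The identity thus reduces to
\begin{equation*}
n^2\lambda\int_{B^n}(e^u-1)\,dx=(n-1)\int_{\partial B^n}|\partial_\nu u|^n\,dS.
\end{equation*}

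To evaluate the right-hand side, I would integrate the Euler--Lagrange equation over $B^n$ and apply the divergence theorem. Writing $c:=|\partial_\nu u|$, which is constant on $\partial B^n$ by radial symmetry,
\begin{equation*}
C_n=\int_{B^n}\lambda e^u\,dx=\int_{\partial B^n}|\partial_\nu u|^{n-1}\,dS=c^{n-1}\omega_{n-1},
\end{equation*}
whence $c=n^2/(n-1)$. A direct algebraic computation then shows $(n-1)c^n\omega_{n-1}=n^2C_n$. Substituting this together with $\lambda\int_{B^n}e^u\,dx=C_n$ into the reduced Pohozaev identity yields
\begin{equation*}
n^2C_n-n^2\lambda|B^n|=n^2C_n,
\end{equation*}
so $\lambda|B^n|=0$, contradicting $\lambda>0$.

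The main subtlety in this plan is the reduction to a radial extremal. Fortunately, only the existence of a radial minimizer (not radiality of every minimizer) is needed, and this is delivered cleanly by Schwarz symmetrization together with the positivity reduction. The rest is a clean computation once the boundary terms in Lemma \ref{poho1th} are simplified via $u|_{\partial B^n}=0$ and the fact that radiality makes $|\partial_\nu u|$ constant, so that the constants built into $C_n=(n^2/(n-1))^{n-1}\omega_{n-1}$ conspire to make the Pohozaev obstruction nontrivial precisely at the critical coupling $\rho=C_n$.
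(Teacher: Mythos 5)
Your argument is correct, and it rests on the same two pillars as the paper's proof: the Pohozaev identity of Lemma \ref{poho1th} applied on the unit ball (with the first boundary term killed by $u|_{\partial B^n}=0$ and the remaining ones collapsing because $\nabla u$ is normal on the boundary), and the flux identity $\int_{\partial B^n}|\nabla u|^{n-1}\,dS=C_n$ obtained by integrating the Euler--Lagrange equation. Where you diverge is in how the contradiction is extracted. The paper works with an arbitrary (possibly non-radial) extremal and finishes with H\"older's inequality on the boundary: $C_n=\int_{\partial B^n}|\nabla u|^{n-1}\,dS\le|\partial B^n|^{1/n}\bigl(\int_{\partial B^n}|\nabla u|^{n}\,dS\bigr)^{(n-1)/n}$, where the strict inequality comes from $\int_{B^n}(e^u-1)\,dx<\int_{B^n}e^u\,dx$ in the Pohozaev identity, contradicting the exact value of $C_n$. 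You instead first manufacture a radial, nonnegative minimizer via $u\mapsto u^+$ and Schwarz symmetrization (Polya--Szeg\H{o} plus equimeasurability of $e^{u}$), which makes $|\partial_\nu u|$ constant on $\partial B^n$, and then the two identities force the exact relation $n^2C_n-n^2\lambda|B^n|=n^2C_n$, i.e.\ $\lambda|B^n|=0$. Your route buys a cleaner, purely algebraic contradiction (the "$-1$" in $F(u)=\lambda(e^u-1)$ does the work directly), at the cost of the extra symmetrization step and its standard prerequisites; the paper's H\"older trick avoids any symmetry reduction and applies verbatim to every putative extremal. One small wording caveat: the truncation $u\mapsto u^+$ shows that \emph{some} minimizer is nonnegative, not that every minimizer is, but as you note only the existence of a radial nonnegative minimizer is needed, so the argument stands.
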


\medskip

\begin{proof}
We argue this by contradiction. Indeed, if the infimum

$$\mathop {\inf }\limits_{u\in W^{1,n}_0(B^n)}\Big(\frac{1}{n C_n}\int_{B^n} {|\nabla u|^n}dx - \ln \int_{B^n}{e^u} dx\Big)$$
\[\]
 were achieved, then the extremal function $u$ would satisfy the equation

\begin{equation}
\begin{cases}\label{ad1}
-\Delta_n u=C_n \frac{e^u}{\int_{B^n}{e^u}dx}\ \ \ {\rm in}\ B^n,   \\
\ \ \ \ \ \ \ u=0 \ \ \ \ \ \ \ \ \ \ \ \ \ \ {\rm on}\ \partial B^n.
\end{cases}
\end{equation}
\[\]
Applying the Pohozaev identity to equation (\ref{ad1}), we get

\begin{equation}
\label{Poho2}
    \int_{\partial{B^n}}| \nabla u|^n(x,\nu)dS
    = \frac{n^2}{n-1}\frac{C_n}{\int_{ B^n}{e^u}dx}\int_{ B^n}(e^u-1)dx.
\end{equation}
\[\]
Since $|\frac{\partial u}{\partial n} |=|\nabla u |$ on $\partial{B^n}$ and using H\"{o}lder's inequality, one can calculate that

\[
C_n= \int_{\partial{B^n}}|\nabla u |^{n-1}dS  \leq |\partial B^n|^{\frac{1}{n}} \left(\int_{\partial B^n}|\frac{\partial u}{\partial n}|^n(x,\nu)dS \right)^{\frac{n-1}{n}}<|\partial B^n|^{\frac{1}{n}}\left(\frac{n^2}{n-1}C_n\right)^{\frac{n-1}{n}},
\]
\[\]
which arrives at a contradiction with $C_n=\left(\frac{n^2}{n-1}\right)^{n-1} \omega_{n-1}$. This proves that the Moser-Onofri inequality in $B^n$ actually does not admit any extremal.
\end{proof}

\medskip

\medskip

Next, we start to calculate the accurate value of infimum of the Moser-Onofri inequality

$$\mathop {\inf }\limits_{u\in W^{1,n}_0(B^n)}\frac{1}{n C_n}\int_{B^n} {|\nabla u|^n} dx- \ln \int_{B^n}{e^u} dx.$$
\[\]
The proof can be divided into two parts. In Part 1, we will adopt the method of subcritical approximation and capacity estimate to obtain the lower-bound of the infimum of the Moser-Onofri inequality on the ball. In Part 2, we will construct a suitable test function sequence to show that the lower-bound obtained in Part 1 is actually the infimum of the Moser-Onofri inequality on the ball $B^n$.
\medskip

\medskip

\emph{\bf Part 1}: We start the proof of the lower-bound of the infimum of the Moser-Onofri inequality on the ball. For this purpose, we first show that the subcritical Moser-Onofri inequality

\begin{equation}
\mathop {\inf }\limits_{u \in W_0^{1,n}(B^n)}\frac{1}{n \rho}\int_{B^n}   | \nabla u|^n dx- \ln \int_{B^n} {e^u} dx> -\infty\ \ \ \ where\, \rho<C_n
\
\label{JCN1}
\end{equation}
\[\]
admits an extremal.
\medskip

\begin{lemma}
\label{sub}
Denote

$$J_\rho(u)=\frac{1}{n \rho}\int_{B^n}  | \nabla u|^ndx - \ln \int_{B^n} {e^u} dx.$$
\[\]
Then for $\rho<C_n$,
$\mathop {\inf }\limits_{u \in W_0^{1,n}(B^n)}J_\rho(u)$ can be achieved by some function $u_\rho \in W_{0}^{1,n}(B^n)$.

\end{lemma}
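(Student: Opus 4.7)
The plan is to apply the direct method of the calculus of variations; the only subtlety is the compactness of minimizing sequences under the exponential nonlinearity. First I would establish that $J_\rho$ is coercive on $W_0^{1,n}(B^n)$. Writing $v = u/\|\nabla u\|_n$, Young's inequality with exponents $n/(n-1)$ and $n$ applied with a parameter $\epsilon>0$ gives
\begin{equation*}
u \;\le\; \frac{n-1}{n}\epsilon^{n/(n-1)} v^{n/(n-1)} + \frac{1}{n\epsilon^n}\|\nabla u\|_n^n.
\end{equation*}
Choosing $\epsilon$ so that $\frac{n-1}{n}\epsilon^{n/(n-1)} = \alpha_n(1-\delta)$ for a small $\delta>0$ gives $\epsilon^n = (1-\delta)^{n-1} C_n$ (using $\alpha_n^{n-1} = n^{n-1}\omega_{n-1}$). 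Exponentiating and applying the Moser--Trudinger inequality \eqref{TM1} to $v$ yields
\begin{equation*}
\int_{B^n} e^u\,dx \;\le\; C\exp\!\left(\frac{\|\nabla u\|_n^n}{n(1-\delta)^{n-1}C_n}\right),
\end{equation*}
so that $J_\rho(u) \ge \bigl(\tfrac{1}{n\rho} - \tfrac{1}{nC_n(1-\delta)^{n-1}}\bigr)\|\nabla u\|_n^n - \ln C$. Since $\rho<C_n$, I can fix $\delta$ small enough that the bracketed coefficient is strictly positive. This shows $J_\rho$ is bounded below on $W_0^{1,n}(B^n)$ and that every minimizing sequence $\{u_k\}$ is bounded in $W_0^{1,n}(B^n)$.

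Given such a bounded minimizing sequence, I pass to a subsequence with $u_k\rightharpoonup u_\rho$ weakly in $W_0^{1,n}(B^n)$; by Rellich--Kondrachov, $u_k\to u_\rho$ in every $L^p(B^n)$ for $1\le p<\infty$ and a.e.\ on $B^n$. Weak lower semicontinuity of the Dirichlet norm gives $\|\nabla u_\rho\|_n^n\le \liminf_k \|\nabla u_k\|_n^n$ for free. The only substantive remaining step is to prove $\int_{B^n} e^{u_k}\,dx \to \int_{B^n} e^{u_\rho}\,dx$.

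For this I apply the coercivity bound above to $q u_k$ with a fixed $q>1$: since $\|\nabla(q u_k)\|_n = q\|\nabla u_k\|_n$ remains uniformly bounded, the exponential estimate yields $\int_{B^n} e^{q u_k}\,dx \le C_q$, so $\{e^{u_k}\}$ is bounded in $L^q(B^n)$ and hence equi-integrable. Combined with the pointwise a.e.\ convergence $e^{u_k}\to e^{u_\rho}$, Vitali's convergence theorem gives the desired $L^1$ convergence, and hence $J_\rho(u_\rho)\le \liminf_k J_\rho(u_k) = \inf_{W_0^{1,n}}J_\rho$. Thus $u_\rho$ realizes the infimum.

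The main obstacle, though routine given the Moser--Trudinger machinery, is precisely this $L^1$ passage for the exponential. The subcriticality hypothesis $\rho<C_n$ is used in an essential way: it leaves enough slack in the coercivity estimate to absorb an exponent $q>1$ in the Moser--Trudinger argument and obtain equi-integrability. At the critical threshold $\rho=C_n$ the slack disappears, consistent with the non-existence of extremals established in Lemma~\ref{lem1}.
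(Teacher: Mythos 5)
Your proof is correct and follows essentially the same route as the paper: the direct method, with coercivity for $\rho<C_n$ obtained from the Trudinger--Moser inequality (you rederive the Moser--Onofri-type lower bound via Young's inequality, where the paper simply invokes \eqref{MO1}), followed by weak lower semicontinuity of the Dirichlet energy and convergence of $\int_{B^n}e^{u_k}dx$ from higher $L^q$-integrability of $e^{u_k}$ plus a.e.\ convergence. Your write-up in fact supplies the details behind the paper's one-line assertion that boundedness in $W_0^{1,n}(B^n)$ yields $e^{u_j}\to e^{u_\rho}$ in $L^p$, so no gap.
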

\medskip

\begin{proof}
Let $\left\{ {u_j } \right\} \in W_{0}^{1,n} (B^n)$ be a minimizing sequence for $J_\rho(u)$, i.e.,

\[\mathop {\lim }\limits_{j \to \infty }\frac{1}{n\rho}\int_{B^n}  | \nabla u_j|^n dx- \ln \int_{B^n} e^{u_j} dx
 = J_\rho(u).\]
 \[\]
On the other hand, using the Moser-Onofri inequality \eqref{MO1}, we derive that
$$  \lim\limits_{j\rightarrow +\infty}\frac{1}{ n C_n}\int_{\Omega}  | \nabla u_j|^n dx- \ln \int_{\Omega} {e^{u_j}} dx>-C.$$
Combining the above estimates, we obtain that $u_j$ is bounded in $W_{0}^{1,n} (B^n)$,
which implies that
$e^{u_j} dx  \to e^{u_\rho} dx$ in $L^p (B^n)$ for any $p>1$, where $u_\rho$ is the weak limit of $u_j$ in $W^{1,n}_0(B^n)$.
Then the proof for existence of extremals of $\mathop {\inf }\limits_{u \in W_0^{1,n}(B^n)}J_\rho(u)$ for $\rho<C_n$ is accomplished.
\end{proof}
\medskip

Obviously, $u_\rho$ satisfies

\begin{equation}
\left\{ {\begin{array}{*{20}{c}}
  {  - \Delta_n  u_\rho  = \rho \frac{e^{u_\rho}}{\int_{B^n}{e^{u_\rho}}dx}} & {{\rm in} \ B^n, }  \\
   {u_\rho = 0} & {\ \ {\rm on} \ \partial B^n}.  \\
\end{array}} \right.
\label{eq3}
\end{equation}
\[\]
By the maximum principle and moving-plane method, we know that $u_\rho$ is a radial decreasing function.
Since $u_\rho$ is the extremal function of the subcritical Moser-Onofri inequality (\ref{JCN1}),

\[\frac{1}{n\rho}\int_{B^n}  | \nabla u_\rho|^n dx- \ln \int_{B^n} e^{u_\rho} dx
\leq \frac{1}{n \rho}\int_{B^n}  | \nabla u|^ndx - \ln \int_{B^n} e^u dx,\ \ \forall u\in W_0^{1,n}(B^n).\]
\[\]
 Letting $\rho\rightarrow C_n$, then taking the infimum of both sides of the above inequality, it deduces that

 \[\mathop {\lim }\limits_{\rho \to C_n }\frac{1}{n\rho}\int_{B^n}  | \nabla u_\rho|^n dx- \ln \int_{B^n} e^{u_\rho} dx
\leq \mathop {\inf }\limits_{u \in W_0^{1,n}(B^n )}\frac{1}{n C_n}\int_{B^n}  | \nabla u|^ndx - \ln \int_{B^n} e^u dx.\]
 \[\]
 Using the definition of infimum, it is obvious that

 \[\mathop {\lim }\limits_{\rho \to C_n }\frac{1}{n\rho}\int_{B^n}  | \nabla u_\rho|^n dx- \ln \int_{B^n} e^{u_\rho} dx
 \geq \mathop {\inf }\limits_{u \in W_0^{1,n}(B^n )}\frac{1}{n C_n}\int_{B^n}  | \nabla u|^ndx - \ln \int_{B^n} e^u dx.\]
 \[\]
Then we obtain

\[\mathop {\lim }\limits_{\rho \to C_n }\frac{1}{n\rho}\int_{B^n}  | \nabla u_\rho|^n dx- \ln \int_{B^n} e^{u_\rho} dx
 = \mathop {\inf }\limits_{u \in W_0^{1,n}(B^n)}\frac{1}{n C_n}\int_{B^n }  | \nabla u|^ndx - \ln \int_{B^n} e^u dx.\]
 \[\]
\medskip

Hence, to obtain the infimum of critical Moser-Onofri inequality, we only need to calculate the limit

\[\mathop {\lim }\limits_{\rho \to C_n }\frac{1}{n\rho}\int_{B^n}  | \nabla u_\rho|^n dx- \ln \int_{B^n} e^{u_\rho} dx.
 \]
\medskip

Assume $c_\rho:= \mathop {\max}\limits_{B^n} u_{\rho}(x)$.
 We claim that $c_\rho$ is unbounded and argue this by contradiction. In fact, if $c_\rho$ is bounded, then it follows from the regularity estimate for $n$-Laplacian operator that there exists some $u\in W_{0}^{1,n}(B^n)$ such that $u_\rho \rightarrow u$ in $C^{1}(B^n)$ and $u$ satisfies $n$-Laplacian mean field equation

\begin{equation}
\left\{ {\begin{array}{*{20}{c}}
  { \ \ - \Delta_n  u_\rho  = C_n \frac{e^{u_\rho}}{\int_{B^n}{e^{u_\rho}}dx}} & {{\rm in} \ B^n, }  \\
   {u_\rho = 0} & {\ \ {\rm on} \ \partial B^n },  \\
\end{array}} \right.
\end{equation}
\[\]
which is a contradiction with Lemma \ref{lem1}. Thus, $c_\rho$ is unbounded.
Furthermore, using Corollary \ref{cor1}, one can deduce that
$ u_\rho(x)\rightarrow u_0(x)$ in $C^{1}_{loc}(B^n\backslash \{0\})$,
where $u_0(x)$ satisfies the equation

\begin{equation*}\begin{cases}
&- \Delta_n u_0 = \sum\limits_{i = 1}^m \left(\frac{n}{n-1} \alpha_n\right)^{n-1}\delta_0, \ \ x\in B^n, \\
& u_0=0,\ \ x\in \partial B^n.
\end{cases}\end{equation*}
\[\]
This characterizes the asymptotic behavior of $u_\rho$ away from the blow-up point $0$. Now we start to study the asymptotic behavior of $u_\rho$ around the origin.
Set

\[
\eta_\rho(x):=u_\rho(\varepsilon_\rho x) - {c_\rho}+\ln \beta_n,\ \ \ x\in B^n(0,\varepsilon^{-1}_\rho)\]
\[\]
and

\begin{equation}
\frac{\rho}{\int_{B^n}{e^{u_\rho}}dx} \varepsilon_\rho^n e^{c_\rho-\ln \beta_n} =1.
\label{eta}
\end{equation}
\[\]
Careful computation gives the following equation

\[- \Delta_n \eta_\rho= e^{\eta_\rho} \ \ \ {\rm in} \ \ \ B^n(0,\varepsilon^{-1}_\rho).\]
\[\]
Obviously, $\varepsilon_\rho\rightarrow 0$ as $\rho\rightarrow C_n$. Indeed, if $\varepsilon_\rho\nrightarrow0$, then $\rho\frac{e^{c_\rho}}{\int_{B^n} e^{u_\rho} dx}<+\infty$. Hence $f_\rho=\rho\frac{e^{u_\rho}}{\int_{B^n} e^{u_\rho} dx}$ is $L^{\infty}$ bounded in $B^n$ . Applying the quasilinear estimate into equation $-\Delta_n u_\rho=f_\rho \in L^{\infty}(B^n), \ u_\rho=0\ {\rm in }\ \partial B^n$ (see Theorem 2.2 of \cite{Li}), we conclude that $u_\rho$ is uniformly bounded in $B^n$, which is a contradiction.
\medskip

Since $\eta_\rho\leq 0$ and $e^{\eta_\rho}\in  L^\infty(B^n)$, according to Harnack inequality \cite{Trudinger}, we know that
$\eta_\rho$ is uniformly bounded near origin.
Using quasilinear elliptic estimate again, one can derive that there exists $\eta_0\in C^{1,\alpha}(\mathbb{R}^n)$ such that
$\eta_\rho \rightarrow\eta_0$ in $C_{loc}^{1,\alpha}(\mathbb{R}^n)$.
Then, it follows that

\begin{equation}\begin{split}
\int_{\mathbb{R}^n}e^{\eta_0} dx&=\mathop {\lim }\limits_{R \to +\infty }\int_{B^n (0,R)}   e^{\eta_0} dx
=\mathop {\lim }\limits_{R \to +\infty }\mathop {\lim }\limits_{\rho \to C_n}\int_{B^n (0,R)}   e^{\eta_\rho} dx\\
&\leq \mathop {\lim }\limits_{R \to +\infty }\mathop {\lim }\limits_{\rho \to C_n}\int_{B^n(0,\varepsilon^{-1}_\rho)}   e^{\eta_\rho} dx= C_n.
\end{split}\end{equation}
\[\]
By the classification of solution for Liouville equation and $\eta_0(0)=\ln \beta_n$, we have

 $$\eta_0=\ln\frac{ \beta_n}{ \big(1+| x |^{\frac{n}{n-1}}\big)^n}.$$
\[\]
In summary, we have obtained the asymptotic behavior of $u_\rho$ near and away from origin.
Next, we aim to establish the asymptotic behavior of $\eta_\rho$ at infinity.
Denote $\varphi:=-c_1\ln |x|+\ln \beta_n$ and

\begin{equation}
\left\{ {\begin{array}{*{20}{c}}
  { \varphi|_{\partial B^n(0, R)} : = -c_1\ln R+\ln\beta _n,}  \\
   {\varphi|_{\partial B^n(0, \varepsilon_\rho^{-1})}:=c_1\ln\varepsilon_\rho+\ln \beta_n, }  \\
\end{array}} \right.
\end{equation}
\[\]
where $c_1$ is a undetermined positive constant. If $c_1$ satisfies the following conditions

\begin{equation}
\left\{ {\begin{array}{*{20}{c}}
  { \varphi|_{\partial B^n(0, R)} < \eta_\rho|_{\partial B^n(0, R)},}  \\
   {\varphi|_{\partial B^n(0, \varepsilon_\rho^{-1})}<
   \eta_\rho|_{\partial B^n(0, \varepsilon_\rho^{-1})}. }  \\
\end{array}} \right.
\end{equation}
\[\]
Since $\eta_\rho|_{\partial B^n(0, \varepsilon_\rho^{-1})}=-c_\rho+\ln \beta_n$ and $\mathop {\lim }\limits_{\rho \to C_n }\eta_\rho|_{\partial B^n(0, R)}=\eta_0|_{\partial B^n(0, R)}=\ln\frac{\beta _n}{ ( 1+R^{\frac{n}{n-1}})^n}$, by direct computation we can choose
 $c_1=\frac{n^2}{n-1}+\sigma$ such that

 $$c_1\ln R-n\ln(1+R^{\frac{n}{n-1}})>0.$$
\[\]
Then using the comparison principle, there holds that

$$\eta_\rho\geq -(\frac{n^2}{n-1}+\sigma)\ln |x|+\ln\beta _n\ \ \ {\rm in}\ \ B^n(0, \varepsilon_\rho^{-1})\backslash B^n(0, R).$$
\[\]
Furthermore,
we will show the accurate asymptotic behavior of $\eta_\rho$ at infinity. For simplicity, we only provide an outline of the proof. Let us first recall the Kelvin transform $\hat{\eta}_\rho(x)=\eta_\rho(\frac{x}{|x|^2})$ of
${\eta_\rho}$ satisfies

\begin{equation}
\left\{ {\begin{array}{*{20}{c}}
\vspace{1.5ex}
   { - \Delta_n \hat{\eta}_\rho = \frac{e^{ \hat{\eta}_\rho }}{|x|^{2n}}\ \ \ {\rm in}\ \mathbb{R}^n\backslash B^n(0,\varepsilon_\rho), }& {}  \\
   {\int_{\mathbb{R}^n\backslash B^n(0,\varepsilon_\rho)} \frac{e^{ \hat{\eta}_\rho }}{|x|^{2n}} dx=\rho.\ \ \ \ \ } & {}  \\
\end{array}} \right.
\label{2.24}
\end{equation}
\[\]
Obviously, $\hat{\eta}_\rho\in C^{1,\alpha}(\mathbb{R}^n\backslash B^n(0,\varepsilon_\rho))$.
\medskip

\medskip

Step 1. Decomposing $\hat{\eta}_\rho=\eta_\rho^\epsilon+H_\epsilon$, we fix small $r>0$, and for $0<\epsilon<r$, $H_\epsilon$ satisfies

\begin{equation}
\left\{ {\begin{array}{*{20}{c}}
\vspace{1.5ex}
   { - \Delta_n H_\epsilon = 0\ \ \ {\rm in}\ B^n(0,r)\backslash B^n(0,\epsilon), }& {}  \\
   {H_\epsilon=\hat{\eta}_\rho\ \ \ {\rm on} \ \partial \left (B^n(0,r)\backslash B^n(0,\epsilon)\right).} & {}  \\
\end{array}} \right.
\label{2.26}
\end{equation}
\[\]
Local  H\"{o}lder estimates about equation \eqref{2.26} can be found in \cite{DiBenedetto, Serrin}, we can get that

 $$  H_\epsilon\in C^{1,\alpha}(\overline{B^n(0,r)\backslash B^n(0,\epsilon)}).$$
 \[\]
  Then $\eta_\rho^\epsilon=\hat{\eta}_\rho-H_\epsilon\in C^{1,\alpha}(\overline{B^n(0,r)\backslash B^n(0,\epsilon)})$
satisfies that

\begin{equation}
\left\{ {\begin{array}{*{20}{c}}
\vspace{1.5ex}
   { - \Delta_n (\hat{\eta}_\rho-\eta_\rho^\epsilon) = 0\ \ \ {\rm in}\ B^n(0,r)\backslash B^n(0,\epsilon), }& {}  \\
   {\eta_\rho^\epsilon=0 \ \ \ {\rm on} \ \partial (B^n(0,r)\backslash B^n(0,\epsilon)).} & {}  \\
\end{array}} \right.
\label{2.27}
\end{equation}
\medskip

\medskip

Step 2. By Lemma \ref{3th} and Sobolev embedding Theorem, we find that

$$\|H_\epsilon\|_{L^n( B^n(0,r)\backslash B^n(0,\epsilon))}\leq C.$$
\[\]
Applying Ascoli-Arzela's Theorem and the description in \cite{DiBenedetto, Serrin}, as $\epsilon\rightarrow 0$, there holds that $H_\epsilon\rightarrow H_0$ in $C_{loc}^1(\overline{B^n(0,r)}\backslash \{0\})$, where $H_0$ satisfies that

\begin{equation}
\left\{ {\begin{array}{*{20}{c}}
\vspace{1.5ex}
   { - \Delta_n H_0 = \delta_0\ \ \ {\rm in}\ B^n(0,r), }& {}  \\
   {H_0=\eta_\rho^\epsilon \ \ \ {\rm on} \ \partial B^n(0,r),} & {}  \\
\end{array}} \right.
\end{equation}
\[\]
and
$H_0(x)+\left(\frac{\rho}{\omega_{n-1}}\right)^{\frac{1}{n-1}}\ln |x|\in L^\infty(B^n(0,r))$.
\medskip

Step 3. By comparison principle, as $\epsilon\rightarrow 0$, we have that $\eta_\rho^\epsilon\rightarrow \eta_\rho^0:=\hat{\eta}_\rho-H_0$ in
$C_{loc}^1(\overline{B^n(0,r)}\backslash \{0\})$, where $e^{\eta_\rho^0}\in L^p(B^n(0,r))$ for all $p\geq 1$.
Using Lemma \ref{3th} and Sobolev embedding Theorem again,
as $\epsilon\rightarrow 0$,

$$\|\eta_\rho^0\|_{L^\infty(B^n(0,r)\backslash \{0\})}\leq C.$$
\[\]
Combining Step 1-3, one can easily derive that

\begin{equation}
\label{3-0}
\eta_\rho(x)+\left(\frac{\rho}{\omega_{n-1}}\right)^{\frac{1}{n-1}}\ln |x|\in L_{loc}^\infty(B^n(0,\varepsilon_\rho^{-1})).
\end{equation}
\medskip

Now, we are in the position to use capacity estimate to calculate the value of $\mathop {\lim }\limits_{\rho \to C_n }J_{\rho}(u_\rho)$, and
$$\mathop {\lim }\limits_{\rho \to C_n }J_{\rho}(u_\rho)=\mathop {\lim }\limits_{\rho \to C_n }\frac{1}{n\rho}\int_{B^n}  | \nabla u_\rho|^n dx- \ln \int_{B^n} e^{u_\rho} dx.$$
\medskip

\medskip

\begin{proposition}
\label{JC1th}

\begin{equation}
\mathop {\lim }\limits_{\rho \to C_n }J_{\rho}(u_\rho)
    \geq\frac{1}{nC_n}\int_{\mathbb{R}^{n}}e^{\eta_0(y)}\eta_0(y)dy+\frac{n-1}{n}\ln \beta_n- \ln C_n.
\label{lowerbd}
\end{equation}
\end{proposition}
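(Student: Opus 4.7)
The plan is to combine the subcritical setup of Lemma \ref{sub} with the rescaled bubble asymptotics of $u_\rho$ and a sharp capacity estimate on a collar. The starting observation is that testing the Euler--Lagrange equation $-\Delta_n u_\rho = \rho e^{u_\rho}/M_\rho$ (with $M_\rho:=\int_{B^n}e^{u_\rho}\,dx$) against $u_\rho$ rewrites
\[
J_\rho(u_\rho) \;=\; \frac{1}{n}\cdot\frac{\int_{B^n}e^{u_\rho}u_\rho\,dx}{M_\rho}\;-\;\ln M_\rho.
\]
I will then partition $B^n$ into three regions: the bubble core $B^n(0,\varepsilon_\rho R)$, a collar $B^n(0,\delta)\setminus B^n(0,\varepsilon_\rho R)$, and the exterior $B^n\setminus B^n(0,\delta)$, with $R>0$ large and $\delta>0$ small but both fixed independent of $\rho$ (so that $\varepsilon_\rho R<\delta$ for $\rho$ close to $C_n$).

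In the bubble core the change of variables $x=\varepsilon_\rho y$ together with the defining identity \eqref{eta} converts the core contribution to $\frac{1}{nM_\rho}\int e^{u_\rho}u_\rho\,dx$ into $\frac{1}{n\rho}\int_{B^n(0,R)}e^{\eta_\rho}(\eta_\rho+c_\rho-\ln\beta_n)\,dy$. By the local $C^{1,\alpha}$ convergence $\eta_\rho\to\eta_0$ and the mass identity $\int_{\mathbb{R}^n}e^{\eta_0}\,dy=C_n$, this term limits to $\frac{1}{nC_n}\int_{\mathbb{R}^n}e^{\eta_0}\eta_0\,dy+\frac{c_\rho-\ln\beta_n}{n}+o_{R,\rho}(1)$. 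For the collar I switch to the equivalent form $J_\rho(u_\rho)=\frac{1}{n\rho}\int_{B^n}|\nabla u_\rho|^n\,dx-\ln M_\rho$ and bound the collar Dirichlet integral from below by the $n$-capacity of the annulus: since $u_\rho$ is radial decreasing, the $n$-harmonic profile $a\ln|x|+b$ matching the boundary values $u_\rho(\varepsilon_\rho R)$ and $u_\rho(\delta)$ provides the minimizer and yields
\[
\int_{B^n(0,\delta)\setminus B^n(0,\varepsilon_\rho R)}|\nabla u_\rho|^n\,dx \;\ge\; \frac{\omega_{n-1}\,\bigl(u_\rho(\varepsilon_\rho R)-u_\rho(\delta)\bigr)^n}{\bigl(\ln(\delta/(\varepsilon_\rho R))\bigr)^{n-1}}.
\]
The exterior piece is handled by the $C^1_{\mathrm{loc}}(B^n\setminus\{0\})$ convergence $u_\rho\to u_0$ from Corollary \ref{cor1}, whose contribution is bounded and vanishes after sending $\delta\to 0$.

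Finally, I combine the three pieces using (i) the scaling identity $c_\rho=\ln M_\rho-\ln\rho-n\ln\varepsilon_\rho+\ln\beta_n$ coming from \eqref{eta}, (ii) the far--field profile $\eta_0(R)=\ln\beta_n-n\ln(1+R^{n/(n-1)})$, and (iii) the explicit Green--function value $u_0(\delta)=\frac{n^2}{n-1}\ln(1/\delta)+o_\delta(1)$ near the center of $B^n$, where the Robin constant vanishes by symmetry. Expanding the capacity numerator as $\bigl(c_\rho-\frac{n^2}{n-1}\ln(R/\delta)+o_R(1)\bigr)^n$ and the denominator as $\bigl(\ln(1/\varepsilon_\rho)-\ln(R/\delta)\bigr)^{n-1}$, replacing $\ln(1/\varepsilon_\rho)$ by $(c_\rho-\ln\beta_n+\ln M_\rho-\ln\rho)/n$ via the scaling identity, and dividing by $n\rho$, the capacity lower bound contributes $\frac{n-1}{n}(c_\rho-\ln\beta_n)+\frac{n-1}{n}\ln\beta_n-\ln C_n+o(1)$. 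Adding the bubble term from the previous paragraph and subtracting $\ln M_\rho$, the unbounded quantities $c_\rho$, $\ln\varepsilon_\rho$, $\ln M_\rho$ cancel exactly and the desired lower bound emerges. The main obstacle is precisely this bookkeeping: the capacity estimate must capture the leading order $c_\rho^n/(\ln\varepsilon_\rho^{-1})^{n-1}$ \emph{and} the sharp subleading constant $\frac{n-1}{n}\ln\beta_n-\ln C_n$ simultaneously. This sharpness relies crucially on the $L^\infty_{\mathrm{loc}}$ description of $\eta_\rho$ at $|x|\to\varepsilon_\rho^{-1}$ given in \eqref{3-0} and on the vanishing of the Robin constant of the unit ball; any looser matching yields a strict inequality with the wrong constant.
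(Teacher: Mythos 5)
Your overall route is the same as the paper's (subcritical extremals, the rescaled bubble $\eta_\rho$ with the normalization \eqref{eta}, a capacity comparison on an annulus between the bubble scale and a fixed small radius using the explicit $n$-harmonic profile $a\ln|x|+b$, the Green-function behavior of $u_0$ outside, and the tail control \eqref{3-0}), but the central bookkeeping step --- which you yourself flag as the main obstacle --- does not work as described. You evaluate the core via the form $\frac{1}{nM_\rho}\int e^{u_\rho}u_\rho\,dx$, obtaining a contribution $\frac{1}{nC_n}\int e^{\eta_0}\eta_0+\frac{c_\rho-\ln\beta_n}{n}$, and simultaneously bound the collar via the Dirichlet form $\frac{1}{n\rho}\int|\nabla u_\rho|^n$. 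These two representations of $J_\rho+\ln M_\rho$ cannot be mixed region by region: in the $e^{u}u$ form essentially the whole bulk $\frac{c_\rho}{n}$ sits in the core (the collar carries only the small deficit mass), while in the gradient form the core energy is merely $O(\ln R)$ and the bulk sits in the collar; converting one into the other costs a boundary flux term of size $\approx C_n\,u_\rho(\varepsilon_\rho R)\sim C_n c_\rho$ which you never account for. As a consequence your claimed ``exact cancellation'' is arithmetically false: adding your core term, your collar term $\frac{n-1}{n}(c_\rho-\ln\beta_n)+\frac{n-1}{n}\ln\beta_n-\ln C_n$, and $-\ln M_\rho$, and using \eqref{eta} (i.e.\ $\ln M_\rho=\ln\rho+c_\rho-\ln\beta_n-n\ln(1/\varepsilon_\rho)$), leaves the divergent quantity $n\ln(1/\varepsilon_\rho)-\ln\rho\to+\infty$, which would contradict the finite upper bound of Part 2; so at least one inequality in your chain is invalid.

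There is a second, related gap: the collar contribution cannot be obtained by ``expanding'' the capacity quotient $\omega_{n-1}\bigl(c_\rho-\tfrac{n^2}{n-1}\ln(R/\delta)+o(1)\bigr)^n/\bigl(\ln(1/\varepsilon_\rho)-\ln(R/\delta)\bigr)^{n-1}$, because the ratio of numerator to denominator is not known a priori; identifying it (equivalently, proving $c_\rho+\frac{n^2}{n-1}\ln\varepsilon_\rho\le o(1)$) is precisely the key claim of the paper, which is established there by a contradiction argument pitting the exact energy identity \eqref{eq8} against the annulus capacity (inequality \eqref{ineq4}). Without either that claim or a convexity step such as $\omega_{n-1}\frac{A^n}{B^{n-1}}\ge nC_nA-n^2C_nB$ (Young's inequality with the weight $\frac{n^2}{n-1}$, applied in a single, consistent Dirichlet-energy decomposition), the capacity bound does not linearize into a $c_\rho$-term with the sharp constant. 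Two further slips: your substitution for $\ln(1/\varepsilon_\rho)$ has the signs of $\ln M_\rho$ and $\ln\rho$ reversed relative to your own identity (i), which is not harmless since $\ln M_\rho\to\infty$; and in the core the deficit mass $\int_{|y|>R}e^{\eta_0}\,dy$ multiplies the divergent $c_\rho$, so the error there is not $o_{R,\rho}(1)$ --- the paper avoids this by using the exact mass identity $\int_{B^n(0,\varepsilon_\rho^{-1})}e^{\eta_\rho}\,dy=\rho$ instead of truncating at radius $R$.
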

\medskip

\begin{proof}
In fact, by  equation \eqref{eq3} for $u_\rho$ and equality \eqref{eta}, we infer to

\begin{equation}
   \int_{B^n}| \nabla  u_\rho |^ndx   =\int_{B^n}  {\rho \frac{e^{u_\rho}u_\rho}{\int_{B^n}  e^{u_\rho} dy}} dx=( c_\rho-\ln\beta_n)
   \int_{B^n(0,\varepsilon^{-1}_\rho)}  e^{\eta_\rho(y)}dy
   +\int_{B^n(0,\varepsilon^{-1}_\rho)}  e^{\eta_\rho(y)}\eta_\rho(y)dy
   \label{eq8}
\end{equation}
\[\]
and
\begin{align*}
    \int_{B^n} e^ { u_\rho }dx&=
   \rho\varepsilon_\rho^n e^ { c_\rho -\ln \beta_n}.
\end{align*}
\[\]
Hence,  we calculate directly that

\begin{align*}
\mathop {\lim }\limits_{\rho \to C_n }J_\rho(u_\rho)&=
\mathop {\lim }\limits_{\rho \to C_n }\left(\frac{1}{\rho n}\int_{B^n} | \nabla  u_\rho |^n dx- \ln \int_{B^n} e^ {u_\rho }dx\right)
  \\
  &=-\frac{n-1}{n}
\mathop {\lim }\limits_{\rho \to C_n }\left( c_\rho+\frac{n^2}{n-1}\ln \varepsilon_\rho+n \ln \big( \frac{R^{\frac{n}{n-1}}}{1+R^{\frac{n}{n-1}}}\big)+\int_{B^n(0,\varepsilon^{-1}_\rho)\backslash B^n(0,R)}e^{\eta_\rho}\eta_\rho dy\right)\\
  &\ \ +\frac{1}{n C_n}\mathop {\lim }\limits_{\rho \to C_n }\int_{B^n(0,\varepsilon^{-1}_\rho)}e^{\eta_\rho(y)}\eta_\rho(y)dy+
  \frac{n-1}{n}\mathop {\lim }\limits_{\rho \to C_n }\int_{B^n(0,\varepsilon^{-1}_\rho)\backslash B^n(0,R)}e^{\eta_\rho}\eta_\rho dy\\
  &\ \ +(n-1) \ln \big( \frac{R^{\frac{n}{n-1}}}{1+R^{\frac{n}{n-1}}}\big)+\frac{n-1}{n}\ln \beta_n- \ln C_n.
\end{align*}
\[\]
We will first claim that, for any sufficiently large $R$, there holds that

\[\mathop {\lim }\limits_{\rho \to C_n }\left( c_\rho+\frac{n^2}{n-1}\ln \varepsilon_\rho
+n \ln \big( \frac{R^{\frac{n}{n-1}}}{1+R^{\frac{n}{n-1}}}\big)
+\int_{B^n(0,\varepsilon^{-1}_\rho)\backslash B^n(0,R)}e^{\eta_\rho}\eta_\rho dy\right)\leq 0.\]
\[\]
Then

\begin{align*}
\mathop {\lim }\limits_{\rho \to C_n }J_\rho(u_\rho)&\geq
 \frac{1}{n C_n}\mathop {\lim }\limits_{\rho \to C_n }\int_{B^n(0,\varepsilon^{-1}_\rho)}e^{\eta_\rho(y)}\eta_\rho(y)dy+
  \frac{n-1}{n}\mathop {\lim }\limits_{\rho \to C_n }\int_{B^n(0,\varepsilon^{-1}_\rho)\backslash B^n(0,R)}e^{\eta_\rho}\eta_\rho dy\\
  &\ \ +(n-1) \ln \big( \frac{R^{\frac{n}{n-1}}}{1+R^{\frac{n}{n-1}}}\big)+\frac{n-1}{n}\ln \beta_n- \ln C_n.
\end{align*}
\[\]
One can prove this claim by contradiction. If not, there exists some $R_0>0$ such that
\[\mathop {\lim }\limits_{\rho \to C_n }\left( c_\rho+\frac{n^2}{n-1}\ln \varepsilon_\rho+n \ln \big( \frac{R_0^{\frac{n}{n-1}}}{1+R_0^{\frac{n}{n-1}}}\big)+\int_{B^n(0,\varepsilon^{-1}_\rho)\backslash B^n(0,R_0)}e^{\eta_\rho}\eta_\rho dy\right)\geq 0.\]
\[\]
 Consider the following inequality

\begin{equation}
\int_{B^n(0,\delta) \backslash  B^n(0,\varepsilon_\rho R_0 )} | \nabla  u_\rho|^ndx
    \geq
    \mathop {\mathop {\inf}\limits_{u|_{\partial B^n(0,\delta)} =u_\rho|_{\partial B^n(0,\delta)}} }
    \limits_{u|_{\partial B^n(0,\varepsilon_\rho R_0)}=u_\rho|_{\partial B^n(0,\varepsilon_\rho R_0)}}
   \int_{B^n(0,\delta)\backslash B^n(0,\varepsilon_\rho R_0)}|\nabla u|^ndx.
\label{inequatily3}
\end{equation}
\[\]
Then the left-hand side of inequality \eqref{inequatily3} can be written as

\begin{align*}
\int_{B^n(0,\delta) \backslash  B^n(0,\varepsilon_\rho R_0)} | \nabla  u_\rho|^n dx
 &=\left(\int_{B^n}-\int_{B^{n}\backslash B^n(0,\delta)}-\int_{B^n(0,\varepsilon_\rho R_0)}\right)| \nabla  u_\rho |^n dx
  \\
  &=:{\rm I}-{\rm II}-{\rm III}.
\end{align*}
\[\]
 For $\rm{I}$, using equality \eqref{eq8}, one can easily check that

\begin{align*}
    \mathop {\lim }\limits_{\rho \to C_n }{\rm I}&=( c_\rho-\ln\beta_n)
   \int_{B^n(0,\varepsilon^{-1}_\rho)}  e^{\eta_\rho(y)}dy
   +\int_{B^n(0,\varepsilon^{-1}_\rho)}  e^{\eta_\rho(y)}\eta_\rho(y)dy
   \\&=C_n( c_\rho-\ln\beta_n) + \mathop {\lim }\limits_{\rho \to C_n }\int_{B^n(0,\varepsilon^{-1}_\rho)}  e^{\eta_\rho(y)}\eta_\rho(y)dy.
\end{align*}
\[\]
For $\rm{II}$, recalling Corollary \ref{cor1} in the case of $m=1$, we have shown that as $\rho\rightarrow C_n$,
$ u_\rho\rightarrow u_0(x)$ in $C_{loc}^1(B^n\backslash \{0\})$,
where $u_0(x)$ satisfies the equation

\begin{equation*}\begin{cases}
&- \Delta_n u_0 = \sum\limits_{i = 1}^m \left(\frac{n}{n-1} \alpha_n\right)^{n-1}\delta_0, \ \ x\in B^n, \\
& u_0=0,\ \ x\in \partial B^n.
\end{cases}\end{equation*}
\[\]
By the relationship between the Green function of n-Laplacian operator with the singularity at $0$ and the Dirac function $\delta_0$,
one can immediately deduce that

\begin{align*}
    \mathop {\lim }\limits_{\rho \to C_n }{\rm II}& =\mathop {\lim }\limits_{\rho \to C_n }\int_{B^{n}\backslash B^n(0,\delta)}| \nabla  u_\rho(x) |^n dx
    =\left(\frac{n}{n-1}\alpha_n\right)^n \int_{B^{n}\backslash B^n(0,\delta)}| \nabla  G(x,0) |^n dx
     \\&
     =-C_n\frac{n^2}{n-1}\ln \delta.
\end{align*}
\[\]
Using the definition of $\eta_\rho$, we easily get that

\begin{align*}
    \mathop {\lim }\limits_{\rho \to C_n }{\rm III}&=\int_{B^n(0,\varepsilon_\rho R_0)}| \nabla  u_\rho(x) |^n dx
    = \mathop {\lim }\limits_{\rho \to C_n }\int_{B^n(0,R_0)}| \nabla  \eta_\rho (x)|^n dx
    \\&= \mathop {\lim }\limits_{\rho \to C_n }\int_{B^n(0,R_0)}  e^{\eta_\rho(y)}\eta_\rho(y)dy-C_n \ln\frac{\beta_n}{(1+R_0^{\frac{n}{n-1}})^n}.
\end{align*}
\[\]
As for the right-hand side, supposing

\[u=a\ln |x|+b .\]
By the following equation

\[
\left\{ {\begin{array}{*{20}{c}}
   u|_{\partial B^n(0,\delta)} =u_\rho|_{\partial B^n(0,\delta),} \\
   u|_{\partial B^n(0,\varepsilon_\rho R_0)}=u_\rho|_{\partial B^n(0,\varepsilon_\rho R_0),}  \\
\end{array}} \right.
\]
\[\]
and the definition of $u_\rho$ can yield that

\[
\left\{ {\begin{array}{*{20}{c}}
  a\ln \delta +b=u_\rho|_{\partial B^n(0,\delta),} \\
   a\ln \varepsilon_\rho R_0+b= u_\rho|_{\partial B^n(0,\varepsilon_\rho R_0)}.  \\
\end{array}} \right.
\]
\[\]
Hence, one can compute directly

\begin{equation}
a=\frac{-\frac{n^2}{n-1}\ln \delta-c_\rho-\ln\frac{1}{\left(1+R_0^{\frac{n}{n-1}}\right)^n}}{\ln \delta-\ln \varepsilon_\rho R_0}.
\label{a}
\end{equation}
\[\]
Thus,

\begin{align*}
\mathop {\mathop {\inf}\limits_{u|_{\partial B^n(0,\delta)} =u_\rho|_{\partial B^n(0,\delta)}} }
    \limits_{u|_{\partial B^n(0,\varepsilon_\rho R_0)}=u_\rho|_{\partial B^n(0,\varepsilon_\rho R_0)}}
    \|\nabla  u\|_{L^n(B^n(0,\delta) \backslash  B^n(0,\varepsilon_\rho R_0))}^n
 &=|a|^n\int_{B^n(0,\delta) \backslash  B^n(0,\varepsilon_\rho R_0)} \frac{1}{|x-x_\rho|^n}dx
  \\
  &=|a|^n \omega_{n-1}\left(\ln \delta-\ln \varepsilon_\rho R_0\right).
\end{align*}
\[\]
Then as $\rho\rightarrow C_n$, combining with (\ref{inequatily3}), (\ref{a}) and the results of ${\rm I,\ II,\ III}$, one can obtain the following inequality

\begin{equation}
\label{ineq4}
\begin{split}
    C_n c_\rho +C_n\frac{n^2}{n-1}\ln \delta-n C_n \ln\left(1+R_0^{\frac{n}{n-1}}\right)&+\mathop {\lim }\limits_{\rho \to C_n }\int_{B^n(0,\varepsilon_\rho^{-1})\backslash B^n(0,R_0)}  e^{\eta_\rho(y)}\eta_\rho(y)dy
    \\& \geq
    \omega_{n-1}\mathop {\lim }\limits_{\rho \to C_n }\frac{\left|-\frac{n^2}{n-1}\ln \delta-c_\rho-\ln\frac{1}{(1+R_0^{\frac{n}{n-1}})^n}\right|^n}
    {\left(\ln \delta-\ln \varepsilon_\rho R_0\right)^{n-1}}.
\end{split}
\end{equation}
\[\]
Obviously, $\mathop {\lim }\limits_{\rho \to C_n }\int_{B^n(0,\varepsilon_\rho^{-1})\backslash B^n(0,R_0)}  e^{\eta_\rho(y)}\eta_\rho(y)dy\leq 0$.
Hence, we conclude that

\begin{align*}
   & c_\rho +\frac{n^2}{n-1}\ln \delta-n \ln \left(1+R_0^{\frac{n}{n-1}}\right)   \leq \frac{n^2}{n-1}(\ln \delta-\ln \varepsilon_\rho R_0).
   \end{align*}
\[\]
Consequently, we can write  inequality (\ref{ineq4}) as

\[  \mathop {\lim }\limits_{\rho \to C_n }\left(c_\rho+\frac{n^2}{n-1}\ln \varepsilon_\rho+n \ln \big( \frac{R_0^{\frac{n}{n-1}}}{1+R_0^{\frac{n}{n-1}}}\big)+\int_{B^n(0,\varepsilon_\rho^{-1})\backslash B^n(0,R_0)}  e^{\eta_\rho(y)}\eta_\rho(y)dy  \right)\leq 0,
\]
\[\]
which contradicts with previous assumption,

$$c_\rho+\frac{n^2}{n-1}\ln \varepsilon_\rho+n \ln \big( \frac{R_0^{\frac{n}{n-1}}}{1+R_0^{\frac{n}{n-1}}}\big)+\int_{B^n(0,\varepsilon_\rho^{-1})\backslash B^n(0,R_0)}  e^{\eta_\rho(y)}\eta_\rho(y)dy>0.$$ \[\]
Thus, we accomplish the proof of the claim.
\medskip

By estimate \eqref{3-0}, $|\eta_\rho|\leq (\frac{\rho}{\omega_{n-1}})^{n-1}\ln |x|$ in $B^n(0,\varepsilon^{-1}_\rho)\backslash B^n(0,R)$.
Hence, it is easy to check that

$$\mathop {\lim }\limits_{\rho \to C_n }\int_{B^n(0,\varepsilon^{-1}_\rho)\backslash B^n(0,R)}e^{\eta_\rho}\eta_\rho dy= 0.$$
\[\]
To sum up,

\begin{align*}
\mathop {\lim }\limits_{R \to +\infty }\mathop {\lim }\limits_{\rho \to C_n }J_{C_n}(u_\rho)&\geq
\frac{1}{n C_n}\int_{\mathbb{R}^n}e^{\eta_0(y)}\eta_0(y)dy
 +\frac{n-1}{n}\ln \beta_n- \ln C_n.
\end{align*}

\end{proof}
\medskip

\emph{\bf Part 2}:
In this part, one can modify the standard solution to deduce an upper bound for $J_{C_n}$. Since the previous description about $\eta_0$, we easily obtain that $\tilde{\eta}_L(x):=\eta_0(\frac{x}{L})-n\ln L$ satisfies the equation

\begin{equation}
   { - \Delta_n \tilde{\eta}_L  =e^{\tilde{\eta}_L}\ \ {\rm in}\ \ B^n}.
\label{testfcn1}
\end{equation}
\[\]
We construct a test function sequence $\Phi_L:=\tilde{\eta}_L-\tilde{\eta}_L|_{\partial B^n}$. It is easy to check that $\Phi_L$ satisfies

\begin{equation}
\left\{ {\begin{array}{*{20}{c}}
   { - \Delta_n  \Phi_L  =e^{\tilde{\eta}_L}} & {{\rm in} \ B^n ,}  \\
   {\Phi_L = 0} & {\ \ {\rm on} \ \partial B^n }.  \\
\end{array}} \right.
\label{testfcn2}
\end{equation}
\[\]

Simple computations give that,

\begin{align*}
\mathop {\lim }\limits_{L \to 0}
J_{C_n}(\Phi_L)&=\mathop {\lim }\limits_{L \to 0 }\frac{1}{ n C_n}\int_{B^n} | \nabla  \Phi_L |^n dx- \ln \int_{B^n} e^ {\Phi_L}dx
  \\
  &=\mathop {\lim }\limits_{L \to 0}\frac{1}{ n C_n}\int_{B^n} e^{\tilde{\eta}_L}(\tilde{\eta}_L-\tilde{\eta}_L|_{\partial B^n}) dx- \ln \int_{B^n} e^ {\Phi_L}dx
  \\
  &=:\frac{1}{n C_n}(I_{11}-I_{12})-\ln I_2.
\end{align*}
For $I_{11}$, by the expression of $\tilde{\eta}_L$, we derive that

\begin{align*}
    I_{11}&=\mathop {\lim }\limits_{L \to 0 }\int_{B^n} e^{\tilde{\eta}_L}\tilde{\eta}_L dx
    =\mathop {\lim }\limits_{L \to 0 } \int_{B^n} e^{\eta_0(\frac{x}{L})-n \ln L}(\eta_0(\frac{x}{L})-n \ln L)dx
    \\&=\int_{\mathbb{R}^{n}}e^{\eta_0(y)}\eta_0(y)dy- nC_n\ln L.
\end{align*}
\[\]
For $I_{12}$, using the expression of $\tilde{\eta}_L$ again,
we have

\[I_{12}=\mathop {\lim }\limits_{L \to 0 }\int_{B^n} e^{\tilde{\eta}_L}\tilde{\eta}_L|_{\partial B^n} dx=\mathop {\lim }\limits_{L \to 0 }C_n\ln\frac{\beta_n}{L^n(1+|L|^{-\frac{n}{n-1}})^n}.\]
\[\]
Likewise, for $I_2$, we directly calculate

\[I_2=\mathop {\lim }\limits_{L \to 0 }\int_{B^n} e^ {\Phi_L}dx=\mathop {\lim }\limits_{L \to 0 }C_n \frac{L^n(1+|L|^{-\frac{n}{n-1}})^n}{\beta_n}.\]
\[\]
Combining the estimate $I_{11}$, $I_{12}$ and $I_2$, we conclude that

\begin{align*}
   \mathop {\lim }\limits_{L \to +\infty } J_{C_n}(\Phi_L)&=\mathop {\lim }\limits_{L \to 0}
   \frac{1}{ n C_n}\left(
 \int_{\mathbb{R}^{n}}e^{\eta_0(y)}\eta_0(y)dy-nC_n \ln L-C_n\ln\frac{\beta_n}{L^n(1+|L|^{-\frac{n}{n-1}})^n}\right)
    \\
    &\ \ \ -\ln C_n-\ln\frac{L^n(1+|L|^{-\frac{n}{n-1}})^n}{\beta_n}
    \\
    &=\frac{1}{nC_n}\int_{\mathbb{R}^{n}}e^{\eta_0(y)}\eta_0(y)dy+\frac{n-1}{n}\ln \beta_n- \ln C_n-\mathop {\lim }\limits_{L \to 0}\ln (1+|L|^{\frac{n}{n-1}})^{n-1}
     \\
     &=\frac{1}{nC_n}\int_{\mathbb{R}^{n}}e^{\eta_0(y)}\eta_0(y)dy+\frac{n-1}{n}\ln \beta_n- \ln C_n.
\end{align*}
\medskip

\section{The Proofs of Theorem \ref{thm3} and \ref{thm4}}

In this section, we shall establish the accurate lower bound of optimal concentration for the Moser-Onofri inequality on a general domain and give the criterion for the existence of extremals of the Moser-Onofri inequality. Since our methods are based on the $n$-harmonic transplantation, for reader's convenience, we also need to introduce some basic concepts and properties for $n$-capacity, Robin function and $n$-harmonic radius.
\medskip

\begin{definition}(Chapter 2 in \cite{Flucher}).
\label{def4.2}
The $n$-capacity of a set $A\subseteq \Omega$ with respect to $\Omega$ is defined as

\begin{equation}
    \label{eq4.1}
    n {\rm cap_\Omega}(A):=\inf\left\{\int_\Omega |\nabla u|^ndx: u\in W_0^{1,n}(\Omega), u\geq 1 \ {\rm on}\ A\right\}.
\end{equation}
\end{definition}
\[\]
We call $n{\rm mod}_\Omega(A):=n {\rm cap}_\Omega^{\frac{1}{1-n}}(A)$ the $n$-modulus of $A$ with respect to $\Omega$. A function which realizes the infimum \eqref{eq4.1} is called a $n$-capacity potential. The $n$-capacity potential satisfies equation

\begin{align*}
    -{\rm div} (|\nabla u|^{n-2}\nabla u) &=0\ \ {\rm in}\ \ \Omega\backslash A,
    \\
     u &=0\ \ {\rm in}\ \ \partial\Omega,
     \\
      u &=1\ \ {\rm in}\ \ \bar{A}.
\end{align*}
\[\]
Integration by parts leads to the boundary integral representation

\begin{equation}
    \label{4.3}
    n {\rm cap}_\Omega(A)= \int_{\partial A} |\nabla u|^{n-1}d\sigma.
\end{equation}
\medskip

\begin{definition}
\label{4-2def}
The Green function of $n$-Laplacian operator with the singularity at $x_0$ on the bounded domain is defined as the singular solution of Dirichlet problem

\begin{equation}\begin{cases}
-\Delta_n G_{x_0}(y)=\delta_{x_0}(y),\ \ y\in \Omega,\\
\ \ \ \ \ \  G_{x_0}(y)=0,\ \ \ \ \ \ \ y\in \partial\Omega.
\end{cases}\end{equation}
\[\]
The Green function of $n$-Laplacian operator can be decomposed into singular part and a regular part:

$$G_{x_0}(y)=K(|y-x_0|)-H_{x_0, \Omega}(y),\ \ K(|y-x_0|)=-\frac{n}{\alpha_n}\log(|y-x_0|).$$
\[\]
The regular part of the Green function of $n$-Laplacian operator on the bounded domain $\Omega$ evaluated at singularity $x_0$:

$$\tau_{\Omega}(x_0)=H_{x_0,\Omega}(x_0)$$
\[\]
is called the $n$-Robin function of $\Omega$ at $x_0$.
The $n$-harmonic radius $\rho_{\Omega}(x_0)$ at $x_0$ is defined by the relation

$$-\frac{n}{\alpha_n}\log (\rho_\Omega(x_0))=H_{x_0, \Omega}(x_0).$$
\end{definition}
\medskip

\begin{definition}
Define by $G_0$ the Green function of $n$-Laplacian operator on $B^n(0,r)$ with the singularity at $0$. For every positive radial function $U=\Phi\circ G_0(y): B^n(0,r)\rightarrow \mathbb{R}^{+}$ and $x_0\in \Omega$, we associate $u: \Phi\circ G_{x_0}(y):\Omega\rightarrow \mathbb{R}^{+}$. This transformation is called $n$-harmonic transplantation from $(B^n(0,r),0)$  to $(\Omega, x_0)$.
\end{definition}
\medskip

\begin{proposition}\label{pro4.4}
The $n$-harmonic transplantation has the following properties:
\medskip

(1) It preserves the $n$-Dirichlet-energy,

$$\int_{\Omega}|\nabla u|^ndx=\int_{B^n(0,r)}|\nabla U|^ndx.$$
\[\]
(2) If $r=\rho_{\Omega}(x_0)$, then

$$\int_{\Omega}F(u)dx\geq \int_{B^n(0,r)}F(U)dx=\rho_{\Omega}^n(x_0) \int_{B^n}F(U)dx.$$
\[\]
(3) If $F(U_k) \rightharpoonup c_0 \delta_0$ in the sense of measure, then $F(u_k)\rightharpoonup c_0 \delta_{x_0}$ in the sense of measure.
\end{proposition}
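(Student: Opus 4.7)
Because both $u$ and $U$ are compositions of a single scalar function $\Phi$ with Green functions of $-\Delta_n$, all three properties reduce to one-dimensional identities in the Green parameter. The central tools are the co-area formula and the flux identity
$$\int_{\{G=s\}}|\nabla G|^{n-1} d\sigma = 1, \qquad s>0,$$
valid for the Green function of $-\Delta_n$ with singularity at any interior point; this follows from integrating $-\Delta_n G=\delta$ over the superlevel set $\{G\geq s\}$ and applying the divergence theorem, using that the outward normal on $\{G=s\}$ is $-\nabla G/|\nabla G|$. For property (1), writing $v=G_{x_0}$ and $\nabla u=\Phi'(v)\nabla v$, the co-area formula yields
$$\int_\Omega |\nabla u|^n dx = \int_0^\infty |\Phi'(s)|^n \!\!\int_{\{v=s\}}|\nabla v|^{n-1} d\sigma\, ds = \int_0^\infty |\Phi'(s)|^n ds,$$
via the flux identity. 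The same reduction on $B^n(0,r)$ produces the identical one-dimensional integral, so the two Dirichlet energies agree.

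For property (2), the layer-cake formula gives $\int_\Omega F(u)dx = \int_0^\infty F'(t)|\{u>t\}|_\Omega\, dt$ and similarly on the ball, so after substituting $s=\Phi^{-1}(t)$ the inequality reduces to the pointwise superlevel-set comparison
$$|\{G_{x_0}\geq s\}|_\Omega \geq |\{G_0\geq s\}|_{B^n(0,r)}, \qquad s>0. \quad (\ast)$$
Both superlevel sets carry identical $n$-capacity equal to $s^{1-n}$: apply the boundary representation \eqref{4.3} to the truncated $n$-capacity potential $\min(v/s,1)$ and invoke the flux identity. The calibration $r=\rho_\Omega(x_0)$ forces the singular and regular parts of $G_{x_0}$ and $G_0$ to normalize identically at their singularities, and an isocapacitary/rearrangement inequality adapted to the $n$-Laplacian then produces $(\ast)$. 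The final equality in (2) is the pure rescaling $y=\rho_\Omega(x_0) x$, whose Jacobian contributes the factor $\rho_\Omega^n(x_0)$.

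For property (3), I would combine (2) with a concentration-by-level-sets observation: because $G_{x_0}$ is bounded on compacta of $\Omega\setminus\{x_0\}$, for every $\varepsilon>0$ one has $\{u_k>t\}\subset B^n(x_0,\varepsilon)$ once $t$ is sufficiently large, and symmetrically $\{U_k>t\}\subset B^n(0,\varepsilon)$ on the ball side. Testing $F(u_k)$ against $\varphi\in C_c(\Omega)$ and splitting at a level $t_\varepsilon$, the outer contribution is controlled by matching superlevel sets with those of $F(U_k)$ outside $B^n(0,\varepsilon)$, which vanish by hypothesis; the inner contribution reduces to $\varphi(x_0)$ times the concentrated mass, producing the claim $F(u_k)\rightharpoonup c_0\delta_{x_0}$. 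The main obstacle of the whole proposition is the isocapacitary inequality $(\ast)$ underlying (2): it asserts that among Green functions with prescribed Robin-constant normalization, the ball of $n$-harmonic radius $\rho_\Omega(x_0)$ minimizes every superlevel-set volume. Verifying $(\ast)$ will likely require a Polya-Szegő-type rearrangement argument adapted to the $n$-Laplacian, or a direct invocation of the isocapacitary results in Flucher's monograph; the remaining steps are then essentially one-dimensional bookkeeping.
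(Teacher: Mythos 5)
A preliminary remark: the paper itself gives no proof of Proposition \ref{pro4.4}; it is quoted as background on $n$-harmonic transplantation from Flucher's monograph \cite{Flucher}, so your attempt is being measured against the standard argument in that reference rather than against anything written in the text. Your part (1) is correct and complete: the coarea formula plus the flux identity $\int_{\{G=s\}}|\nabla G|^{n-1}\,d\sigma=1$ reduces both Dirichlet integrals to $\int_0^\infty|\Phi'(s)|^n\,ds$, which is exactly the standard proof.

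The genuine gaps are in (2) and (3). For (2) you correctly reduce to the superlevel-set volume comparison $(\ast)$, but you then leave it unproved, and the intermediate reasoning you offer is not sufficient: the fact that $\{G_{x_0}\ge s\}$ and $\{G_0\ge s\}$ carry the same relative $n$-capacity $s^{1-n}$ cannot by itself ``produce $(\ast)$'', since these capacities are relative to different domains and equality of capacities does not order volumes. What actually closes the argument is the combination of the isoperimetric inequality, H\"older's inequality on level sets and the coarea formula, which give the differential inequality $-V'(s)\ge \alpha_n V(s)$ for $V(s)=|\{G_{x_0}>s\}|$, together with the asymptotics $V(s)e^{\alpha_n s}\to \tfrac{\omega_{n-1}}{n}\rho_\Omega^n(x_0)$ as $s\to\infty$ — this is precisely where the calibration $r=\rho_\Omega(x_0)$ and the continuity of the regular part $H_{x_0,\Omega}$ enter; integrating the differential inequality from $s$ to $\infty$ then yields $(\ast)$. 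Since you yourself identify $(\ast)$ as the main obstacle of the whole proposition, deferring it to an unspecified Polya--Szeg\H{o}-type or isocapacitary argument leaves the proof incomplete. For (3), the sketch fails as written: the comparison inherited from (2) goes the wrong way for the outer region, because the superlevel sets of $G_{x_0}$ are \emph{larger} than those of $G_0$, so ``matching superlevel sets'' does not dominate $\int_{\Omega\setminus B^n(x_0,\delta)}F(u_k)\,dx$ by the corresponding ball quantity (the low-level distribution measure $-\,d|\{G_{x_0}>s\}|$ may be much larger than $-\,d|\{G_0>s\}|$, e.g.\ near critical values of $G_{x_0}$), and your argument also never shows that the mass concentrating at $x_0$ equals exactly $c_0$, since (2) only provides a lower bound. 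A correct proof of (3) needs the two-sided asymptotic matching $|\{G_{x_0}>s\}|=(1+o(1))\,|\{G_0>s\}|$ as $s\to\infty$ for the inner mass, plus a genuine argument (not a consequence of $(\ast)$) for the vanishing of the outer mass.
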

\medskip

\begin{proposition}
\label{prop4.4}(Theorem 9.5 of Chapter 9 in \cite{Flucher})
If the sets $(A_\varepsilon)$ concentrate at a point $x_0\in \Omega\cap \widetilde{\Omega}$ in the sense $A_\varepsilon\subseteq B^n(x_0, r_\varepsilon)$ with $r_\varepsilon\rightarrow 0$, then

 \[n{\rm mod}_\Omega(A_\varepsilon)=n{\rm mod}_{\widetilde{\Omega}}(A_\varepsilon)+\tau_{\widetilde{\Omega}}(x_0)-\tau_\Omega(x_0)+o(1)\]
 \[\]
 as $\varepsilon\rightarrow 0$.
\end{proposition}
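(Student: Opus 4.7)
My strategy is to show that
\[
n{\rm mod}_\Omega(A_\varepsilon) = -\tfrac{n}{\alpha_n}\log r_\varepsilon - \tau_\Omega(x_0) + o(1)
\]
as $\varepsilon\to 0$, and similarly for $\widetilde\Omega$; subtracting the two asymptotics then immediately cancels the $\log r_\varepsilon$ term and yields the proposition, since the shape-dependence of $A_\varepsilon$ inside $B^n(x_0,r_\varepsilon)$ enters only through the $o(1)$ error and so drops out.

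First I would introduce the $n$-capacity potentials $u_\varepsilon^\Omega\in W_0^{1,n}(\Omega)$ and $u_\varepsilon^{\widetilde\Omega}\in W_0^{1,n}(\widetilde\Omega)$ realizing the infima in Definition \ref{def4.2}. Each is the unique $n$-harmonic function in its annular region with boundary values $1$ on $\partial A_\varepsilon$ and $0$ on the respective outer boundary. Using the $n$-harmonicity and the divergence theorem, the boundary-integral representation \eqref{4.3} can be transported from $\partial A_\varepsilon$ to any surface enclosing $A_\varepsilon$; I would choose the sphere $\partial B^n(x_0,r_\varepsilon)$, on which the Green function admits the cleanest expansion.

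Next I would compare $u_\varepsilon^\Omega$ with $G^\Omega_{x_0}/M_\varepsilon^\Omega$, where $M_\varepsilon^\Omega := \sup_{\partial B^n(x_0,r_\varepsilon)} G^\Omega_{x_0}$. Both functions are $n$-harmonic in $\Omega\setminus \overline{B^n(x_0,r_\varepsilon)}$ and vanish on $\partial\Omega$; on $\partial B^n(x_0,r_\varepsilon)$ they agree up to $o(1)$, because the decomposition $G^\Omega_{x_0}(y)=-\tfrac{n}{\alpha_n}\log|y-x_0|-H_{x_0,\Omega}(y)$ together with the continuity of $H_{x_0,\Omega}$ at $x_0$ yields
\[
G^\Omega_{x_0}(y)= -\tfrac{n}{\alpha_n}\log r_\varepsilon - \tau_\Omega(x_0) + o(1) \quad\text{uniformly on }\partial B^n(x_0,r_\varepsilon).
\]
The strong comparison principle and the $C^{1,\alpha}$-stability for the $n$-harmonic Dirichlet problem (see \cite{Li,DiBenedetto,Serrin}) then force $u_\varepsilon^\Omega = G^\Omega_{x_0}/M_\varepsilon^\Omega + o(1)$ in $C^1$ uniformly on a cross-section away from $x_0$. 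Substituting this into the boundary-integral representation of $n{\rm cap}_\Omega(A_\varepsilon)$ and using $\int_{\partial B^n(x_0,r_\varepsilon)} |\nabla G^\Omega_{x_0}|^{n-1}d\sigma = 1$ (from $-\Delta_n G^\Omega_{x_0}=\delta_{x_0}$ and the divergence theorem), one obtains $n{\rm cap}_\Omega(A_\varepsilon) = (M_\varepsilon^\Omega)^{-(n-1)}(1+o(1))$, whence $n{\rm mod}_\Omega(A_\varepsilon) = M_\varepsilon^\Omega + o(1)$, which is the desired expansion.

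Running the identical argument in $\widetilde\Omega$ and subtracting produces the claim. The principal obstacle is the comparison step: because $\Delta_n$ is nonlinear, the difference $u_\varepsilon^\Omega - G^\Omega_{x_0}/M_\varepsilon^\Omega$ is not itself $n$-harmonic, so one cannot simply invoke a linear maximum principle. Instead one must upgrade the $o(1)$ match of boundary values on $\partial B^n(x_0,r_\varepsilon)$ to an $o(1)$ match of the gradient magnitudes using the strong comparison principle together with the uniform $C^{1,\alpha}$ bounds of \cite{Li,DiBenedetto,Serrin}. A secondary delicate point is verifying that the geometry of $A_\varepsilon$ inside $B^n(x_0,r_\varepsilon)$ influences only the $o(1)$ remainder; this is the conceptual reason the subtraction produces a clean formula independent of the particular sequence $A_\varepsilon$.
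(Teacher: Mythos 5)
The paper itself offers no proof of this proposition --- it is quoted directly from Flucher (Theorem 9.5, Chapter 9) --- so your argument must stand on its own, and as written it does not. The pivotal step, the individual expansion $n{\rm mod}_\Omega(A_\varepsilon)=-\tfrac{n}{\alpha_n}\log r_\varepsilon-\tau_\Omega(x_0)+o(1)$, is false under the stated hypothesis: the only assumption is $A_\varepsilon\subseteq B^n(x_0,r_\varepsilon)$, and nothing ties the capacity of $A_\varepsilon$ to that of the ball containing it. Concretely, take $A_\varepsilon=B^n(x_0,r_\varepsilon^2)$; then $n{\rm mod}_\Omega(A_\varepsilon)=\tfrac{2n}{\alpha_n}\log(1/r_\varepsilon)-\tau_\Omega(x_0)+o(1)$, which disagrees with your claimed expansion at leading order (by $\tfrac{n}{\alpha_n}\log(1/r_\varepsilon)\to\infty$). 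Thus the geometry of $A_\varepsilon$ inside the ball enters each modulus at leading order, not merely through the $o(1)$ remainder; what is shape-independent is only the \emph{difference} of the two moduli, which is exactly the content of the proposition and cannot be reached by proving a shape-independent expansion for each modulus separately. The same defect surfaces in your comparison step: the capacity potential $u_\varepsilon^\Omega$ equals $1$ on $\partial A_\varepsilon$, not on $\partial B^n(x_0,r_\varepsilon)$; for a set much smaller than the ball its values on that sphere are not close to $1$ (in the example above they are near $1/2$), so the claimed $o(1)$ boundary match with $G^\Omega_{x_0}/M_\varepsilon^\Omega$ fails, and with it the conclusion $n{\rm cap}_\Omega(A_\varepsilon)=(M_\varepsilon^\Omega)^{-(n-1)}(1+o(1))$.

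The way to repair the argument is to compare the two domains through a common intermediate ball rather than through $r_\varepsilon$. For fixed small $\rho$ one proves a Gr\"otzsch-type series law: a lower bound $n{\rm mod}_\Omega(A_\varepsilon)\ge n{\rm mod}_{B^n(x_0,\rho)}(A_\varepsilon)+n{\rm mod}_\Omega(B^n(x_0,\rho))$ by a flux/H\"older argument across the sphere $\partial B^n(x_0,\rho)$, and a matching upper bound, up to errors vanishing as $\varepsilon\to0$, by splicing the capacity potential of $A_\varepsilon$ in $B^n(x_0,\rho)$ (suitably truncated and renormalized, the cost being negligible because $n{\rm mod}_{B^n(x_0,\rho)}(A_\varepsilon)\to\infty$) with the capacity potential of $B^n(x_0,\rho)$ in $\Omega$. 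Writing the same decomposition in $\widetilde\Omega$ and subtracting, the shape-dependent term $n{\rm mod}_{B^n(x_0,\rho)}(A_\varepsilon)$ cancels, and one is left with $n{\rm mod}_\Omega(B^n(x_0,\rho))-n{\rm mod}_{\widetilde\Omega}(B^n(x_0,\rho))\to\tau_{\widetilde\Omega}(x_0)-\tau_\Omega(x_0)$ as $\rho\to0$. It is in this last, ball-versus-ball computation that your Green-function asymptotics and the flux normalization $\int_{\partial B^n(x_0,\rho)}|\nabla G^\Omega_{x_0}|^{n-2}\,|\partial_\nu G^\Omega_{x_0}|\,d\sigma=1$ are correct and genuinely useful, since for a ball the capacity potential is indeed asymptotic to the normalized Green function; but without the intermediate decomposition your scheme proves a statement that is false for general concentrating sets.
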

\medskip

Now, we are in the position to give the accurate lower bound of optimal concentration for Moser-Onofri inequality on a general domain, namely, we shall provide the proof of Theorem \ref{thm3}. We first claim a basic fact that can be inferred from the proof of Theorem \ref{Th1.2th}:

$$F_{B^n}^{loc}(0)=\inf_{u\in W^{1,n}_0(B^n)}\Big(\frac{1}{n C_n}\int_{B^n} {|\nabla u|^n}dx - \ln \int_{B^n}{e^u} dx\Big).$$
\[\]
Indeed, since

$$\inf_{u\in W^{1,n}_0(B^n)}\Big(\frac{1}{n C_n}\int_{B^n} {|\nabla u|^n}dx - \ln \int_{B^n}{e^u} dx\Big)$$
\[\]
can not be achieved, if we define $w_k$ as the extremal function of

$$\inf_{u\in W^{1,n}_0(B^n)}\Big(\frac{1}{n(C_n-\epsilon_k)}\int_{B^n} {|\nabla u|^n}dx - \ln \int_{B^n}{e^u} dx\Big)$$
\[\]
with the $\epsilon_k\rightarrow 0$, then from the proof of Theorem \ref{Th1.2th}, we see that

$$\int_{B^n}e^{w_k}dx\rightarrow +\infty,\ \ \frac{e^{w_k}dx}{\int_{B^n}e^{w_k}dx}\rightharpoonup \delta_{0}$$
\[\]
and
\begin{equation}\begin{split}
\inf_{u\in W^{1,n}_0(B^n)}\Big(\frac{1}{nC_n}\int_{B^n} {|\nabla u|^n}dx - \ln \int_{B^n}{e^u} dx\Big)
&\ \ =\lim\limits_{k\rightarrow +\infty}\Big(\frac{1}{n C_n}\int_{B^n} {|\nabla w_k|^n}dx - \ln \int_{B^n}{e^{w_k}} dx\Big)\\
&\ \ \geq \frac{1}{nC_n}\int_{\mathbb{R}^{n}}e^{\eta_0(y)}\eta_0(y)dy+\frac{n-1}{n}\ln \beta_n- \ln C_n.
\end{split}\end{equation}
\[\]
Recall the \emph{Part 2} of the proof of Theorem \ref{Th1.2th}, we construct the suitable test function sequences $\Phi_{L}$ satisfying

$$\int_{B^n}e^{\Phi_L}dx\rightarrow +\infty,\ \ \frac{e^{\Phi_L}dx}{\int_{B^n}e^{\Phi_L}dx}\rightharpoonup \delta_{0}$$
\[\]
such that

$$\lim\limits_{k\rightarrow +\infty}\Big(\frac{1}{n C_n}\int_{B^n} {|\nabla \Phi_L|^n}dx - \ln \int_{B^n}{e^{\Phi_{L}}} dx\Big)=
\frac{1}{nC_n}\int_{\mathbb{R}^{n}}e^{\eta_0(y)}\eta_0(y)dy+\frac{n-1}{n}\ln \beta_n- \ln C_n.$$
\[\]
Combining the above estimate, we derive that
\begin{equation}\begin{split}\label{ad1}
 \inf_{u\in W^{1,n}_0(B^n)}\Big(\frac{1}{nC_n}\int_{B^n} {|\nabla u|^n}dx - \ln \int_{B^n}{e^u} dx\Big)=F_{B^n}^{loc}(0).
 \end{split}\end{equation}
A simple change of variable: $x\rightarrow x_0+Rx$ will directly yield
\begin{equation}\label{ad2}
F_{B^n(x_0,R)}^{loc}(x_0)=R^nF_{B^n}^{loc}(0).
\end{equation}
\medskip

\medskip

Now we start the proof of Theorem \ref{thm3}. Assume that $u_k\in W^{1,n}_0(\Omega)$ satisfies

$$\lim\limits_{k\rightarrow +\infty}\int_{\Omega}e^{u_k}dx=+\infty,\ \frac{e^{u_k}dx}{\int_{\Omega}e^{u_k}dx}\rightharpoonup \delta_{x_0}.$$
\[\]
Through Proposition \ref{pro4.4}, we see that

 $$\int_{\Omega}|\nabla u_k|^ndx=\int_{\Omega}|\nabla U_k|^2dx,\ \ \int_{\Omega}e^{u_k}dx\geq\rho^n_{\Omega}(x_0)\int_{B^n}e^{U_k}dx,\ \ \frac{e^{U_k}dx}{\int_{\Omega}e^{U_k}dx}\rightharpoonup \delta_{0}.$$
\[\]
Then we deduce that

{\small\begin{equation}\begin{split}
&\inf \left\{\lim\limits_{k\rightarrow +\infty}\Big(\frac{1}{n C_n}\int_{\Omega} {|\nabla u_k|^n}dx - \ln \int_{\Omega}{e^{u_k}} dx\Big)\ |\ \lim\limits_{k\rightarrow +\infty}\int_{\Omega}e^{u_k}dx=+\infty,\ \frac{e^{u_k}dx}{\int_{\Omega}e^{u_k}dx}\rightharpoonup \delta_{x_0}\right\}\\
&\ \ \leq \inf \left\{\lim\limits_{k\rightarrow +\infty}\Big(\frac{1}{n C_n}\int_{B^n} {|\nabla U_k|^n}dx - \ln \int_{B^n}{e^{U_k}} dx\Big)-n\ln \rho_{\Omega}(x_0)\ |\ \lim\limits_{k\rightarrow +\infty}\int_{B^n}e^{U_k}dx=+\infty,\ \frac{e^{U_k}dx}{\int_{B^n}e^{U_k}dx}\rightharpoonup \delta_{0}\right\}\\
&\ \ =\inf_{u\in W^{1,n}_0(B^n)}\Big(\frac{1}{n C_n}\int_{B^n} {|\nabla u|^n} - \ln \int_{B^n}{e^u} dx\Big)-n\ln \rho_{\Omega}(x_0).
\end{split}\end{equation}}
\[\]
Thus, in order to obtain our desired result, we just need to prove

\begin{equation}\begin{split}
&\lim\limits_{k\rightarrow +\infty}\Big(\frac{1}{n C_n}\int_{\Omega} {|\nabla u_k|^n}dx - \ln \int_{\Omega}{e^{u_k}} dx\Big)\\
&\ \ \geq \inf_{u\in W^{1,n}_0(B^n)}\Big(\frac{1}{n C_n}\int_{B^n} {|\nabla u|^n} - \ln \int_{B^n}{e^u} dx\Big)-n\ln \rho_{\Omega}(x_0).
\end{split}\end{equation}
\[\]
Since $\lim\limits_{k\rightarrow +\infty}\int_{\Omega}e^{u_k}dx=+\infty$, one can easily check that

\begin{equation}\begin{split}
&\lim\limits_{k\rightarrow +\infty}\Big(\frac{1}{n C_n}\int_{\Omega} {|\nabla u_k|^n}dx - \ln \int_{\Omega}{e^{u_k}} dx\Big)\\
&\ \ =\lim\limits_{k\rightarrow +\infty}\Big(\frac{1}{n C_n}\int_{\Omega} {|\nabla u_k|^n}dx-\ln \big(\int_{\{|u_k|\leq 1\}}e^{u_k}dx+\int_{\{|u_k|\geq 1\}}e^{u_k}dx\big)\Big)\\
&\ \ =\lim\limits_{k\rightarrow +\infty}\Big(\frac{1}{n C_n}\int_{\Omega} {|\nabla u_k|^n}dx-\ln \int_{\{|u_k|\geq 1\}}e^{u_k}dx\Big)
\end{split}\end{equation}
where the last inequality holds since $\lim\limits_{k\rightarrow+\infty}\int_{\Omega}e^{u_k}dx=+\infty$ and
$\int_{\{u_k\leq 1\}}e^{u_k}dx$ is bounded. On the other hand, since $$\frac{e^{u_k}dx}{\int_{\Omega}e^{u_k}dx}\rightharpoonup \delta_{x_0},$$
\[\]
we find that there exists $r_k\rightarrow 0$ such that $A_{k}\triangleq\{u_k\geq 1\}$ is included in $B_{r_k}(x_0)$.
Then we can replace $u_k$ below  level $1$ with the $n$-capacity potential of $A_k$. The resulting function is denoted by $v_k$. We apply the change
of the domain formula (Proposition \ref{prop4.4}) with
$\widetilde{\Omega}= B^n(x_0, \rho_{\Omega}(x_0))$ such that $n{\rm  mod}_{\widetilde\Omega}(A_k) =
n{\rm  mod}_{ \Omega}(A_k) +o(1)$. By the logarithmic structure of the fundamental singularity, a change of order $o(1)$ in the radius of $\Omega$ leads to a change of the same order in  the $n$-modulus. Thus, we can achieve that

\[n{\rm mod}_{B^n(x_0,\rho_\Omega(x_0)+o(1))}(A_k)= n{\rm mod}_{\Omega}(A_k)\]
\[\]
by increasing the radius of $\widetilde \Omega$ by $o(1)$. Hence, we deduce that
$$\int_{\Omega}|\nabla v_k|^2dx=\int_{B^n(x_0,\rho_\Omega(x_0)+o(1))}|\nabla w_k|^2dx,$$
where $w_k=v_k$ in $A_k$ and $w_k$ is the capacity potential of $ncap_{B^n(x_0,\rho_\Omega(x_0)+o(1))}(A_k)$ in $B^n(x_0,\rho_\Omega(x_0)+o(1))\setminus A_k$. Then it follows that

\begin{align*}
&\lim\limits_{k\rightarrow +\infty}\Big(\frac{1}{n C_n}\int_{\Omega} {|\nabla u_k|^n}dx - \ln \int_{\Omega}{e^{u_k}} dx\Big)\\
&\ \ =\lim\limits_{k\rightarrow +\infty}\Big(\frac{1}{n C_n}\int_{\Omega} {|\nabla u_k|^n}dx - \ln \int_{A_{k}}{e^{u_k}} dx\Big)\\
&\ \ \geq \lim\limits_{k\rightarrow +\infty}\Big(\frac{1}{n C_n}\int_{\Omega} {|\nabla v_k|^n}dx - \ln \int_{A_k}{e^{v_k}} dx\Big)\\
& \ \ \geq \lim\limits_{k\rightarrow +\infty}\Big(\frac{1}{n C_n}\int_{B^n(x_0,\rho_\Omega(x_0)+o_k(1))} {|\nabla w_k|^n}dx - \ln \int_{B^n(x_0,\rho_\Omega(x_0)+o_k(1))}{e^{w_k}} dx\Big)\\
&\ \ \geq \lim\limits_{k\rightarrow +\infty} F^{loc}_{B^n(0,\rho_\Omega(x_0)+o_k(1))}(x_0)\\
&\ \ =\inf_{u\in W^{1,n}_0(B^n)}\Big(\frac{1}{n C_n}\int_{B^n} {|\nabla u|^n}dx - \ln \int_{B^n}{e^u} dx\Big)-n\ln \rho_{\Omega}(x_0),
\end{align*}
where the last equality holds through \eqref{ad1} and \eqref{ad2}. Then we accomplish the proof of Theorem \ref{thm3}.
\medskip

\medskip

Now, we give the proof of Theorem \ref{thm4}. Let $u_k$ denote the extremal of subcritical Moser-Onofri inequality on a general domain $\Omega$:

$$\inf_{u\in W^{1,n}_0(\Omega)}\Big(\frac{1}{n (C_n-\epsilon_k)}\int_{\Omega} {|\nabla u|^n} - \ln \int_{\Omega}{e^u} dx\Big)$$
\[\]
with $\epsilon_k\rightarrow 0$. Then it is not difficult to check that

$$\lim\limits_{k\rightarrow +\infty}\Big(\frac{1}{n C_n}\int_{\Omega} {|\nabla u_k|^n} - \ln \int_{\Omega}{e^{u_k}} dx\Big)=C(n,\Omega).$$
\[\]
If $u_k$ is unbounded in $L^{\infty}(\Omega)$, arguing as what we did in Theorem \ref{Th1.2th}, we can derive that

 $$\lim\limits_{k\rightarrow +\infty}\int_{\Omega}e^{u_k}dx=+\infty,\ \
\frac{e^{u_k}}{\int_{\Omega}e^{u_k}dx} \rightharpoonup \delta_{x_0}.$$
\[\]
According to the definition of $F_{\Omega}^{loc}(x_0)$, we immediately conclude that

$$\lim\limits_{k\rightarrow +\infty}\Big(\frac{1}{nC_n}\int_{\Omega} {|\nabla u_k|^n} - \ln \int_{\Omega}{e^{u_k}} dx\Big)\geq F_{\Omega}^{loc}(x_0).$$
\[\]
In view of Theorem \ref{thm3}, we derive that

\begin{equation*}\begin{split}
&\lim\limits_{k\rightarrow +\infty}\Big(\frac{1}{n C_n}\int_{\Omega} {|\nabla u_k|^n} - \ln \int_{\Omega}{e^{u_k}} dx\Big)\\
&\ \ \geq \inf_{u\in W^{1,n}_0(B^n)}\Big(\frac{1}{n C_n}\int_{B^n} {|\nabla u|^n} - \ln \int_{B^n}{e^u} dx\Big)-n\sup_{x_0\in \Omega}\ln \rho_{\Omega}(x_0),
\end{split}\end{equation*}
\[\]
which contradicts with the assumption

$$C(n,\Omega)<\inf_{u\in W^{1,n}_0(B^n)}\Big(\frac{1}{n C_n}\int_{B^n} {|\nabla u|^n} - \ln \int_{B^n}{e^u} dx\Big)-n\sup_{x_0\in \Omega}\ln \rho_{\Omega}(x_0).$$
\[\]
Hence $u_k$ is bounded in $L^{\infty}(\Omega)$, it follows from the regular estimate for quasilinear operator that there exists $u\in W^{1,n}_{0}(\Omega)$ such that $u_k\rightarrow u$ in $C^{1}(\Omega)$ and

$$C(n,\Omega)=\lim\limits_{k\rightarrow +\infty}\Big(\frac{1}{n C_n}\int_{\Omega} {|\nabla u_k|^n} - \ln \int_{\Omega}{e^{u_k}} dx\Big)=\frac{1}{n C_n}\int_{\Omega} {|\nabla u|^n} - \ln \int_{\Omega}{e^u} dx.$$
\[\]
Then the proof of Theorem \ref{thm4} is accomplished.
\vskip0.1cm

{\bf Acknowledgement:}
\medskip

The authors wish to thank the referee for many constructive comments, suggestions
which have improved the exposition of the paper.
\medskip

{\bf Conflict of interest:} On behalf of all authors, the corresponding author states that there is no conflict of
interest.
\medskip

{\bf Data availability:} Data sharing not applicable to this article as no datasets were generated
or analysed during the current study.


\begin{thebibliography}{99}
\bibitem {Bahri}
A. Bahri, J. M. Coron, \textit{Sur une \'{e}quation elliptique non lineaire avec I'exposant critique de Sobolev}, CR Acad. Sci. Paris.  \textbf{301} (1985) 345-348.

\bibitem{Bec}
W. Beckner, \textit{Sharp Sobolev inequalities on the sphere and the Moser-Trudinger inequality}, Ann. Math. \textbf{138} (1993) 213-242.

\bibitem {Brezis}
H. Brezis, F. Merle, \textit{Uniform estimates and blow-up behavior for solutions of $-\Delta u=V(x)e^u$ in two dimensions}, Comm. Partial Differential Equations. \textbf{16} (1991) 1223-1253.

\bibitem{Caglioti}
E. Caglioti, P. Lions, C. Marchioro and M. Pulvirenti, \textit{A special class of stationary flows for two-dimensional Euler equations: A statistical mechanics description}, Commun. Math. Phys. \textbf{143} (1992) 501-525.

\bibitem {Chang1}
A. Chang, C. Chen and C. Lin, \textit{Extremal functions for a mean field equation in two dimension},
    New Stud. Adv. Math. \textbf{2} (2003) 61-93.

\bibitem{CohnLu1}
W. S. Cohn, G. Lu, \textit{Best constants for Moser-Trudinger inequalities on the Heisenberg group}, Indiana Univ. Math. J. \textbf{50} (2001) 1567-1591.

\bibitem{CohnLu2}
W. S. Cohn, G. Lu, \textit{Sharp constants for Moser-Trudinger inequalities on spheres in complex space $C^n$}, Commun. Pure Appl. Math. \textbf{57}, (2004) 1458-1493.

\bibitem{DiBenedetto}
 E. DiBenedetto, \textit{$C^{1,\alpha}$ local regularity of weak solutions of degenerate elliptic equations}, Nonlinear Anal.  \textbf{7} (1983) 827-850.

\bibitem {Ding}
W. Ding, J. Jost, J. Li and G. Wang, \textit{Existence results for mean field equations}, Ann. Inst. H. Poincare Anal. Non Lineaire. \textbf{16} (1999) 653-666.

\bibitem {PEspo1}
P. Esposito, \textit{A classification result for the quasi-linear Liouville equation}, Ann. Inst. H. Poincare Anal. Non Lineaire. \textbf{35} (2018) 781-801.

\bibitem {Flucher}
M. Flucher,  \textit{Concentration Compactness Alternatives. Variational Problems with Concentration}, Progress in Nonlinear Differential Equations and Their Applications. \textbf{36}  (1999) 23-33.

\bibitem{HKP}
J. Hong, Y. Kim, P. Pac, \textit{Multivortex solutions of the Abelian Chern-Simons-Higgs theory}, Phys. Rev. Lett. \textbf{64} (1990) 2230-2233.

\bibitem{JW}
 R. Jackiw, E. Weinberg, \textit{Self-dual Chern-Simons vortices}, Phys. Rev. Lett. \textbf{64} (1990) 2234-2237.

\bibitem {Li}
 Y. X. Li, \textit{Extremal functions for the Moser-Trudinger inequalities on compact Riemannian manifolds}, Sci. China Ser. A-Math. \textbf{48} (2005) 618-648.

\bibitem {Li-Ruf}
 Y. X. Li, B. Ruf, \textit{A sharp Moser-Trudinger type inequality for unbounded domains in $\mathbb{R}^n$}, Indiana Univ. Math. \textbf{57} (2008) 451-480.
\bibitem {Li2}
 Y. Li, I. Shafrir, \textit{Blow-up analysis for solutions of $-\Delta u=Ve^u$ in dimension two}, Indiana Univ. Math. \textbf{43} (1994) 1255-1270.

\bibitem {Lin}
C. Lin, J. Wei, \textit{Locating the peaks of solutions via the maximum principle II: a local version of the method of moving planes}, Comm.  Pure  Appl. Math. \textbf{56} (2003) 784-809.
\bibitem {Lorca} S. Lorca, B. Ruf, P. Ubilla, \textit{A priori bounds for superlinear problems involving the N-Laplacian}, J. Differential Equations. \textbf{246} (2009) 2039-2054.
\bibitem {Wei3} L. Ma, J. Wei, \textit{Convergence for a Liouville equation}, Comment. Math. Helv. \textbf{76} (2001) 506-514.
\bibitem {Malchiodi} A. Malchiodi, \textit{Topological methods for an elliptic equation with exponential nonlinearities}, Discrete Contin. Dyn. Syst. \textbf{21} (2008) 277-294.
\bibitem {Moser1} J. Moser, \textit{A sharp form of an inequality by N. Trudinger}, Indiana Univ. Math. \textbf{20} (1971) 1077-1092.
\bibitem {Nagasaki} K. Nagasaki, T. Suzuki, \textit{Asymptotic analysis for two-dimensional elliptic eigenvalue problems with exponentially dominated nonlinearities}, Asymptotic Anal. \textbf{3} (1990) 173-188.

\bibitem {Ono} E. Onofri, \textit{On the positivity of the effective action in a theory of random surfaces}, Comm. Math. Phys. \textbf{86} (1982) 321-326.

\bibitem {Serrin}
J. Serrin, \textit{Local behavior of solutions of quasilinear equations}, Acta Math.  (1964) 247-302.

\bibitem {Struwe} M. Struwe, G. Tarantello, \textit{On multivortex solutions in Chern-Simons gauge theory}, Boll. Unione Math. Ital. Sez. B Artic. Ric. Mat. \textbf{8} (1998) 109-121.

\bibitem {Trudinger} N. S. Trudinger, \textit{On Harnack type inequalities and their application to quasilinear elliptic equations}, Comm. Pure Appl. Math. \textbf{20} (1967) 721-747.
\end{thebibliography}
\end{document}